\newcommand{\cacher}[1]{}
\newcommand{\tdef}[1]{\textcolor{blue}{\emph{#1}}}
\def\mm{\mathrm{m}}
\newcommand{\blossoming}{\mathcal{B}}
\newcommand{\blossomedge}{\mathcal{E}}
\newcommand{\Tam}{\mathrm{Tam}}
\newcommand{\tamint}{\mathcal{I}}
\newcommand{\bintree}{\mathcal{T}}
\newcommand{\noncross}{NC}
\def\cS{\mathcal{S}}
\def\cvS{\widehat{\cS}}
\def\vVec{\mathrm{Vec}}
\def\infposet{\raisebox{-1.2pt}{\includegraphics[scale=0.22]{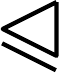}}}
\newcommand{\mdiag}{\mathcal{M\;\!\!D}}
\newcommand{\mtree}{\mathcal{M\;\!\!T}}
\newcommand{\pair}{\mathcal{X}}
\newcommand{\isep}{\mathrel{{.}\,{.}}\nobreak}
\newcommand{\can}{\operatorname{can}}
\newcommand{\mir}{\operatorname{mir}}
\newcommand{\rise}{\operatorname{rise}}
\newcommand{\dual}{\operatorname{dual}}
\newcommand{\bvec}{\mathbf{V}}
\newcommand{\dualbvec}{\bvec'} % or \hat{\bvec}
\newcommand{\bijbilblom}{\gamma}
\newcommand{\bijblombil}{\delta}
\newcommand{\inttoblom}{\Phi}
\newcommand{\blomtoint}{\Psi} % really needed? Used only once (commented out)
\newcommand{\dne}{d_{\nearrow}}
\newcommand{\dsw}{d_{\swarrow}}
\newcommand{\treestobil}{\phi}
\newcommand{\biltotrees}{\psi}
\newcommand{\cw}{cw}
\newcommand{\ccw}{ccw}
\newcommand{\canzz}{{0 \brack 0}}
\newcommand{\canoz}{{1 \brack 0}}
\newcommand{\canoo}{{1 \brack 1}}
\def\hG{\hat{G}}
\def\oB{\overline{B}}
\def\ni{\noindent}
\newtheorem{theo}{Theorem}[section]
\newtheorem{prop}[theo]{Proposition}
\newtheorem{lemma}[theo]{Lemma}
\newtheorem{coro}[theo]{Corollary}
\theoremstyle{definition}
\newtheorem{defn}[theo]{Definition}
\theoremstyle{remark}
\newtheorem{rem}[theo]{Remark}
\begin{document}

\title{Tamari intervals and blossoming trees}

\author[Wenjie Fang, \'Eric Fusy, and Philippe Nadeau]{Wenjie Fang$^{*}$, \'Eric Fusy$^{*}$ and Philippe Nadeau$^{\dagger}$}
\thanks{$^{*}$LIGM, CNRS, Univ. Gustave Eiffel, ESIEE Paris, F-77454 Marne-la-Vallée, France. \emph{Email address}: \texttt{\{eric.fusy,wenjie.fang\}@univ-eiffel.fr}\\
$^{\dagger}$ Univ Lyon, CNRS, Université Claude Bernard Lyon 1, Institut Camille Jordan, F69622 Villeurbanne Cedex, France. 
\emph{Email address}: \texttt{nadeau@math.univ-lyon1.fr} \\
}

\begin{abstract}
 We introduce a simple bijection between Tamari intervals and the blossoming trees (Poulalhon and Schaeffer, 2006) encoding planar triangulations, using a new meandering representation of such trees. Its specializations to the families of synchronized, Kreweras, new/modern, and infinitely modern intervals give a combinatorial proof of the counting formula for each family. Compared to (Bernardi and Bonichon, 2009), our bijection behaves well with the duality of Tamari intervals, also enabling the counting of self-dual intervals.
\end{abstract}

\maketitle

%\tableofcontents

\section{Introduction}

The Tamari lattice $\mathrm{Tam}_n$ is a well-known poset on Catalan objects of size $n$, that plays an important role in several domains, such as representation theory~\cite{bousquet2011number,Bergeron2012} and polyhedral combinatorics and Hopf algebra~\cite{bostan23,loday-ronco}. Motivated in part by such links, the enumeration of intervals in the Tamari lattice was first considered by Chapoton~\cite{chapoton06} who discovered the beautiful formula 
\begin{equation}\label{eq:tam}
I_n=\frac{2}{n(n+1)}\binom{4n+1}{n-1}
\end{equation}
for the number of intervals in $\mathrm{Tam}_n$. The subject has attracted much attention since then, with strikingly simple counting formulas found for several other families~\cite{bousquet2013representation,bousquet2011number,fang17}. 

As for combinatorial proofs, Bernardi and Bonichon~\cite{bernardi09} gave a bijection between Tamari intervals and planar (simple) triangulations via Schnyder woods. Then, a bijection by Poulalhon and Schaeffer~\cite{PS06} encodes the same triangulations by a class of blossoming trees, which yields~\eqref{eq:tam}. The bijection in~\cite{bernardi09} can be specialized to some subfamilies of Tamari intervals, such as Kreweras intervals~\cite{bernardi09} and synchronized Tamari intervals~\cite{fusy19}. Another strategy, for instance in~\cite{fang2018planar,fang2021bijective,fang17,pre12}, is to construct bijections between Tamari intervals and planar maps inspired by their recursive decompositions. With a similar approach, bijections between Tamari intervals and planar maps via certain branching polyominoes called fighting fish~\cite{duchi2017fighting} have been recently developed~\cite{DuHe22}. 

In this article, we present a more direct bijection denoted $\Phi$ between Tamari intervals and the blossoming trees from~\cite{PS06}. 
Our construction proceeds via certain arc-diagrams called \emph{meandering trees}. In~\Cref{sec:tamari} we show that Tamari intervals are in bijection with meandering trees, by applying simple local operations on a suitable planar representation of the pair of binary trees that form the interval. We also discuss the link between meandering trees and a tree-encoding of the interval-posets introduced by Ch\^atel and Pons~\cite{ChatelPons15}. In~\Cref{sec:blossoming} we consider the blossoming trees from~\cite{PS06} (in a bicolored version), and show that they are in bijection with meandering trees. A meandering tree directly yields a bicolored blossoming tree, by taking the unfolded version of the tree. Conversely, a bicolored blossoming tree can be turned into a meandering tree by a certain \emph{closure-mapping}, which is a variation of the closure-mapping in~\cite{PS06} that yields a rooted simple planar triangulation.

In~\Cref{sec:properties} we use our bijection to track several parameters on Tamari intervals, such as the number of entries in each of the 3 canopy-types, which yields a simple derivation of the associated trivarate generating function~\cite{fusy19} and of a recent bivariate counting formula for Tamari intervals~\cite{bostan23}. 
Due to its simplicity, our bijection is well-suited for specializations to known subfamilies of Tamari intervals, by characterizing the blossoming trees in each case (\Cref{sec:specialize}). In addition to synchronized intervals, whose specialization is much simpler than that in~\cite{fusy19}, and Kreweras intervals, already given in \cite{bernardi09}, our bijection also specializes to new/modern intervals~\cite{chapoton06,rognerud18} and infinitely modern intervals~\cite{rognerud18}. It allows us to recover the known counting formulas for these families (see~\Cref{table:self_dual}) in a uniform way, as done in~\Cref{sec:counting}. 
Compared to~\cite{bernardi09}, our bijection has also the advantage that it transfers the duality involution on Tamari intervals in a simple way, which amounts to a color-switch in blossoming trees (\Cref{lem:duality-commutation}). Self-dual intervals thus correspond to blossoming trees with a half-turn symmetry. By counting these trees, we obtain simple counting formulas for each family we consider (see~\Cref{table:self_dual}). These formulas are new to our knowledge, except for Kreweras intervals. 

The following statement summarizes our main results.
\begin{theo}
 The bijection $\inttoblom$ between intervals in $\Tam_n$ and bicolored blossoming trees of size $n$ sends self-dual intervals to blossoming trees with a half-turn symmetry. Its specializations to synchronized, Kreweras, modern/new, and infinitely modern intervals yield combinatorial proofs
of counting formulas for intervals and self-dual intervals in each case, see~\Cref{table:self_dual}.
\end{theo}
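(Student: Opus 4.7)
My plan is to build $\Phi$ as the composition of two bijections already signaled in the introduction: first from Tamari intervals to meandering trees (\Cref{sec:tamari}), and then from meandering trees to bicolored blossoming trees (\Cref{sec:blossoming}). Starting from an interval $[T_1,T_2]$ in $\Tam_n$, I would draw the two binary trees $T_1$ and $T_2$ simultaneously in a common planar representation, and then apply the local flip/unfolding rules to extract a meandering tree, an arc-diagram in which the two nested families of arcs record the cover relations of the Ch\^atel--Pons interval-poset. The inverse reads off the two trees from the two arc-families. For the second step, the unfolding of a meandering tree directly produces a bicolored blossoming tree, while a variant of the Poulalhon--Schaeffer closure (tailored to the bicolored setting) recovers the meandering tree from any bicolored blossoming tree. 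The verification that unfolding and closure are mutually inverse is a local, step-by-step check.

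The second claim, that $\Phi$ maps self-dual intervals to blossoming trees with a half-turn symmetry, follows from \Cref{lem:duality-commutation}, which states that Tamari duality corresponds under $\Phi$ to swapping the two colors in the blossoming tree. Combining this color-switch with the natural re-rooting around the central edge yields the half-turn symmetry on the planar embedding; conversely, any half-turn symmetric bicolored blossoming tree has its colors exchanged under the half-turn, so its pre-image is self-dual.

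For the third claim I would treat each subfamily by characterizing its image through the chain Tamari $\to$ meandering $\to$ blossoming. Synchronized intervals translate into a simple parity/canopy condition readable directly from the meandering tree, which on the blossoming side becomes a degree condition; the enumeration then falls out of the trivariate canopy generating function obtained in \Cref{sec:properties}. Kreweras intervals impose an arc-avoidance condition that becomes a ternary-tree structure on the blossoming side, matching the bijection in \cite{bernardi09}. Modern/new and infinitely modern intervals correspond to forbidden local patterns in the meandering tree that carry over to forbidden subtrees on the blossoming side, at which point a straightforward decomposition yields the counting formulas of \Cref{table:self_dual}. For the self-dual count in each family, I would combine the characterization with the half-turn symmetry from the second claim, so that enumerating self-dual intervals reduces to enumerating a "half" of the blossoming tree.

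The main obstacle I expect is not the bijection itself, whose construction is largely a matter of disentangling the meandering-tree intermediate object, but the specialization step: finding the right characterization of the blossoming image for the modern and infinitely modern families, whose definitions are intrinsic to the Tamari order and have no evident counterpart on the tree side. Once these characterizations are identified, the enumerations and their self-dual refinements are uniform decomposition arguments, producing the formulas summarized in \Cref{table:self_dual}.
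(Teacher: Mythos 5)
Your overall plan matches the paper's architecture exactly: $\Phi = \gamma \circ \phi$ passes through meandering trees, duality of intervals becomes color-switch on blossoming trees via \Cref{lem:duality-commutation}, each subfamily is characterized by a forbidden pattern on the blossoming side, and the counting formulas then follow from tree decompositions. So there is no conceptual divergence from the paper's route.

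That said, two steps in your sketch stay at a level where the substance of the proof is not actually supplied. First, your justification of the half-turn symmetry is not right as stated. You write of ``combining this color-switch with the natural re-rooting around the central edge,'' but blossoming trees are unrooted and there is not always a central edge (the center of a half-turn can be a node or an edge, depending on the parity of $n$). The actual argument is shorter and cleaner: by \Cref{lem:duality-commutation}, $I$ self-dual means $\dual(\Phi(I)) = \Phi(I)$; by the remark following \Cref{defn:blossoming-tree}, an uncolored blossoming tree carries exactly two bicolorations, exchanged by color-switch, and they coincide if and only if the underlying plane tree admits a half-turn symmetry. Hence $\dual(B)=B$ is equivalent to the underlying tree having a half-turn symmetry (which then necessarily exchanges the two colors). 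Second, you correctly diagnose that the hard part is characterizing the blossoming image of the modern and infinitely modern families, but you stop at ``forbidden local patterns that carry over'' without identifying them. The paper does this by showing (via \Cref{lem:flawed_canonical} and an analysis of the rise operator on smooth drawings) that a 1-gap, respectively an arbitrary gap, in the meandering tree corresponds to a ``non-modern edge,'' respectively a ``non-modern path,'' in the blossoming tree (\Cref{lem:modern}, \Cref{lem:infinitely-modern}); and similarly characterizes Kreweras intervals via non-Kreweras pairs and paths. Those lemmas, together with the generating-function decompositions of \Cref{sec:counting} (including the subtle quotient-tree analysis for each family when enumerating half-turn-symmetric blossoming trees), are what actually produce the entries of \Cref{table:self_dual}, and your proposal defers all of it.
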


Finally, we conclude in~\Cref{sec:final} with remarks and observations related to our new bijection. In particular, we note that, besides color switch, another natural involution on blossoming trees is to apply a reflection. This yields a new involution on Tamari intervals with interesting properties, see~\Cref{sec:invol}.

\section{Tamari intervals and their meandering representation}
\label{sec:tamari}

\subsection{Tamari lattice and intervals}

We first recall the definition of the Tamari lattice, formulated here on binary trees, which are either a single leaf, denoted by $\epsilon$, or a binary node with two sub-trees $T_L, T_R$, denoted by $(T_L, T_R)$. The size of a binary tree $T$ is the number of its binary nodes. We denote by $\bintree_n$ the set of binary trees of size $n$. For $u$ a node in $T$, we denote by $T_u$ the sub-tree of $T$ rooted at $u$.

\begin{defn} \label{defn:tamari-binary-tree}
For $T,T'$ two elements in $\bintree_n$, write $T \prec T'$ if there is a node $u$ in $T$ such that $T_u$ has the form $((T_A, T_B), T_C)$, and $T'$ is obtained from $T$ by replacing $T_u$ by $(T_A, (T_B, T_C))$, the replacement operation being called right rotation. The \tdef{Tamari lattice} $\Tam_n = (\bintree_n, \leq)$ is defined as the reflexive and transitive closure of this relation. 

 A \tdef{Tamari interval} of size $n$ is a pair $(T,T')$ such that $T\leq T'$ in $\Tam_n$. The set of Tamari intervals of size $n$ is denoted $\tamint_n$.
\end{defn}

It is not immediately clear that the partially ordered set $\Tam_n$ is actually a lattice, see~\cite{huang-tamari} for a proof.

\begin{figure}[!ht]
\begin{center}
\includegraphics[width=\textwidth]{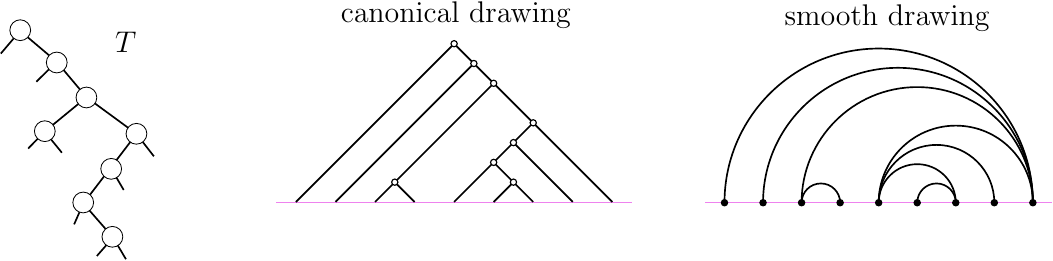}
\end{center}
\caption{A binary tree $T$ with its canonical drawing and smooth drawing.}
\label{fig:drawings}
\end{figure}

\subsection{Some representations and encodings of binary trees}\label{sec:catalan}
Throughout the article we will use the notation $[a\isep b] := \{a, a+1, \ldots, b\}$ and $[n] := [1\isep n]$. 
For $T\in\bintree_n$, the \tdef{canonical drawing} of $T$ is the crossing-free drawing of $T$ where its $n+1$ leaves from left to right are placed at the points of abscissas $0,\ldots,n$ on the $x$-axis, its nodes are in the upper half-plane, and its edges from a node to the left (resp. right) child are segments of slope $+1$ (resp. $-1$). For every node $v$ of $T$, the \tdef{wedge} of $v$ is the concatenation of the segment of slope $+1$ from $v$ to the leftmost leaf in the subtree $T_v$ rooted at $v$ and of the segment of slope $-1$ from $v$ to the rightmost leaf in $T_v$. The \tdef{smooth drawing} of $T$ is obtained by replacing every node and associated wedge by a semi-circle (in the upper half-plane) connecting the two incident leaves, see~\Cref{fig:drawings}. By construction we have the following characterization:

\begin{lemma}\label{lem:smooth}
 Smooth drawings of binary trees of size $n$ are planar arc-diagrams on integer-points of abscissa from $0$ to $n$ on the horizontal line, with all arcs in the upper half-plane,  characterized by the following properties:
 \begin{itemize}
 \item For $t \in [n]$, the unit-segment $[t-1, t]$ is below an arc. We denote by $a_t$ the unique arc covering $[t-1, t]$ and visible from it (i.e., the deepest one);
 \item Let $x_{\ell}$ be the abscissa of the left end of $a_t$. If $x_{\ell} < t - 1$, then there is an arc from the point at $x_{\ell}$ to the point at $t - 1$;
 \item Let $x_{r}$ be the abscissa of the right end of $a_t$. Similarly, if $t < x_r$ then there is an arc from the point at $t$ to the point at $x_r$. 
 \end{itemize}
\end{lemma}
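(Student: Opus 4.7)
The plan is to prove the two directions of the characterization: (a) smooth drawings of binary trees satisfy the three properties, and (b) any arc-diagram on $\{0,\ldots,n\}$ satisfying them is the smooth drawing of a unique binary tree of size $n$.

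For (a), the arcs of the smooth drawing of $T$ are indexed by the binary nodes of $T$, the arc of $v$ running from $\ell_v$ to $r_v$, the abscissas of the leftmost and rightmost leaves of $T_v$. For each $t \in [n]$, let $v_t$ denote the lowest common ancestor of the leaves at positions $t-1$ and $t$; since these are consecutive leaves, $v_t$ is characterized by the fact that leaf $t-1$ is the rightmost leaf of its left subtree and leaf $t$ the leftmost leaf of its right subtree. The arc of $v_t$ covers $[t-1,t]$, and any other arc covering this segment comes from a common ancestor of leaves $t-1$ and $t$; such common ancestors form a chain going up from $v_t$ to the root, and their arcs are strictly nested above that of $v_t$. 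So $a_t$ is the arc of $v_t$, giving property (1). If $x_\ell = \ell_{v_t} < t-1$, the left subtree of $v_t$ has at least two leaves, so its root is a binary node whose arc runs from $\ell_{v_t}$ to $t-1$, giving (2); property (3) is symmetric.

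For (b), I proceed by induction on $n$. The base case $n=0$ is trivial. For the inductive step, I identify the arc $A$ of $D$ that plays the role of the root --- the outermost arc spanning $[0,n]$ --- and read off the unique index $m \in [n]$ with $a_m = A$, which serves as the splitting point separating the leaves of the prospective left and right subtrees. Then $D$ decomposes as $A$ together with a sub-diagram on $\{0,\ldots,m-1\}$ and a translated sub-diagram on $\{0,\ldots,n-m\}$, each of which inherits properties (1)--(3) (with indexing shifted on the right part). By induction each corresponds to a binary tree, and gluing them as left and right subtrees under a new root whose arc is $A$ yields a binary tree $T$; comparing with part (a), the smooth drawing of $T$ is exactly $D$, which also gives uniqueness.

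The main obstacle is the first step of the recursion: pinning down the outer arc and the splitting index $m$. Properties (2) and (3) only ever introduce arcs nested below $a_t$, sharing an endpoint with $a_t$, so they do not directly produce arcs of larger span; consequently, the existence of the outer arc spanning $[0,n]$ has to be argued globally, combining planarity (non-crossing) with the forced coverage from (1) and the interplay between (2) and (3) that prevents the same arc from being $a_t$ for two different values of $t$. Once this structural step is in place, the recursion reduces to a straightforward verification that the restricted sub-diagrams again satisfy (1)--(3) on their supports, completing the induction.
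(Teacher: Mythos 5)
The paper offers no argument here at all --- the lemma is introduced with ``By construction we have the following characterization,'' so the authors treat it as immediate from the definition of the smooth drawing (and only the forward direction is really needed downstream, via diagram-drawings). You supply both directions. Your direction (a), showing that smooth drawings satisfy (1)--(3) by identifying $a_t$ with the arc of the lowest common ancestor $v_t$ of the leaves at $t-1$ and $t$ and noting that all covering arcs come from ancestors of $v_t$, is correct and carefully done.

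Direction (b) is where the real issue lies, and you flag it yourself: the induction hinges on the existence of an arc spanning $[0,n]$, and you leave this as an unresolved ``main obstacle.'' This is not merely a missing routine verification --- the outer arc genuinely need not exist under (1)--(3) alone. For $n=2$, the diagram with the two arcs $[0,1]$ and $[1,2]$ is non-crossing, has $a_1=[0,1]$ and $a_2=[1,2]$, and makes (2) and (3) vacuous (since each $a_t$ has endpoints exactly $t-1$ and $t$), so it satisfies all three conditions; yet it is not the smooth drawing of either binary tree of size $2$ (those are $\{[0,2],[0,1]\}$ and $\{[0,2],[1,2]\}$). The ingredients you propose (planarity together with the injectivity of $t\mapsto a_t$, which you argue correctly via property~(3)) do not rule out this configuration. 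What is missing is an extra structural hypothesis, e.g.\ that the diagram has a unique outermost arc --- equivalently that it is an \emph{alternating} layout of a plane tree, as in Remark~\ref{rem:one-to-one} and the subsequent remark, a condition the path layout $\{[0,1],[1,2]\}$ violates at the middle vertex. Once such a hypothesis is granted, your recursive decomposition by the outer arc and the splitting index $m$ does go through, but as written the proof of direction~(b) cannot be completed from conditions (1)--(3) alone.
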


\begin{rem}\label{rem:one-to-one}
 The mapping from $[t-1,t]$ to $a_t$ in~\Cref{lem:smooth} clearly gives a 1-to-1 correspondence between the $n$ unit-segments and the $n$ arcs.
\end{rem}
\begin{rem}
 A smooth drawing of a binary tree also corresponds to an \emph{alternating layout} of a plane tree with $n$ edges, where \emph{alternating} means that all neighbours of a vertex are on the same side, either all to the left or all to the right.
\end{rem}

\begin{figure}[!ht]
\begin{center}
\includegraphics[width=\textwidth]{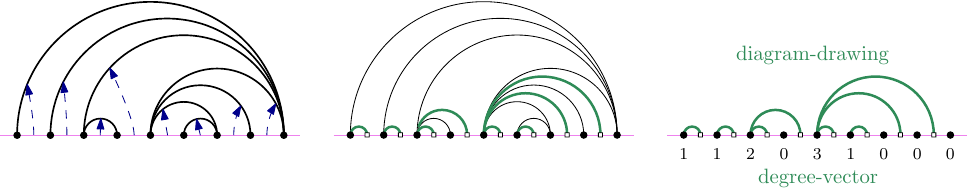}
\end{center}
\caption{Construction of the diagram-drawing (with the degree-vector indicated below) from the smooth drawing.}
\label{fig:diagramdrawing}
\end{figure}

\begin{defn} \label{defn:tree-diagram-drawing}
 A \tdef{diagram-drawing} of size $n$ is a non-crossing arc-diagram with $2n + 1$ points at abscissas $0, 1/2, 1, \ldots, n - 1/2, n$ on the $x$-axis, with the $n + 1$ integral points colored black, and the others white, such that all arcs are in the upper half-plane and have a black point as left  end and a white point as right end, with each white point incident to a single arc.

 For $T\in\bintree_n$, the \tdef{diagram-drawing} $\hat{T}$ of $T$ is the diagram-drawing obtained from the smooth drawing of $T$ by converting each arc $a_t$, as defined in~\Cref{lem:smooth}, into an arc from the white point at $t - 1/2$ to the black point at the left end of $a_t$, see~\Cref{fig:diagramdrawing}.
\end{defn}

\begin{prop}
The mapping $T\to\hat{T}$ is a bijection from $\bintree_n$ to diagram-drawings of size $n$.
\end{prop}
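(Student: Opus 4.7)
The plan is to first verify that $T \mapsto \hat{T}$ is well-defined (i.e., produces a valid diagram-drawing), and then to establish bijectivity by induction on $n$ via the binary decomposition of trees.

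For well-definedness, by construction the map produces exactly one arc $(x_\ell(t),\, t-1/2)$ per $t \in [n]$, so each white point has a unique incident arc whose left end is black. To check non-crossing, suppose two diagram arcs with parameters $s < t$ were to interlace. Then $x_\ell(s) < x_\ell(t) \le s-1$, which forces the smooth arc $a_t$ to cover $[s-1, s]$ and strictly extend $a_s$ on the left; since $a_s$ is the deepest arc above $[s-1, s]$ by \Cref{lem:smooth}, this is a contradiction.

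For bijectivity, I would argue by induction on $n$; the base case $n = 0$ is immediate. For $n \ge 1$, I exploit the decomposition $T = (T_L, T_R)$ with $|T_L| = k$: the root's wedge is the outermost smooth arc $(0, n)$, which equals $a_{k+1}$ and produces the diagram arc $(0,\, k + 1/2)$; the arcs $a_t$ for $t \in [1, k]$ are wedges of $T_L$-nodes (with endpoints in $[0, k]$), and those for $t \in [k+2, n]$ are wedges of $T_R$-nodes (with endpoints in $[k+1, n]$). Hence $\hat{T}$ canonically splits into $\hat{T_L}$, the arc $(0,\, k+1/2)$, and a translated copy of $\hat{T_R}$. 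For the inverse direction, given a diagram-drawing $D$ of size $n \ge 1$, the white point $1/2$ must connect to $0$, so some arc emanates from black point $0$; define $(0,\, k+1/2)$ to be the arc at $0$ with the largest white endpoint. The non-crossing condition combined with the unique-incidence at each white point then forces every other arc to have both endpoints in $\{0, 1/2, \dots, k\}$ or in $\{k+1, \dots, n\}$, yielding valid sub-diagram-drawings $D_L$ and $D_R$ of sizes $k$ and $n - 1 - k$. By induction $D_L = \hat{T_L}$ and $D_R = \hat{T_R}$ for unique $T_L, T_R$, and then $T := (T_L, T_R)$ satisfies $\hat{T} = D$.

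The main obstacle is the careful verification of the decomposition step, ensuring in particular that the combinatorially defined ``root arc'' of an arbitrary diagram-drawing (the arc at $0$ with the largest right endpoint) always corresponds to the root's wedge under the map, and that its removal cleanly splits the diagram into two valid sub-diagram-drawings. Both points rely on the interplay between the non-crossing condition and the unique incidence at each white point, and on the fact that the image construction never produces an arc from $0$ further right than the image of the root wedge.
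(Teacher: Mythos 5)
Your proof is correct but takes a genuinely different route from the paper. The paper constructs the inverse map \emph{directly}: for each white point $w$ in a diagram-drawing it defines the \emph{right-attachment point} of $w$ (the rightmost black point reachable from $w$ in the upper half-plane without crossing an arc), and sends each diagram arc $b \to w$ to the smooth arc from $b$ to the right-attachment point of $w$; it then declares the verification that this yields a valid smooth drawing and inverts $T\mapsto\hat T$ to be routine. You instead argue by induction on $n$, identifying the root wedge with the arc at black point $0$ having the largest white endpoint, and splitting both the forward image and an arbitrary diagram-drawing into left and right sub-diagrams. Your argument is more self-contained (every step is checked), while the paper's is shorter and more structural — and its right-attachment-point construction is reused implicitly elsewhere (e.g.\ in defining $\biltotrees$ in~\Cref{sec:proofphi}), which is part of why the paper phrases the inverse that way. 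One minor wording slip: in your non-crossing check you derive $x_\ell(s) < x_\ell(t) \le s-1$, so $a_t$ has its left end strictly to the \emph{right} of $a_s$'s, which (together with both arcs covering $[s-1,s]$ and being non-crossing) forces $a_t$ to be nested strictly \emph{inside} $a_s$, contradicting that $a_s$ is the deepest; the phrase ``strictly extend $a_s$ on the left'' says the opposite of what you mean, though the conclusion you draw is correct.
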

\begin{proof}
The mapping to recover the smooth drawing of $T$ from its diagram-drawing $\hat{T}$ is as follows: for each white point $w$ of $\hat{T}$, define the \tdef{right-attachment point} of $w$ as the rightmost black point $b$ that can be reached from $w$ by traveling in the upper half-plane without crossing an arc, i.e., $b$ is the black point at $x=n$ if there is no arc above $w$, and if $w$ is covered by an arc $b'\to w'$ then $b$ is the black point to the left of $w'$. Then, for each arc $b\to w$ in $\hat{T}$, the corresponding arc in the smooth drawing of $T$ connects $b$ to the right-attachment point of $w$. It is easy to check that, starting from any diagram-drawing, this mapping yields a valid smooth drawing (satisfying the conditions of~\Cref{lem:smooth}), and that it is the inverse of the mapping $T\to\hat{T}$.
\end{proof}

\begin{defn}
A \tdef{degree-vector} of size $n$ is a vector $(d_0,\ldots,d_{n})\in\mathbb{N}^{n+1}$ satisfying $\sum_{j=0}^{n} d_j =n$, and $\sum_{j=0}^i d_j >i$ for each $i\in[0\isep n-1]$.  
Equivalently, the sequence of steps $(d_i-1)_{0\leq i\leq n}$ gives a Łukasiewicz walk, whose definition can be found in e.g.~\cite[Section~I.5]{flajolet}.
\end{defn}

For $T \in \bintree_n$, the \tdef{degree-vector} of $T$, denoted by $\dne(T)$, is the vector $(d_0,\ldots,d_{n})$ such that  $d_i$ is the number of arcs incident to the black point at $x = i$ in the diagram-drawing of $T$, for each $i\in [0\isep n]$. Equivalently, by the definition of smooth drawing of binary trees, $\dne(T)_i$ is the right-degree of the black vertex at $x = i$ in the smooth drawing of $T$, and is the number of nodes on the maximal left branch of $T$ ending at the leaf at $x = i$ (thus $0$ for a right leaf); see~\Cref{fig:diagramdrawing}, right. This correspondence is a classical bijection between degree-vectors of size $n$ and $\bintree_n$. More precisely, the left-sibling-right-child bijection converts a binary tree into a rooted plane tree, in which nodes on a maximal left branch become children of the same node. Then the Łukasiewicz walk of the rooted plane tree is the sequence formed by arity of each node minus 1, with the nodes ordered by left-to-right DFS traversal.

\begin{figure}[!ht]
\begin{center}
\includegraphics[width=0.86\textwidth]{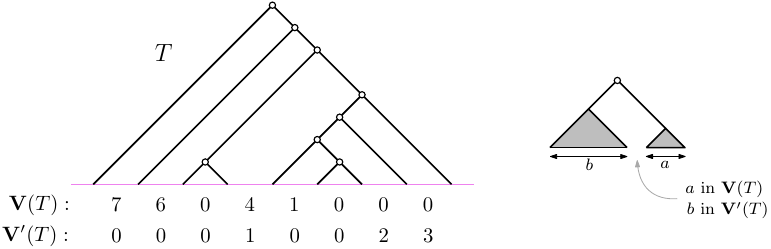}
\end{center}
\caption{A binary tree $T$, its bracket-vector $\bvec(T)$ and dual bracket-vector $\dualbvec(T)$.}
\label{fig:bracket}
\end{figure}

Finally, we recall the bracket-vector and dual bracket-vector\footnote{Bracket-vectors, and similarly dual bracket-vectors, 
are specified by inequality constraints which we do not reproduce here, see~\cite{huang-tamari}.} encoding of a binary tree $T\in\bintree_n$. 
We label nodes of $T$ from $1$ to $n$ by infix order, with $v_i$ the node of label $i$. Let $a_i$ (resp. $b_i)$ be the size of the right (resp. left) subtree of $v_i$. The \tdef{bracket-vector} of $T$ is defined as $\bvec(T) = (a_1, \ldots, a_n)$, and the \tdef{dual bracket-vector} of $T$ as $\dualbvec(T) = (b_1, \ldots, b_n)$. See~\Cref{fig:bracket} for an illustration. The bracket-vector encoding is convenient to characterize Tamari intervals. For $T, T'\in\bintree_n$, it is known~\cite{huang-tamari} that $(T,T')\in\tamint_n$ if and only if $\bvec(T)_i \leq \bvec(T')_i$ for all $i$, or equivalently if and only if $\dualbvec(T)_i \geq \dualbvec(T')_i$ for all $i$. 

\begin{rem}\label{rem:dualbracket}
The dual bracket-vector is closely related to the diagram-drawing: for $T \in \bintree_n$ and for $t \in [n]$, the unique arc in $\hat{T}$ incident to the white point at $x=t - 1/2$ is connected to the black point at $x = t - 1 - b_t$.
\end{rem}

\subsection{From pairs of binary trees to meandering diagrams/trees}\label{sec:phi}

\begin{figure}[!ht]
\begin{center}
\includegraphics[width=\textwidth]{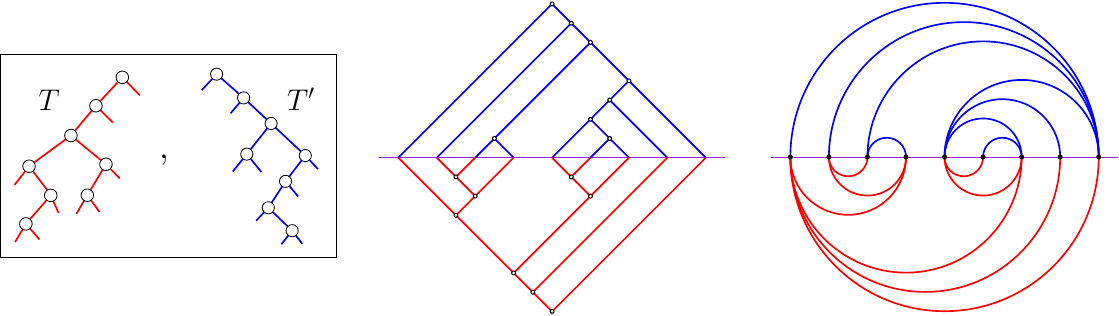}
\end{center}
\caption{A pair $(T,T')$ of binary trees of same size, its canonical drawing, and its smooth drawing.}
\label{fig:drawingpair}
\end{figure}

The \tdef{mirror} of a binary tree $T$, denoted by $\mir(T)$, is the mirror image of $T$ exchanging left and right. The \tdef{mirror canonical drawing} (resp. \tdef{mirror smooth drawing}) of $T$ is the canonical drawing (resp. smooth drawing) of $\mir(T)$ rotated by a half-turn, which preserves the left-to-right order of leaves of $T$. Equivalently, the mirror canonical (resp. smooth) drawing is the image of the canonical (resp. smooth) drawing of $T$ by the mirror exchanging up and down. 

Let $\pair_n:=\bintree_n\times \bintree_n$. For $X = (T,T') \in \pair_n$, the \tdef{canonical drawing} (resp. \tdef{smooth drawing}) of $X$ is the superimposition of the canonical (resp. smooth) drawing of $T'$ with the mirror canonical (resp. smooth) drawing of $T$, see~\Cref{fig:drawingpair}. In this case, the \tdef{upper diagram-drawing} of $X$ is the diagram-drawing of $T'$, while the \tdef{lower diagram-drawing} of $X$ is the diagram-drawing of $\mir(T)$ rotated by a half-turn. 
The \tdef{diagram-drawing} of $X$ is the superimposition of the upper and lower diagram-drawing of $X$. As a convention, in each of the 3 representations of $X$, the arcs are blue (resp. red) in the upper (resp. lower) part. Let \tdef{$\treestobil$} be the mapping that sends $X\in\pair_n$ to its diagram-drawing;  see~\Cref{fig:local_op_phi} for the local construction, and~\Cref{fig:examples_phi} for two examples. 

\begin{defn}\label{defn:trees-to-bilabel}
 A \tdef{meandering diagram} $M$ of size $n$ is a non-crossing arc-diagram  with $2n + 1$ points, at $0, \tfrac{1}{2}, 1, \ldots, n-\tfrac{1}{2}, n$ on the $x$-axis, colored black for integral points and white for half-integral ones, with all upper (resp. lower) arcs having a black (resp. white) left end and a white (resp. black) right end. 
 
 The \tdef{underlying graph} of $M$ is the graph with black points as vertices, and edges indexed by white points, relating the black endpoints of its incident upper and lower arcs. A \tdef{meandering tree} is a meandering diagram whose underlying graph is a tree. Let $\mdiag_n$ (resp. $\mtree_n$) be the set of meandering diagrams (resp. meandering trees) of size $n$.
\end{defn}

\begin{figure}
\begin{center}
\includegraphics[width=0.8\textwidth]{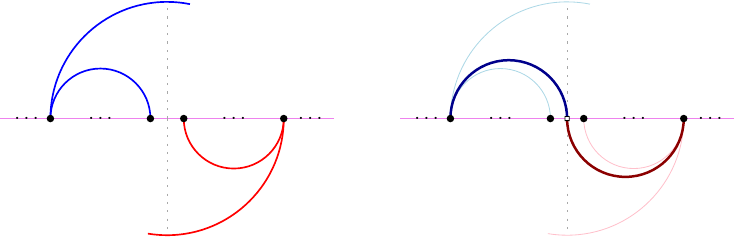}
\end{center}
\caption{The operation of $\treestobil$ on the smooth drawing of a pair $(T,T') \in \pair_n$, performed at each segment between consecutive black points on the horizontal axis. The smaller blue arc and red arc in the left figure may be reduced to a point.}
\label{fig:local_op_phi}
\end{figure}

\begin{figure}
\begin{center}
\includegraphics[width=\linewidth]{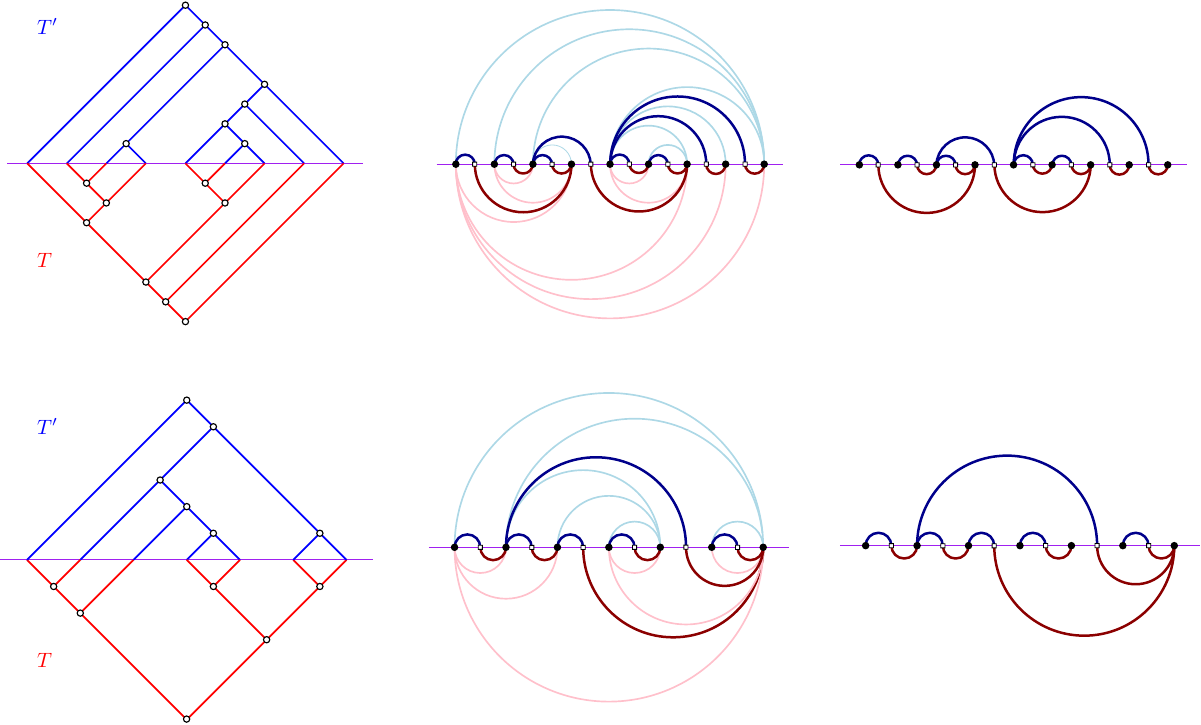}
\end{center}
\caption{The top (resp. bottom) row shows a pair $X=(T,T')\in \pair_n$ that is (resp. is not) in $\tamint_n$, and the corresponding meandering diagram $\phi(X)$ that is (resp. is not) a meandering tree.}
\label{fig:examples_phi}
\end{figure}

\begin{prop}\label{prop:phi}
For $n \geq 1$, the mapping $\treestobil$ is a bijection between $\pair_n$ and $\mdiag_n$. It specializes to a bijection between $\tamint_n$ and $\mtree_n$. 
\end{prop}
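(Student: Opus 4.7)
The plan is to prove the two statements separately, starting with the bijection $\pair_n \leftrightarrow \mdiag_n$ and then restricting to Tamari intervals.

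For the first statement, I would argue directly from the construction: $\phi(T,T')$ decomposes into its blue upper arcs (the diagram-drawing $\hat{T'}$) and its red lower arcs (the half-turn rotation of $\widehat{\mir(T)}$). Any $M \in \mdiag_n$ decomposes the same way, and by the bijection $T \mapsto \hat{T}$ from binary trees to diagram-drawings, the upper part of $M$ determines a unique $T' \in \bintree_n$ and the lower part (after undoing the rotation) determines $\mir(T)$, hence $T$. These recoveries are independent, so $\phi$ is bijective.

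For the restriction, I would first describe the edges of the underlying graph $G$ of $\phi(T,T')$. By~\Cref{rem:dualbracket} applied to $T'$, the upper arc at the white point $x = t - \tfrac{1}{2}$ has black left endpoint at $L_t = t - 1 - \dualbvec(T')_t$. Applying the same remark to $\mir(T)$ and then undoing the half-turn rotation, the lower arc at that white point has black right endpoint at $R_t = t + \bvec(T)_t$. Hence the edge of $G$ indexed by this white is $(L_t, R_t)$. Since $G$ has $n+1$ vertices and $n$ edges, being a tree is equivalent to being connected, equivalently acyclic. Using the bracket-vector characterization of Tamari intervals recalled at the end of~\Cref{sec:catalan}, the goal reduces to the equivalence
\[
 G \text{ is a tree} \;\Longleftrightarrow\; \bvec(T)_t \leq \bvec(T')_t \text{ for every } t \in [n].
\]

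To prove this equivalence, I plan a two-sided argument. For ``$\Leftarrow$'', I would induct on $n$: assuming the bracket-vector inequality, I would locate a black leaf of $G$ whose unique incident edge (an upper and lower arc sharing one white point) can be peeled off, leaving a meandering diagram corresponding to a Tamari interval of size $n-1$, and conclude by induction. For ``$\Rightarrow$'', I would argue contrapositively: if $\bvec(T)_t > \bvec(T')_t$ at some index, then the nearby upper and lower arcs close into a cycle in $G$, as already illustrated by the non-tree example in the bottom row of~\Cref{fig:examples_phi}. Structurally, the planar meandering diagram has $2n+1$ vertices and $2n$ non-crossing arcs, and its number of connected components equals $1$ plus the number of its bounded faces; so $G$ fails to be a tree precisely when a bounded face is forced by the bracket-vector violation. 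The main technical step will be to explicitly exhibit this cycle (or, in the inductive direction, the peelable leaf) from the bracket-vector inequality at the smallest offending index, leveraging the non-crossing constraints that control the interaction between upper and lower arcs.
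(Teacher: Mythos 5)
Your first part is the same as the paper's: $\phi$ is bijective from $\pair_n$ to $\mdiag_n$ because the upper and lower halves of the diagram are independent encodings of $T'$ and $\mir(T)$ via the diagram-drawing bijection. The paper's proof of the restriction, however, does not work directly with the bracket-vector inequality. It introduces a \emph{flawed pair} as a pattern of two arcs, proves (\Cref{lem:flawed_canonical}) that $(T,T')$ is a Tamari interval iff the smooth drawing of the pair has no flawed pair, proves (\Cref{lem:flawed_meandering}) that a meandering diagram is a tree iff it has no flawed pair, and proves (\Cref{lem:flawed_transfer}) that $\phi$ takes flawed pairs to flawed pairs. You propose instead to go directly from ``$\bvec(T)_t\le\bvec(T')_t$ for all $t$'' to ``$G$ is a tree'' via a leaf-peeling induction and a direct cycle-exhibition; this is a genuinely different route, and the Euler-characteristic observation (bounded faces of $M$ count $C-1$) is a nice alternative framing.

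The gap is that the hard technical content is exactly where you say ``the main technical step will be.'' Two issues are worth naming. First, your coordinates don't quite line up: from~\Cref{rem:dualbracket}, the \emph{upper} arcs of $\phi(T,T')$ are read off from $\dualbvec(T')$, not $\bvec(T')$, whereas the characterization you invoke is $\bvec(T)_t\le\bvec(T')_t$. To relate a violation of that inequality to the visible arc structure you either need to pass to the dual characterization $\dualbvec(T)_t\ge\dualbvec(T')_t$ (and then the \emph{lower} arcs are in the wrong coordinate) or you need a mixed criterion relating a lower arc of $T$ to an upper arc of $T'$; that mixed criterion is precisely what the paper's flawed-pair notion supplies, and deriving it is nontrivial enough that the paper devotes \Cref{lem:flawed_canonical} to it. Second, the leaf-peeling induction is not obviously well-founded: the black endpoints $0$ and $n$ are the natural candidates (black $0$ has no lower arcs, black $n$ no upper arcs), but neither is forced to have degree~$1$ in $G$ (e.g.\ $\dne(T')_0>1$ whenever the left spine of $T'$ has more than one node), and removing an interior black point breaks the integer/half-integer positioning, so some relabeling argument is needed to stay inside $\mdiag_{n-1}$, and one must then check that the resulting pair of trees still satisfies the bracket inequalities. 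None of this is impossible, but as written neither direction of the equivalence is proved, and I'd expect a complete write-up to end up rediscovering something equivalent to the flawed-pair machinery.
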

The proof is given next in Section~\ref{sec:proofphi}. 

\begin{rem}
By~\Cref{rem:dualbracket}, the mapping $\phi$ also has a simple formulation in terms of bracket-vector and dual bracket-vector. For $X=(T, T') \in \pair_n$, with $\bvec(T) = (a_1, \ldots, a_n)$, and $\dualbvec(T') = (b_1, \ldots, b_n)$, $\treestobil(X)$ is given by its lower arcs $(t - \tfrac{1}{2}, t+a_t)$ and upper arcs $(t - \tfrac{1}{2}, t-b_t-1)$ for all $t \in [n]$.
\end{rem}

\subsection{Proof of Proposition~\ref{prop:phi}}\label{sec:proofphi}

The inverse $\biltotrees$ of the mapping $\treestobil$ relies on the equivalence between the representations of Catalan structures discussed in~\Cref{sec:catalan}. For $M \in \mdiag_n$, we consider the diagram-drawing made by the upper half-plane part, from which we compute (via right-attachment points) the corresponding smooth drawing, and turn it into the canonical drawing of a binary tree $T'\in\bintree_n$. We do the same for the half-turn of the lower diagram-drawing, yielding a binary tree of size $n$, whose mirror is denoted $T$. Then $\biltotrees$ is the mapping associating $(T,T')$ to $M$. 

It remains to show the specialization statement in Proposition~\ref{prop:phi}. Our proof relies on a forbidden pattern characterization of Tamari intervals and of meandering trees, 
and on the fact that the forbidden patterns are in correspondence via $\phi$.

For $(T, T') \in \pair_n$, in its canonical drawing, a \tdef{flawed pair} is a pair $v, v'$ of nodes respectively in $T$ and $T'$ such that, for $x_\ell$ and $x_r$ (resp. $x_{\ell}'$ and $x_r'$) the abscissas of the leftmost and the rightmost leaf in $T_v$ (resp. $T'_{v'}$), we have $x_{\ell}' < x_{\ell} \leq x_r' < x_r$. In the smooth drawing of $(T, T')$, a \tdef{flawed pair} is a pair made of a lower arc $a$ and an upper arc $a'$ such that, for $x_{\ell}$ and $x_r$ (resp. $x_{\ell}'$ and $x_r'$) the abscissas of the left and the right end of the lower (resp. upper) arc, we have $x_{\ell}'<x_{\ell}\leq x_r'<x_r$. Obviously a flawed pair in the canonical drawing gives a flawed pair in the smooth drawing and vice versa.
 
\begin{lemma}\label{lem:flawed_canonical}
 A pair $(T,T')\in\pair_n$ is in $\tamint_n$ if and only if it has no flawed pair in its canonical (equivalently, smooth) drawing. 
\end{lemma}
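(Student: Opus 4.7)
The plan is to reduce everything to the bracket-vector characterization: $(T,T')\in\tamint_n$ iff $\dualbvec(T)_i\ge\dualbvec(T')_i$ for all $i\in[n]$. Write $b_i:=\dualbvec(T)_i$, $b'_i:=\dualbvec(T')_i$, $a_i:=\bvec(T)_i$, $a'_i:=\bvec(T')_i$, and use the coordinates $x_\ell(v_i)=i-b_i-1$, $x_r(v_i)=i+a_i$ (and analogously for $v'_i$).

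For the ``if'' direction I would argue by contrapositive: suppose $b_i<b'_i$ for some $i$. Then $b'_i\ge 1$, so $v'_i$ has a non-trivial left child $v'_\ell$ in $T'$, and the left subtree of $v'_i$ occupies the leaves $[i-b'_i-1\isep i-1]$, giving $x_\ell(v'_\ell)=i-b'_i-1$ and $x_r(v'_\ell)=i-1$. A direct check then shows that $(v_i,v'_\ell)$ is a flawed pair: the three defining inequalities reduce respectively to $b_i<b'_i$, $b_i\ge 0$, and $a_i\ge 0$.

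For the converse I would argue by contradiction. Suppose $T\le T'$ yet a flawed pair $(v_i,v'_j)$ exists. I first reduce to the case where $v'_j$ is a left child of its parent in $T'$: $v'_j$ cannot be the root (its wedge would be $[0,n]$, incompatible with $x_r(v'_j)<x_r(v_i)\le n$), and if it is the right child of some $u'$, then $x_r(u')=x_r(v'_j)$ while $x_\ell(u')<x_\ell(v'_j)$, so $(v_i,u')$ is again a flawed pair; iterating, we reach a flawed pair whose upper node is a left child. Setting $p:=x_r(v'_j)=j+a'_j$, the parent of $v'_j$ is then the node of infix index $p+1$, namely $u'=v'_{p+1}$, and its left subtree is exactly $T'_{v'_j}$, so $b'_{p+1}=1+a'_j+b'_j$. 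The flawed-pair inequalities imply $p+1\in[i-b_i\isep i+a_i]$, which is the infix range of $T_{v_i}$, so $v_{p+1}$ lies in $T_{v_i}$ and consequently $x_\ell(v_{p+1})\ge x_\ell(v_i)$. On the other hand, the Tamari hypothesis $b_{p+1}\ge b'_{p+1}$ yields
\[
x_\ell(v_{p+1})=p-b_{p+1}\le p-1-a'_j-b'_j=x_\ell(v'_j)<x_\ell(v_i),
\]
which contradicts the previous bound.

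The main delicate step is the left-child reduction in the converse; once that normalization is done, the identification $u'=v'_{p+1}$ and the identity $b'_{p+1}=1+a'_j+b'_j$ are immediate, and the rest is just unfolding the wedge coordinates. The equivalence between the canonical and smooth versions of the flawed-pair condition, observed just before the statement, lets me freely pass between the two pictures.
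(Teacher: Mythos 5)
Your proof is correct, and it rests on the same engine as the paper's: the (dual) bracket-vector characterization of Tamari intervals combined with the identities $x_\ell(v_i)=i-b_i-1$, $x_r(v_i)=i+a_i$. The ``if'' direction is the exact dual of the paper's argument (the paper shows $\bvec(T)_i>\bvec(T')_i$ produces a flawed pair using the right child of $v_i$ in $T$; you show $\dualbvec(T)_i<\dualbvec(T')_i$ produces one using the left child of $v'_i$ in $T'$). In the converse, the paper simply takes the index $x_\ell$ and bounds $\bvec(T)_{x_\ell}\ge x_r-x_\ell$ versus $\bvec(T')_{x_\ell}\le x_r'-x_\ell$ in two lines, whereas you first normalize $v'_j$ to be a left child and then work at $p+1$. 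That normalization, while correct and carefully justified, is not actually needed: the inequality $\dualbvec(T')_{p+1}\ge x_r'-x_\ell'$ already holds because the leaf at $p=x_r'$ is the rightmost leaf of $T'_{v'_j}$, so $T'_{v'_j}$ is \emph{contained} in the left subtree of $v'_{p+1}$ (equality is only needed if you insist on the identity $b'_{p+1}=1+a'_j+b'_j$, which the argument does not require). Dropping the reduction would shorten your converse to match the paper's in length; as written the proof is still valid.
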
 
\begin{proof}
Recall the specification of bracket-vectors illustrated in~\Cref{fig:bracket}. 
If $(T,T')\notin\tamint_n$, then there exists $i\in[n]$ such that the $i$th entry is larger in $\bvec(T)$ than in $\bvec(T')$. 
Let $v$ be the right child of the $i$th node of $T$, and let $v'$ be the $i$th node of $T'$. Then clearly $v,v'$ form a flawed pair. 

Conversely, suppose $(T,T')\in\pair_n$ has a flawed pair $v,v'$. Then, using the notation $x_{\ell},x_r,x_{\ell}',x_r'$ for the related absissas as defined before the statement of the lemma, the $x_{\ell}$-th entry is at least $x_r-x_{\ell}$ in $\bvec(T)$,
and at most $x_{r'}-x_{\ell}$ in $\bvec(T')$, hence is smaller in $\bvec(T')$ than in $\bvec(T)$, so that $(T,T')\notin \tamint_n$. 
\end{proof}

Figure~\ref{fig:examples_phi} shows examples of pairs $(T,T')$ being in $\tamint_n$ or not, one can find a flawed pair in the pair not in $\tamint_n$. For a meandering diagram $M\in\mdiag_n$, a \tdef{flawed pair} is a pair made of a lower arc $(x_\ell, x_r)$ and an upper arc $(x'_\ell, x'_r)$ such that $x_{\ell}' < x_\ell < x_r' < x_r$. 
 \begin{lemma}\label{lem:flawed_meandering}
 A meandering diagram $M$ is a meandering tree if and only if it has no flawed pair.
 \end{lemma}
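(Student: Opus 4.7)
Since the underlying graph $G$ has $n+1$ vertices and $n$ edges, $G$ is a tree if and only if it is acyclic. I will prove the equivalent statement: $G$ contains a cycle iff $M$ has a flawed pair, via a planar topology argument. The arcs of $M$ form a planar graph $H$ with $V = 2n+1$ vertices and $E = 2n$ edges (upper and lower arcs sit in disjoint half-planes and so do not cross); every white vertex has degree $2$ in $H$, and contracting all white vertices recovers $G$, so $G$ is a tree iff $H$ is. Applying Euler's formula $V - E + F = 1 + C$ to $H$ gives $F = C$, so $H$ is a tree iff $F = 1$, i.e.\ iff the arcs enclose no bounded face. It therefore suffices to show that $M$ has a flawed pair iff the arcs of $M$ enclose a bounded face.

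For the direction \emph{flawed pair $\Rightarrow$ bounded face}, given a flawed pair $L = (w_L, v_L)$ and $U = (u_U, w_U)$ with $u_U < w_L < w_U < v_L$, I form the closed curve $\gamma$ by concatenating $U$, the axis segment $[w_U, v_L]$, the arc $L$ reversed, and the axis segment $[u_U, w_L]$. Since $U$ and $L$ lie in opposite half-planes and the two axis segments are disjoint, $\gamma$ is a simple closed Jordan curve bounding a region $R$. By the non-crossing condition, any upper arc of $M$ with an endpoint strictly in $(u_U, w_U)$ is forced by nesting with $U$ to have both endpoints in $[u_U, w_U]$, and analogously for any lower arc with an endpoint in $(w_L, v_L)$ relative to $L$; hence no arc of $M$ crosses $\gamma$, so $R$ contains at least one full face of $H$, which is necessarily bounded.

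For the direction \emph{bounded face $\Rightarrow$ flawed pair}, I take a bounded face $F^\ast$ of $H$ and analyze its boundary walk. The leftmost vertex $v_{\min}$ of $\partial F^\ast$ must be black (otherwise the unique upper arc at the leftmost white would have its black endpoint even further left, contradicting minimality), and at $v_{\min}$ the walk uses two upper arcs $(v_{\min}, w_a), (v_{\min}, w_b)$ with $w_a < w_b$. Symmetrically, the rightmost vertex $v_{\max}$ is black, and $\partial F^\ast$ uses two lower arcs $(w_c, v_{\max}), (w_d, v_{\max})$ at $v_{\max}$ with $w_c < w_d$. The boundedness of $F^\ast$ then forces the key interleaving $w_c < w_b$, yielding the flawed pair $L = (w_c, v_{\max})$, $U = (v_{\min}, w_b)$ with $v_{\min} < w_c < w_b < v_{\max}$. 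The principal anticipated obstacle is exactly this final step: the inequality $w_c < w_b$ is immediate when $\partial F^\ast$ has only $4$ arcs (then $\{w_a, w_b\} = \{w_c, w_d\}$ by vertex-sharing and the claim reduces to the trivial $w_a < w_b$), but for longer boundary walks it requires a careful case analysis exploiting how $F^\ast$ must straddle the axis together with the non-crossing constraints on the intermediate arcs.
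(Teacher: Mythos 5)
Your approach is genuinely different from the paper's. The paper argues combinatorially: for one direction it shows that a minimal flawed pair forces the vertex set under its ``central segment'' to be a union of components, so $G$ is disconnected; for the other it takes a cycle $\hat\sigma$, looks at its leftmost black point $b$ and the highest upper arc $(b,w)$ on $\hat\sigma$, and uses the no-flawed-pair hypothesis to show that every $b$-to-$b'$ path in $\hat\sigma$ must traverse the lower arc at $w$, contradicting $\hat\sigma$ being a cycle. You instead work topologically: reduce to whether the planar graph $H$ of arcs has a bounded face (via Euler's relation, using $V-E=1$), and then translate ``flawed pair'' into ``Jordan curve enclosing a bounded face.''

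The first direction of your proof (flawed pair $\Rightarrow$ bounded face) is essentially sound: $\gamma$ is a simple closed curve, arcs live in a single half-plane and cannot cross $U$ or $L$, hence no arc crosses $\gamma$, and the nonempty interior of $\gamma$ must contain at least one face of $H$, which is bounded. But the second direction has a genuine gap that you yourself flag. You correctly observe that the leftmost vertex $v_{\min}$ of $\partial F^\ast$ is black and incident (in the boundary walk) to upper arcs only, and symmetrically for $v_{\max}$, and that the desired flawed pair would be $\bigl((w_c, v_{\max}), (v_{\min}, w_b)\bigr)$ provided $w_c < w_b$. You then verify this interleaving only in the case where $\partial F^\ast$ consists of exactly four arcs, and explicitly declare the general case to be an ``anticipated obstacle'' requiring a ``careful case analysis'' that you do not carry out. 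That step \emph{is} the content of the lemma in this direction---it is precisely where the non-crossing constraints must be exploited to show the face is ``straddling'' the axis in the right way---so leaving it unproved leaves the proof incomplete. As a secondary point, your setup tacitly assumes the boundary walk uses two distinct arcs at $v_{\min}$ (and at $v_{\max}$); if $v_{\min}$ has degree $1$ in $H$ (a pendant vertex inside a cycle), the walk traverses the same arc twice and $w_a=w_b$, so the phrase ``two upper arcs with $w_a<w_b$'' needs to be adjusted. To compare: the paper's cycle argument avoids all of these face-combinatorics issues, and is shorter once set up; your Euler/Jordan framework is conceptually clean but, to be a complete proof, must nail down the interleaving $w_c<w_b$ for an arbitrary bounded face.
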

 \begin{proof}
Assume $M$ has a flawed pair, with $(x_\ell, x_r)$ and $(x'_\ell, x'_r)$ its two arcs. We observe that $x_\ell$ and $x'_r$ correspond to white points, and $x_r, x'_\ell$ black ones. Consider the horizontal segment $S = [x_\ell, x'_r]$ on the $x$-axis that we call the \emph{central segment} of the flawed pair. We pick $S$ to have minimal length $x'_r-x_\ell$ over all flawed pairs. As both ends of $S$ are white points, it must contain at least one black point. Let $G$ be the underlying graph of $M$, with $V_S$ the set of vertices corresponding to black points
in $S$. We are going to show that vertices in $V_S$ are only connected in $G$ to other vertices in $V_S$. 
Let $b$ be a black point in $S$, and consider an arc starting from some white point $w$ and ending at $b$. If it is an upper arc, that is $b < w$, then $w< x'_r$ since $(b,w)$ cannot cross $(x'_\ell,x'_r)$ and the last inequality is strict due to the uniqueness of upper arc starting from a white point. Symmetrically, the same analysis holds when it is a lower arc. Thus, all black points in $S$ are linked to white points strictly inside $S$. Now let $w$ be a white point strictly inside $S$ such that its associated upper arc leaves $S$ and reaches a black point $b<x_l$. Then $(x_\ell,x_r)$ and $(b,w)$ form a flawed pair with central segment of length $w-x_\ell<x'_r-x_\ell$, contradicting the minimality of $S$. The same analysis works symmetrically if a lower arc starts from $w$. Since vertices in $V_S$ can only reach vertices in $V_S$, $G$ is disconnected, so that $M$ is not a meandering tree.

Now assume that $M$ has no flawed pair. Let $G$ be the underlying graph of $M$. Assume for contradiction that $G$ is not a tree. Since $G$ has excess $-1$, it has a cycle $\sigma$, corresponding to a cycle $\hat{\sigma}$ in $M$. Let $b$ (resp. $b'$) be the leftmost (resp. rightmost) point on $\hat{\sigma}$; note that $b$ and $b'$ are black. Let $(b, w)$ be the higher upper arc entering $b$ in $\hat{\sigma}$. As $M$ is crossing-free, all upper arcs below $(b,w)$ stay in the segment $S = [b, w]$. For lower arcs starting from points in $S$, only those starting from white points may go to the right of $S$. As $M$ has no flawed pair, every such lower arc ends inside $S$, except the lower arc incident to $w$. We thus conclude that any path in $\hat{\sigma}$ from $b$ to $b'$ must pass by this lower arc, which contradicts the fact that $\hat{\sigma}$ is a cycle. 
 \end{proof}

\begin{figure}[!ht]
\begin{center}
\includegraphics[width=10cm]{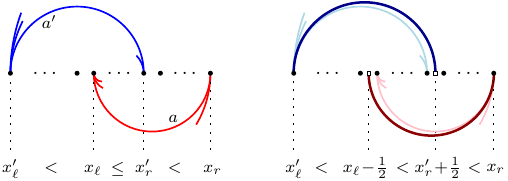}
\end{center}
\caption{The 1-to-1 correspondence between maximal flawed pairs of $X\in\pair_n$ (in its smooth drawing) and flawed pairs of $\treestobil(X)$.}
\label{fig:transfer_flawed}
\end{figure}

\begin{lemma}\label{lem:flawed_transfer}
 Let $X = (T,T') \in \pair_n$. Then $X$ has a flawed pair in its smooth drawing if and only if $\treestobil(X)$ has a flawed pair. 
\end{lemma}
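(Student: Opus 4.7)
The plan is to unpack both flawed-pair conditions via the bracket-vector formulas of~\Cref{rem:dualbracket} and then bridge them using~\Cref{lem:flawed_canonical}. Writing $\bvec(T) = (a_1,\dots,a_n)$ and $\dualbvec(T') = (b_1,\dots,b_n)$, the lower (resp.\ upper) arcs of $\treestobil(X)$ have the form $(i-\tfrac{1}{2}, i+a_i)$ (resp.\ $(j - b_j - 1, j - \tfrac{1}{2})$), so a meandering flawed pair is nothing but a choice of indices $i<j$ in $[n]$ with $a_i \geq j-i$ and $b_j \geq j-i$. Tree-theoretically, this says that $v_j$ lies in the right subtree of $v_i$ in $T$, while $v_i$ lies in the left subtree of $v_j$ in $T'$.

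For the direction ``$\treestobil(X)$ has a meandering flawed pair $\Rightarrow$ $X$ has a smooth flawed pair'', I would argue that the second condition above forces $v_j$ to be the lowest common ancestor of $v_i,v_j$ in $T'$; hence $v_j$ is not in the right subtree of $v_i$ in $T'$, giving $\bvec(T')_i < j-i \leq a_i$. By the bracket-vector characterization of the Tamari order, $T \not\leq T'$, and~\Cref{lem:flawed_canonical} yields a smooth flawed pair in $X$.

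For the reverse direction, I start from a smooth flawed pair in $X$; by~\Cref{lem:flawed_canonical} this gives an index $k$ with $a_k > \bvec(T')_k$. Set $j_0 := k + \bvec(T')_k + 1$; then $j_0 \leq n$, $v_{j_0}$ lies in the right subtree of $v_k$ in $T$, and in $T'$ it lies just outside the subtree of $v_k$. The LCA $v_m$ of $v_k, v_{j_0}$ in $T'$ then satisfies $k < m \leq j_0$, and I would split cases: if $m = j_0$, then $v_k$ lies in the left subtree of $v_{j_0}$ in $T'$ and the pair $(k, j_0)$ is meandering-flawed; if $k < m < j_0$, then $v_k$ lies in the left subtree of $v_m$ in $T'$ while $v_m$ lies in the right subtree of $v_k$ in $T$ (because $m - k < j_0 - k \leq a_k$), so the pair $(k, m)$ works.

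The main obstacle is the second sub-case of the reverse direction: the natural candidate pair $(k, j_0)$ can fail to be meandering-flawed because $v_k$ need not sit in the left subtree of $v_{j_0}$ in $T'$; one must retreat to the intermediate LCA $v_m$ and verify separately that $v_m$ remains a right-descendant of $v_k$ in $T$, for which the strict inequality $a_k \geq \bvec(T')_k + 1 = j_0 - k$ is essential.
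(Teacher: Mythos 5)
Your proof is correct, but it takes a genuinely different route from the paper's. The paper argues geometrically: it introduces \emph{maximal} flawed pairs of arcs in the smooth drawing (where the upper arc is the outermost one sharing its right end and the lower arc the outermost one sharing its left end) and exhibits an explicit $1$-to-$1$ correspondence between these and the flawed pairs of $\treestobil(X)$, tracking how each coordinate shifts by $\pm\tfrac12$ under the local operation of Figure~\ref{fig:local_op_phi}. Your proof instead pivots through the bracket-vector characterization: you translate a meandering flawed pair into the inequalities $i<j$, $a_i\geq j-i$, $b_j\geq j-i$, read them off as subtree-containment conditions on $v_i,v_j$ in $T$ and $T'$, and then route both directions through the equivalence ``$X$ has a smooth flawed pair $\iff T\not\leq T'$'' supplied by~\Cref{lem:flawed_canonical}, with the LCA in $T'$ furnishing the witness in the harder direction. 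The trade-offs: the paper's argument is shorter and yields a stronger conclusion (a bijection between two families of flawed pairs, not just an existence equivalence), but leans on a figure and the notion of maximality; your argument is more self-contained and purely arithmetic, and your identification of the genuine difficulty is exactly right --- the pair $(k,j_0)$ need not be meandering-flawed, one must retreat to the LCA $v_m$, and the strict inequality $a_k>\bvec(T')_k$ is what guarantees $v_m$ still sits in the right subtree of $v_k$ in $T$. One small point worth making explicit: $j_0\leq n$ follows from $j_0\leq k+a_k\leq n$, which again uses the strict inequality; and in fact the two sub-cases $m=j_0$ and $m<j_0$ can be merged, since in both the pair $(k,m)$ with $v_m$ the LCA in $T'$ satisfies all three conditions.
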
 
\begin{proof}
 In the smooth drawing of $X$, a flawed pair $a,a'$ is called \emph{maximal} if $a'$ is the outermost upper arc with its right end at $x_r'$, and $a$ is the outermost lower arc with its left end at $x_{\ell}$. Note that if $X$ has a flawed pair then it has a maximal one. Given~\Cref{lem:smooth}, in a maximal flawed pair, $a'$ is not the outermost arc with the left end at $x_\ell'$, and similarly $a$ is not the outermost arc with the right end at $x_r$. Then, as illustrated in~\Cref{fig:transfer_flawed}, the maximal flawed pairs of $X$ are in 1-to-1 correspondence with the flawed pairs of $\treestobil(X)$, where the correspondence preserves $x_\ell'$ and $x_r$, increases $x_r'$ by $1/2$, and decreases $x_\ell$ by~$1/2$. 

\end{proof}

The three above lemmas then ensure that the mapping $\treestobil$ specializes into 
a bijection between $\tamint_n$ and $\mtree_n$.

\begin{rem}
One can also consider specializations to other well-known subfamilies of pairs of binary trees. One promising example is given by twin pairs of binary trees, introduced in~\cite{Dulucq}, where they are shown to be equinumerous to Baxter permutations. A subfamily of these twin pairs is also known to be in bijection to synchronized Tamari intervals~\cite{fusy19}, it would be interesting to recover this bijection via the associated meandering diagrams. 	
\end{rem}
 
\subsection{Connection with previous work} To conclude the section, we discuss the connections of our construction with previous work on Tamari intervals.

\subsubsection{Relation with interval-posets} 
An \tdef{interval-poset} $P$ of size $n$ is a poset $([n], \infposet)$ such that, for any $t \in [n]$, the set $\mathrm{Int}_t:=\{s\in [n] \mid s\ \infposet\ t\}$ is an interval $[a_t\isep b_t]$.
Interval-posets were introduced by Ch\^atel and Pons~\cite{ChatelPons15}, who gave a size-preserving bijection to Tamari intervals. 

\begin{figure}
\begin{center}
\includegraphics[width=10cm]{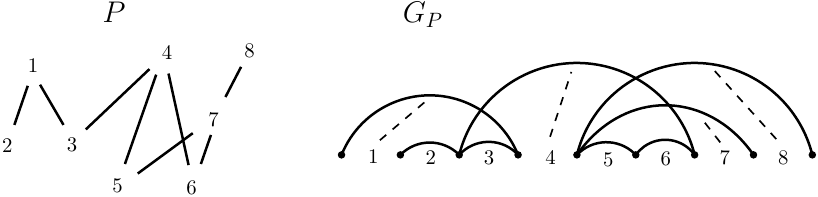}
\end{center}
\caption{An interval-poset $P$ and the associated graph $G_P$ (always a tree). The interval-poset is the one associated to the Tamari interval in the top row of~\Cref{fig:examples_phi}
by the Ch\^atel-Pons bijection.}
\label{fig:int_poset}
\end{figure}

\begin{figure}
\begin{center}
\includegraphics[width=12.4cm]{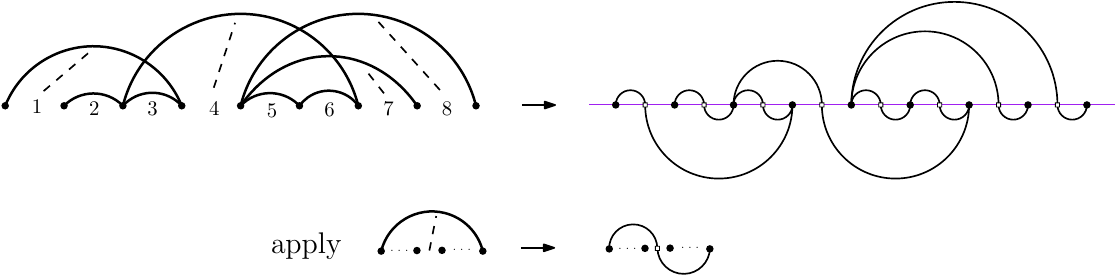}
\end{center}
\caption{Given an interval-poset $P$ associated to a Tamari interval $I$ by the Ch\^atel-Pons bijection, applying the shown local operation at each edge of $G_P$ yields $\phi(I)$.}
\label{fig:local_op_int_poset_tree}
\end{figure}

An interval-poset $P$ of size $n$ can be encoded by a graph $G_P$ with $n+1$ vertices carrying distinct labels in $[0\isep n]$ and $n$ edges carrying distinct labels in $[n]$. For each $t\in[n]$ the edge $e_t$ of label $t$ connects the vertices of labels $a_t-1$ and $b_t$. The graph $G_P$ can be represented with the vertices aligned on the $x$-axis, and edges as upper arcs, with a dashed link from the arc of $e_t$ to the unit-segment $[t-1,t]$ on the $x$-axis, see~\Cref{fig:int_poset}. The graph $G_P$ is implicitly considered in~\cite{rognerud18}, where it is shown to be a non-crossing tree if and only if $P$ is the interval-poset of a so-called exceptional Tamari interval, i.e., identifying to a Kreweras interval under a standard bijection from binary trees to non-crossing partitions (cf.~\Cref{subsub:kreweras}). 

It can be shown that, for any interval-poset $P$, the graph $G_P$ is a tree; one can argue by deletion of a min-element in $P$, and induction. Let $I=(T,T')$ be the Tamari interval associated to $P$ by the bijection in~\cite{ChatelPons15}, with the nodes of $T$ and of $T'$ labeled by left-to-right infix order. Let $t\in[n]$, with $v_t$ and $v_t'$ the corresponding nodes in $T$ and $T'$. 
 It follows from Remark~55 in~\cite{chatel2019weak}, stated as Proposition~2.1 in~\cite{rognerud18}, that $a_t$ is the label of the leftmost node in the subtree of $T'$ rooted at $v_t'$, and $b_t$ is the label of the rightmost node in the subtree of $T$ rooted at $v_t$. Hence, the meandering tree $\phi(I)$ is just obtained from $G_P$ by applying at each edge the local operation shown in~\Cref{fig:local_op_int_poset_tree}. 
 
 We actually discovered the bijection $\phi$ via the tree-encoding of interval-posets, our presentation in Section~\ref{sec:phi} is the shortcut version that operates directly on a pair of binary trees.

\subsubsection{Relation to cubic coordinates for Tamari intervals}
 The characterization of Tamari intervals given by~\Cref{lem:flawed_canonical} can be related to their encoding introduced in~\cite{combe2019cubic}. For $T\in\bintree_n$, note that the last entry of $\bvec(T)$ and the first entry of $\dualbvec(T)$ are always $0$. Moreover, for $(T,T')\in\tamint_n$, with $\bvec(T)=(a_1,\ldots,a_n)$ and $\dualbvec(T')=(b_1,\ldots,b_n)$, the absence of flawed pair ensures that, for $i \in [n-1]$, the entries $a_{i}$ and $b_{i+1}$ can not both be positive. Accordingly, the vector $(c_1,\ldots,c_{n-1})\in\mathbb{Z}^{n-1}$ defined by $c_i=-a_i$ if $a_i>0$, and $c_i=b_{i+1}$ otherwise, encodes the Tamari interval $(T,T')$. 

 This retrieves the ``cubic coordinates" vector of a Tamari interval of~\cite{combe2019cubic}. Without getting into details, this vector is characterized by certain inequalities coming from the study of interval-posets. There are three kinds of inequalities, coming from $\bvec(T)$, from $\dualbvec(T')$ and from the absence of (maximal) flawed pairs. In~\cite{combe2019cubic} the characterization was obtained using the order relation in the Tamari lattice expressed in terms of both bracket-vector and dual bracket-vector, via the construction of interval posets. However, here via~\Cref{lem:flawed_canonical}, we can obtain the same result directly, and using only the characterization of the Tamari lattice with bracket-vectors.

\section{Blossoming trees and their meandering representation}
\label{sec:blossoming}

We consider the following trees, which are in bijection with simple triangulations~\cite{PS06}.

\begin{defn} \label{defn:blossoming-tree}
 A \tdef{blossoming tree} $B$ is an unrooted plane tree such that each \tdef{node} (vertex of degree at least $2$) has exactly two neighbors that are leaves. We only consider such trees with at least two nodes. Edges incident to leaves are called \tdef{buds}, each bud being represented as an outgoing arrow. Edges not incident to leaves are called \tdef{plain edges}. The \tdef{size} $n\geq 1$ of $B$ is its number of plain edges, which is also the number of nodes minus $1$.
\end{defn}

 See the top left of Figure~\ref{fig:closure} for an illustration. The colors in that example come from the following refinement: a blossoming tree is \tdef{bicolored} if the half-edges of its plain edges are colored red or blue without monochromatic plain edge, and at each node the two buds separate the half-edges into a group of blue half-edges and a group of red half-edges  (one of the groups may be empty). We denote by $\blossoming_n$ the set of bicolored blossoming trees of size~$n$. 

\begin{rem}
 The bicoloration is unique up to the color of a starting half-edge. Hence, a blossoming tree, as an unrooted tree, yields at most two bicolored blossoming trees, and it yields just one if and only if it is stable by the half-turn symmetry.
\end{rem}

\begin{figure}[!ht]
\begin{center}
\includegraphics[width=\textwidth]{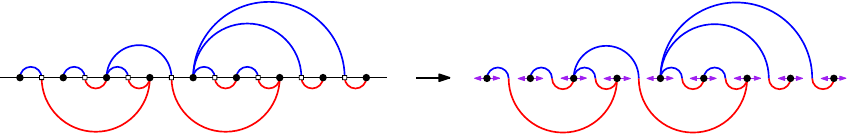}
\end{center}
\caption{A meandering tree $M$, and the corresponding bicolored blossoming tree $B=\bijbilblom(M)$.}
\label{fig:opening}
\end{figure}

\begin{defn} \label{defn:bilabel-to-blossoming} 
For $M \in \mtree_n$, we obtain a bicolored blossoming tree $B \in \blossoming_n$ by adding a ``left'' and a ``right'' bud at each black point along the $x$-axis, while keeping the colors of arcs, which are turned into half-edges of plain edges in $B$, see~\Cref{fig:opening}. Let \tdef{$\bijbilblom$} be the mapping sending $M$ to $B$. 
\end{defn}

\medskip

\ni{\bf Inverse mapping of $\bijbilblom$.} To prove that $\bijbilblom$ is a bijection, we now describe its inverse mapping $\bijblombil$. For this, we need to define the closure of a blossoming tree. It is constructed  in a different way from~\cite{PS06}, where it yields a rooted simple triangulation, cf.~\Cref{rem:poulalhon-schaeffer}. 
A \tdef{planar map} is the embedding of a connected graph in the plane such that edges only intersect at vertices. The embedding cuts the plane into \tdef{faces}, and the one extending to infinity is the \tdef{outer face}.

For simplicity, in the following, we will use the shorthand ``\cw'' (resp. ``\ccw'') for ``clockwise'' and ``counterclockwise''.

 Given a bicolored blossoming tree $B$, its \tdef{closure}, denoted by $\oB$, is constructed as follows. For each plain edge $e$, we insert an \tdef{edge-vertex} $v_e$ in its middle, and we attach new open half-edges called \tdef{legs} to $v_e$ on each side of $e$. The \ccw-contour on the current tree yields a cyclic parenthesis word, whose opening (resp. closing) parentheses are given by buds (resp. legs). We then match buds and legs in a planar way, see~\Cref{fig:closure}(b). 
  Since $B$ has $2n+2$ buds and $2n$ legs, two buds are left unmatched. The object we obtain, which is a planar map with two dangling buds on the outer face, is the closure $\oB$ of $B$. These two buds are attached to distinct vertices as is easily checked: these are called \tdef{extremal vertices}.

\begin{figure}[!ht]
\begin{center}
\includegraphics[width=\linewidth]{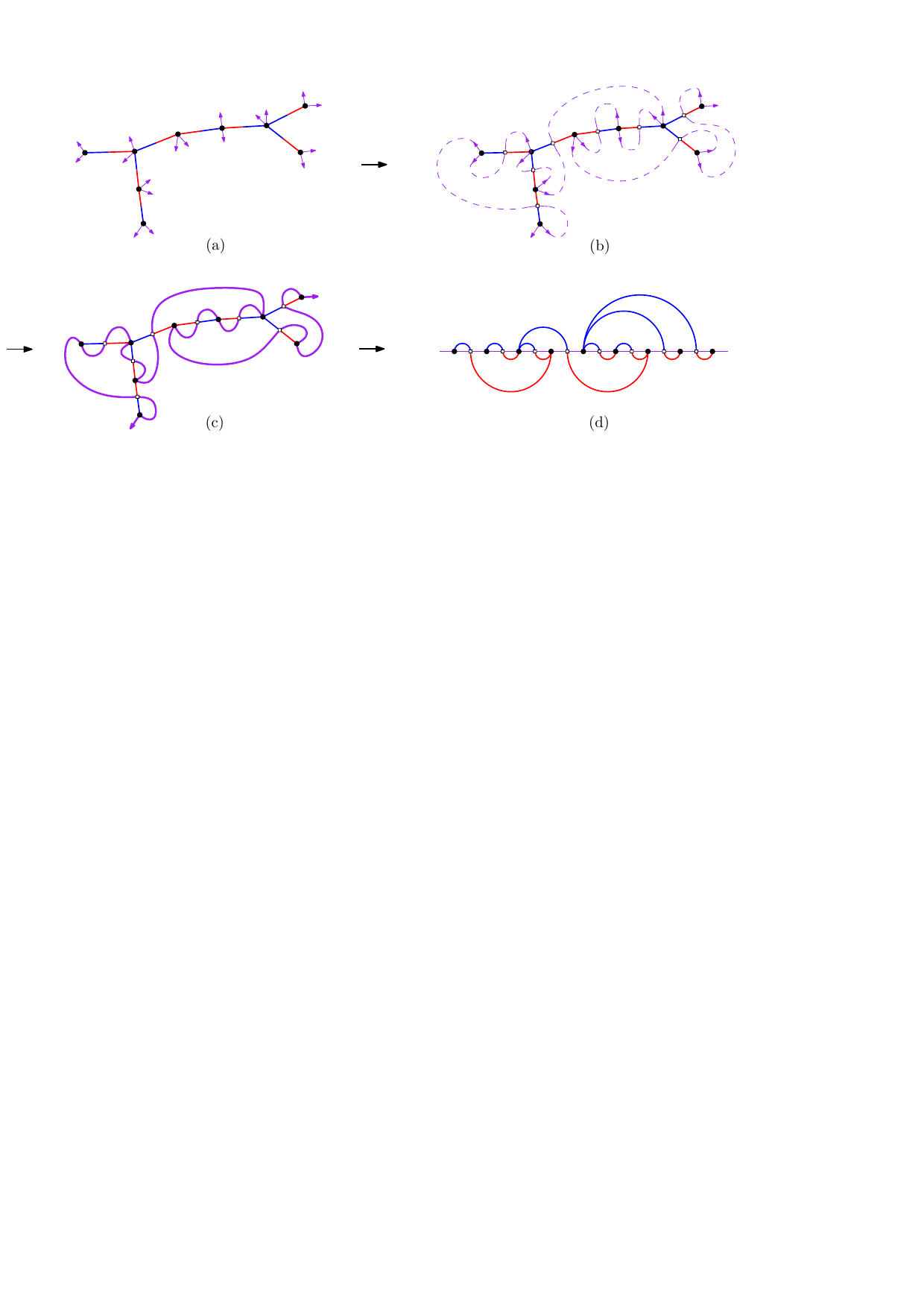}
\end{center}
\caption{(a) A bicolored blossoming tree $B$; (b) the matching of buds with legs; (c) the closure $\oB$ of $B$, where the meandric path (the concatenation of the closure-edges) is shown bolder; (d) the meandering tree $M = \bijblombil(B)$ obtained by stretching the meandric path, so as to have the blue (resp. red) arcs in the upper (resp. lower) half-plane.}
\label{fig:closure}
\end{figure}

The edges of the closure $\oB$ of a blossoming tree $B$ come in three types:
\begin{itemize}
\item \tdef{Dangling buds}: the two buds left unmatched;
\item \tdef{Tree-edges}: edges resulting from the subdivision of plain edges of $B$;
\item \tdef{Closure-edges}: edges resulting from matching buds and legs.
\end{itemize}
Every non-extremal vertex of $\oB$ is incident to exactly two closure-edges, while the two extremal vertices are incident to a unique closure-edge. We also note that extremal vertices in $\oB$ are not necessarily leaves in $B$.

\begin{lemma}\label{lem:meandering_cycle}
 For $B\in\blossoming_n$, let $\oB$ be the closure of $B$, and $\pi$ the subgraph of $\oB$ formed by the closure-edges. Then we have
 \begin{itemize}
 \item $\pi$ is a Hamiltonian path from one extremal vertex to the other;
 \item $\pi$ splits half-edges of $B$ by color;
 \item For any tree-edge $e = \{u,v\}$ of $\oB$, with $u$ a tree-vertex and $v$ an edge-vertex, let $\pi_e$ be the unique subpath of $\pi$ from $u$ to $v$, and $\sigma_e = \pi_e \cup \{e\}$, which is a cycle. Then the interior of $\sigma_e$ is on the right of $e$ traversed from $u$ to $v$.
 \end{itemize}
\end{lemma}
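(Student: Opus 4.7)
The overall strategy is to analyze $\oB$ through the sequential closure process: repeatedly match a \emph{locally closable} bud-leg pair, i.e., a bud immediately followed by a leg in the \ccw-contour, mirroring the planar parenthesis-matching of the cyclic sequence of $2n+2$ buds and $2n$ legs. I denote by $\pi^{(k)}$ the subgraph of closure-edges after $k$ closures, so that $\pi = \pi^{(2n)}$.

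For part (1), I first compute degrees in $\pi$: each edge-vertex has both of its legs matched and so has degree $2$; each non-extremal tree-vertex has both of its buds matched and so has degree $2$; the two extremal vertices carry one dangling bud each, hence have degree $1$. Next I show that $\pi$ contains no cycle by proving, by induction on $k$, that $\pi^{(k)}$ is acyclic: the inductive step amounts to showing that, when a locally closable pair $(b,\ell)$ at a tree-vertex $u'$ and an edge-vertex $v_e$ is matched, these two vertices lie in distinct components of $\pi^{(k-1)}$. This is a standard consequence of the non-crossing nature of the matching, which forbids any closure-edge from creating a cycle with the previously drawn ones. Given acyclicity and the degree sequence, $\pi$ is a forest with exactly two degree-$1$ vertices and all other vertices of degree $2$, hence a single path; since every vertex has positive degree in $\pi$, this path is Hamiltonian.

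For part (2), the color-splitting property is essentially immediate from the definitions. At a tree-vertex, the bicoloring axiom places the two buds so that they split the plain half-edges into a (possibly empty) blue group and a red group, and the closure-edges at that vertex lie exactly at those bud positions, preserving the splitting. At an edge-vertex $v_e$, the two legs sit on opposite sides of the plain edge $e$ by construction, and since $e$ has one blue and one red half-edge, the two closure-edges at $v_e$ again separate the colors.

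For part (3), I would argue directly from the planarity of $\oB$. Fix a tree-edge $e = \{u, v_e\}$ with $v_e$ the edge-vertex. At $v_e$, the two closure-edges lie on opposite sides of $e$: let $c_{\mathrm{r}}$ be the one on the right of $e$ traversed from $u$ to $v_e$, and $c_{\mathrm{l}}$ the one on the left. My key claim is that the last edge of $\pi_e$ incident to $v_e$ (when $\pi_e$ is traversed from $u$ to $v_e$) is $c_{\mathrm{r}}$. Given this, planarity of $\oB$ together with the fact that $\pi_e$ is disjoint from $e$ implies that the whole of $\pi_e$ lies on the right of $e$, so $\sigma_e = \pi_e \cup \{e\}$ bounds a disk on the right, as required. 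To prove the key claim, I would analyze the \ccw-contour locally at $v_e$ right after the closure that produced $c_{\mathrm{r}}$, and use the global direction of traversal of $\pi$ (from one extremal vertex to the other) to determine which of $c_{\mathrm{r}},c_{\mathrm{l}}$ is the \emph{entering} edge at $v_e$. The main obstacle I anticipate is this orientation-consistency argument, which requires carefully tracking how the direction of $\pi$ relates to the contour order; I would organize it by a local face-by-face analysis showing that each closure-edge always bounds its incident tree-edge on the right of the corresponding tree-vertex-to-edge-vertex orientation.
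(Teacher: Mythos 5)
Your approach differs substantially from the paper's. The paper proves parts (1) and (3) by a single clean induction: it shows by a counting argument that every blossoming tree has a \emph{short edge} (a plain edge whose edge-vertex has both legs matched, at distance~$1$ in the contour, to buds on its two endpoints), contracts that edge to reduce to size $n-1$, applies the induction hypothesis, and then expands back. This one mechanism delivers both the Hamiltonicity and the orientation claim. Your strategy for part (1) — degree count plus acyclicity of the incrementally built $\pi^{(k)}$ — and your strategy for part (3) — a direct planarity argument around the edge-vertex — are genuinely different routes, and the degree count itself is correct and elementary. Part (2) is handled essentially as in the paper.

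However, your proof has two real gaps, and you should not let the plausibility of the conclusions lull you into treating them as routine. For part (1), you reduce Hamiltonicity to the claim that $\pi^{(k)}$ stays acyclic, and then assert this is ``a standard consequence of the non-crossing nature of the matching.'' It is not: a non-crossing matching of half-edges around a tree can perfectly well produce cycles among the closure-edges when endpoints are shared, and here every tree-vertex carries two buds and every edge-vertex carries two legs, so endpoint-sharing is ubiquitous. Acyclicity is in fact equivalent (by your own degree count) to connectedness of $\pi$, so the assertion you are waving through is precisely the substantive half of the statement. You need an actual argument here — the paper's short-edge contraction is one; another would be a careful face-counting/Euler argument — but none is supplied.

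For part (3) the gap is one you acknowledge yourself: the ``key claim'' that $\pi_e$ enters the edge-vertex $v_e$ through the closure-edge $c_{\mathrm{r}}$ on the right of $e$ is exactly the content of the statement, and you describe an ``obstacle'' and a vague plan (``a local face-by-face analysis'') rather than a proof. Note also that even if the key claim held, the step ``planarity of $\oB$ implies the whole of $\pi_e$ lies on the right of $e$'' is not immediate: $\pi_e$ could a priori wind around the tree and re-enter the other side unless you simultaneously control where $\pi_e$ leaves $u$. The paper sidesteps all of this by doing part (3) with the same contraction induction used for part (1), which is worth adopting if you want to make the argument rigorous.
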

\begin{proof}
For the first statement, we proceed by induction on $n$. The base case $n = 1$ is easily checked. 
Let $n\geq 2$, and assume the first statement holds for size $n-1$. Let $B \in \blossoming_n$ and $\oB$ its closure. There are thus $n$ edge-vertices and $n+1$ tree-vertices in $\oB$. We then observe that, if a bud in $\oB$ is not directly succeeded by another bud of the same vertex in the \ccw-direction, then it is matched with the next leg from the next adjacent edge-vertex, with contour distance 1. We say that they form a \emph{short pair}, and clearly among the two buds of a tree-vertex, at least one is in a short pair. There are thus at least $n+1$ short pairs, while there are only $n$ edge-vertices in $\oB$, meaning that there is some edge-vertex $v_e$ of $\oB$, corresponding to some edge $e = \{u, u'\}$ in $B$, whose both legs are in short pairs with two buds $b, b'$, with $b$ on $u$ and $b'$ on $u'$. Such an edge $e$ is called a \emph{short edge} of $B$. Let $B' \in \blossoming_{n-1}$ be the tree obtained from $B$ by contracting $e$ in $B$ into a new vertex $u_0$ and removing $b, b'$; we call $B'$ the $e$-contraction of $B$. By the induction hypothesis, the closure-edges of $\oB'$ form a Hamiltonian path $\pi'$. It is clear that expanding the occurrence of $u_0$ in $\pi'$ into the path from $u$ to $v_e$ to $u'$ via the short pairs of $b$ and $b'$ gives a Hamiltonian path in $\oB$. 
 
 The second statement comes from the fact that, by construction, the two half-edges of an edge in $B$, which are of different colors, are on different sides of $\pi$, and half-edges of a vertex in $B$ are split by its buds, through which $\pi$ goes, into two groups of each color. 
 
For the third statement, we again proceed by induction on $n$. The base case $n=1$ is clear. Let $n\geq 2$, and assume the third statement holds for size $n-1$. Let $B \in \blossoming_n$, and let $e$ be a tree-edge of $\oB$, with the above notation in the statement. If $u$ and $v$ are consecutive along $\pi$, then $\sigma_e$ is a bi-gon, and thus $e$ has the interior of $\sigma_e$ on its right, since the matchings are performed in ccw order around $B$. If not, let $e'$ be a short edge of $B$, and let $B'$ be the $e'$-contraction of $B$. 
Let $\sigma_e'$ be the cycle of $\overline{B'}$ resulting from $\sigma_e$ after contraction. Since $u$ and $v$ are not consecutive along $\pi$, the tree-edge $e$ does not belong to $e'$, hence it does not collapse when contracting $e'$, and so it is on $\sigma_e'$. By induction it has the interior of $\sigma_e'$ on its right. Thus $e$ must have the interior of $\sigma_e$ on its right in $\oB$. 
\end{proof}

The Hamiltonian path $\pi$ of $\oB$ in~\Cref{lem:meandering_cycle} is called the \tdef{meandric path} of $\oB$. From the first statement of~\Cref{lem:meandering_cycle}, for $B\in\blossoming_n$, we may stretch the meandric path of $\oB$ into the horizontal segment $\{0\leq x\leq n,y=0\}$ with $2n+1$ equally-spaced vertices, along with tree-edges as semi-circular arcs. By the second statement of~\Cref{lem:meandering_cycle}, this can be done in a unique way with the blue (resp. red) half-edges of $B$ turned into the arcs above (resp. below) the segment. Let $M$ be the arc-diagram thus obtained. Then the third statement of~\Cref{lem:meandering_cycle} ensures that $M \in \mtree_n$. We define \tdef{$\bijblombil$} as the mapping that sends $B$ to $M$.

In order to prove that $\bijblombil$ is the inverse of $\bijbilblom$ we will need the following.

\begin{lemma}\label{lem:side}
Let $M\in\mtree_n$, and $S$ be any segment of length $1/2$ connecting two adjacent points on the horizontal line. Let then $\sigma_S$ be the unique embedded cycle formed by $S$ and a concatenation of arcs of $M$. Then $S$ traversed from its black end to its white end has the interior of $\sigma_S$ on its left. 
\end{lemma}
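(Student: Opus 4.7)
The plan is to prove the lemma by induction on $n$, paralleling the inductive strategy used in the proof of the third statement of~\Cref{lem:meandering_cycle}. First I observe that $\sigma_S$ is uniquely determined: the arcs of $M$ form the edges of a tree $\tilde{G}$ on all $2n+1$ points, obtained from the underlying tree of $M$ by subdividing each edge at its indexing white point. Adding $S$ to $\tilde{G}$ then creates a unique cycle, which is $\sigma_S$. The base case $n=1$ is immediate: each length-$\tfrac{1}{2}$ segment forms a bigon with the unique arc between its endpoints, whose bounded region visibly lies on the left of the segment traversed from black to white.

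For the inductive step, I would use the existence, for $n \geq 2$, of a \emph{short white point} $w$ in $M$, that is, a white point whose incident upper and lower arcs are both of length $\tfrac{1}{2}$. This is equivalent to the existence of a short edge in $\bijblombil(M)$, which was established in the proof of~\Cref{lem:meandering_cycle}. Writing $b_-$ and $b_+$ for the black neighbors of $w$, I contract the horizontal segment $[b_-, b_+]$ together with the two short arcs at $w$, merging $b_-, w, b_+$ into a single black point $b_0$; this yields a meandering tree $M' \in \mtree_{n-1}$. For a length-$\tfrac{1}{2}$ segment $S$ in $M$, if $S$ is one of $[b_-, w]$ or $[w, b_+]$, then $\sigma_S$ is a bigon with the corresponding short arc at $w$ and the claim is checked directly. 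Otherwise, $S$ corresponds to a segment $S'$ in $M'$, and $\sigma_S$ either coincides with $\sigma_{S'}$ (when the $\tilde{G}$-path between the endpoints of $S$ avoids $w$) or is obtained from $\sigma_{S'}$ by inserting a detour through $w$ via its two short arcs.

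The main obstacle is showing that in this last sub-case the side of $S$ on which the interior of $\sigma_S$ lies is unchanged by the contraction. My plan is to realize the contraction as a continuous deformation of planar arc diagrams, supported in an arbitrarily small neighborhood of $w$ and the two short arcs, that shrinks $[b_-, b_+]$ and the short arcs to a single point. Throughout this deformation $\sigma_S$ remains a simple closed curve and deforms continuously to $\sigma_{S'}$; since the deformation is disjoint from $S$, the side of $S$ on which the interior lies is preserved, and the desired statement for $M$ follows from the induction hypothesis applied to $M'$.
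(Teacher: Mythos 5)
Your proposal takes a genuinely different route from the paper. The paper's proof is direct and non-inductive: writing $a=(b_S',w_S)$ for the upper arc at the white end $w_S$ of $S$, it shows (using the absence of flawed pairs via~\Cref{lem:flawed_meandering}, planarity, and an edge/vertex count) that the arcs of $M$ incident to the points between $b_S'$ and $b_S$ form an induced subtree of $G$, so that the path of $G$ from $b_S'$ to $b_S$ — and hence all of $\sigma_S$ — stays in the region bounded above by $a$; from this the side of $S$ on which the interior lies is read off immediately. Your argument is instead an induction by contraction of a ``short white point,'' with a topological deformation argument in the inductive step, closely paralleling the contraction used in the proof of~\Cref{lem:meandering_cycle}.

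There is, however, a gap in your justification of the inductive device. You assert that the existence of a short white point of $M$ is ``equivalent to the existence of a short edge in $\bijblombil(M)$'' (you presumably mean $\bijbilblom(M)$, since $\bijblombil$ maps blossoming trees to meandering trees). But a short edge of a blossoming tree $B$ is defined via the closure $\oB$, i.e.\ via the matching of buds with legs along the \ccw-contour of $B$; relating this to short white points of $M$ when $B=\bijbilblom(M)$ is exactly the kind of control over the closure that~\Cref{lem:side} and~\Cref{prop:gamma-bijection} are meant to establish, so invoking it here is essentially circular, or at least a nontrivial unproved claim. This is repairable by proving the existence of short white points directly on $M$: if no such point existed, then for each $b\in[0\isep n-1]$ either $b$ has no incident upper arc or $b+1$ has no incident lower arc (since the innermost arc on either side of a black point, when it exists, has length $1/2$); since $0$ has no lower arc it must have an upper arc, forcing $1$ to have no lower arc, hence (by connectedness of $G$) an upper arc, and so on, forcing $n$ to have an upper arc — a contradiction. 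With this fix, and with the observation that the contraction together with the leftward shift of the points beyond $b_+$ is realized by an orientation-preserving ambient isotopy keeping $\sigma_S$ embedded (so that the side of $S$ containing the interior is preserved), your inductive argument goes through, though it is longer and less self-contained than the paper's direct one.
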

\begin{proof}
Let $G$ be the underlying graph of $M$. Note that the stated existence and uniqueness of $\sigma_S$ just follows from the fact that $G$ is a tree. 
Let $b_S$ be the black end and $w_S$ the white end of $S$. Assume $b_S$ is the left end of $S$. Let $a=(b_S',w_S)$ be the upper arc incident to $w_S$. Let $P_\bullet(S)$ be the set of black points between $b_S'$ and $b_S$ (including these two points) and let $P_\circ(S)$ be the set of white points between $b_S'$ and $b_S$ on the horizontal axis. For $w\in P_\circ(S)$ the upper arc incident to $w$ is below~$a$. By planarity it has to end at a point in $P_\bullet(S)$. And the lower arc incident to $w$ can not end on the right of $w_S$, otherwise with $a$ it would form a flawed pair, which is not possible by~\Cref{lem:flawed_meandering}. 
 
 Let $V_S$ and $E_S$ be respectively the sets of vertices and edges of $G$ corresponding to the points in $P_\bullet(S)$ and $P_\circ(S)$. Since all edges in $E_S$ connect two vertices in $V_S$, and since $|E_S|=|V_S|-1$, we conclude that $(V_S,E_S)$ form an induced subtree of~$G$. Hence, the path of $G$ between the vertices for $b_S'$ and $b_S$ passes only by vertices in $V_S$ and edges in $E_S$, hence in $M$ the corresponding sequence of arcs passes only by points in $S$, below the arc $a$. Since $\sigma_S$ is formed by $S$, $a$ and this sequence of arcs, we conclude that $S$ traversed from black end to white end has the interior of $\sigma_S$ on its left. The proof is similar if $b_S$ is the right end of $S$. 
\end{proof}
 
\begin{prop} \label{prop:gamma-bijection}
 For $n \geq 1$, the mapping $\bijbilblom$ is a bijection from $\blossoming_n$ to $\mtree_n$, with $\bijblombil$ its inverse.
\end{prop}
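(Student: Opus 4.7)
The plan is to establish that $\bijbilblom$ and $\bijblombil$ are well-defined mutually inverse maps between $\mtree_n$ and $\blossoming_n$.

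First I would verify well-definedness on both sides. That $\bijbilblom$ sends $\mtree_n$ into $\blossoming_n$ is essentially by construction: the underlying graph of $M$ is a tree, and adding a left and a right bud at each black point yields an unrooted plane tree in which every node carries two buds separating the blue (upper) half-edges from the red (lower) half-edges, which is precisely the definition of a bicolored blossoming tree. That $\bijblombil$ sends $\blossoming_n$ into $\mtree_n$ is the content of~\Cref{lem:meandering_cycle}: the first assertion gives that the meandric path is Hamiltonian so that stretching it into a horizontal segment is well-defined; the second ensures the colors split correctly across the line, so that upper arcs become blue and lower arcs become red; the third, together with~\Cref{lem:flawed_meandering}, rules out flawed pairs, so the resulting diagram is indeed a meandering tree.

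Next I would prove $\bijblombil \circ \bijbilblom = \mathrm{id}_{\mtree_n}$. Fix $M \in \mtree_n$ and set $B = \bijbilblom(M)$. The idea is to view $M$ itself as a planar map obtained from $B$ by reinserting the horizontal line, and then to show that this planar map coincides with the closure $\oB$. The nodes of $B$ are the $n+1$ black points and its edge-vertices are the $n$ white points; the horizontal line, split by the white points, supplies $2n$ segments of length $1/2$, each with a black end and a white end. I would argue that the planar matching of buds and legs in the closure of $B$ reproduces exactly these $2n$ segments. The key point is that~\Cref{lem:side} ensures each such segment bounds, together with a unique chain of arcs, a face of $M$ lying on its left when traversed from black to white, and this face structure is precisely what selects, in the ccw-contour parenthesis matching defining the closure, the bud and leg to be paired. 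Consequently $\oB$ and $M$ have the same planar embedding, the meandric path of $\oB$ is the horizontal line of $M$, and its stretching recovers $M$.

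Then I would prove $\bijbilblom \circ \bijblombil = \mathrm{id}_{\blossoming_n}$. Given $B \in \blossoming_n$ and $M = \bijblombil(B)$, by construction the tree-vertices of $B$ are the black points of $M$ and the edge-vertices are the white points, with the blue and red half-edges at each tree-vertex becoming the upper (blue) and lower (red) arcs at the corresponding black point of $M$. In the stretched picture the meandric path is the horizontal line, so at each tree-vertex of $B$ the two buds—which by bicoloration must sit between the blue group and the red group—are necessarily one to the left and one to the right of the black point along the horizontal line. Hence the two buds added by $\bijbilblom$ at each black point are exactly the original buds of $B$ in their original cyclic positions, so $\bijbilblom(M) = B$.

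The main obstacle is the first composition: the argument that the planar parenthesis matching of buds with legs along the ccw-contour of $B$ produces precisely the horizontal segments of $M$. This is exactly where~\Cref{lem:side} is crucial, since it pins down the face of $M$ adjacent to each horizontal segment, which is the same face that the planar closure procedure constructs when it pairs the corresponding bud and leg.
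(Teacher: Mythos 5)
Your proof follows essentially the same route as the paper: you rely on the preceding lemmas for well-definedness, dispose of the composition $\bijbilblom\circ\bijblombil=\mathrm{id}$ directly (the paper simply calls this "clear"), and invoke \Cref{lem:side} to show that the bud--leg matching in the closure of $B=\bijbilblom(M)$ reproduces exactly the length-$1/2$ segments of $M$, so that the meandric path is the horizontal line and stretching recovers $M$. The only substantive difference is presentational: the paper phrases the key step as "matched pairs correspond to consecutive points on the horizontal axis," while you phrase it in terms of the face bounded by each segment; both come down to the same use of \Cref{lem:side}.
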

\begin{proof}
 We have already seen that $\bijbilblom$ is a mapping from $\mtree_n$ to $\blossoming_n$ and $\bijblombil$ is a mapping from $\blossoming_n$ to $\mtree_n$. For $B\in\blossoming_n$, we clearly have $\bijbilblom(\bijblombil(B))=B$. For $M\in\mtree_n$, let $B=\bijbilblom(M)$, with $B$ drawn in the plane as in~\Cref{fig:opening} right. Then~\Cref{lem:side} ensures that, in the matching of buds with legs to perform the closure of $B$, the matched pairs correspond to consecutive points on the horizontal axis. Thus, the meandric path of $B$ is made by the segments between consecutive points on the horizontal axis, and thus $M$ is actually the stretched closure of $B$, i.e., $M=\bijblombil(B)$. 
\end{proof}

\begin{figure}[!ht]
\begin{center}
\includegraphics[width=\linewidth]{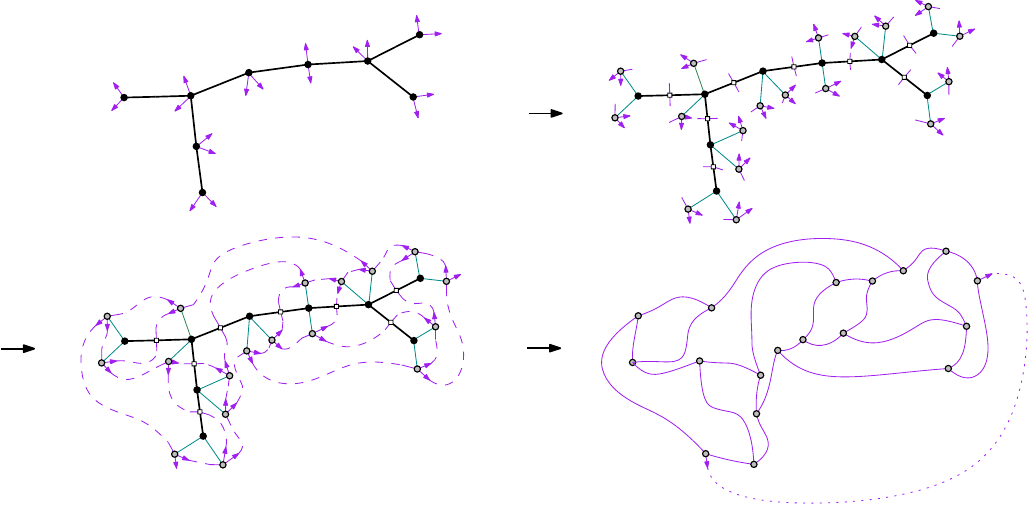}
\end{center}
\caption{The Poulalhon-Schaeffer closure bijection in the dual setting. At each leaf of the blossoming tree $B$ are attached two buds and a leg, with the leg just before the two buds in counterclockwise order around $B$; and as in our case two legs are attached at the middle of every plain edge, one on each side. Then the buds are planarly matched with legs in counterclockwise order around $B$. The output is the dual of a simple planar triangulation, with a marked edge formed by joining the two unmatched buds.}
\label{fig:closure_dual}
\end{figure}

\begin{rem} \label{rem:poulalhon-schaeffer}
As illustrated in~\Cref{fig:closure_dual}, the Poulalhon--Schaeffer closure bijection proceeds differently, by attaching two buds and a leg at each leaf of the blossoming tree. 
The two ways to perform the closure are related, for instance the unmatched buds are carried by the same nodes. 
The presentation in~\Cref{fig:closure_dual} is actually the dual formulation of the Poulalhon--Schaeffer bijection, as described by Gilles Schaeffer (personal communication).
\end{rem}

\section{The main bijection, properties and specializations} \label{sec:properties}

Combining~\Cref{prop:phi} and~\Cref{prop:gamma-bijection}, we obtain the following.

\begin{theo}\label{theo:main}
The mapping $\inttoblom := \bijbilblom \circ \treestobil$ is a bijection from $\tamint_n$ to $\blossoming_n$. Its inverse is $\blomtoint := \biltotrees \circ \bijblombil$.
\end{theo}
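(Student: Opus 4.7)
The plan is essentially to assemble two bijections that have already been established in the preceding sections. Proposition~\ref{prop:phi} constructs $\treestobil$ as a bijection from $\tamint_n$ to $\mtree_n$, with inverse $\biltotrees$ defined via right-attachment points in the upper and (reflected) lower half-plane sub-diagrams. Proposition~\ref{prop:gamma-bijection} constructs $\bijbilblom$ as a bijection from $\mtree_n$ to $\blossoming_n$ by attaching a left and a right bud at each black point of the meandering tree, with inverse $\bijblombil$ given by the closure-and-stretch construction. Since composition of bijections is a bijection, $\inttoblom = \bijbilblom \circ \treestobil$ is automatically a bijection from $\tamint_n$ to $\blossoming_n$.

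The statement about the inverse is then just the standard identity $(f\circ g)^{-1}= g^{-1}\circ f^{-1}$, giving
\[
\inttoblom^{-1} = \treestobil^{-1} \circ \bijbilblom^{-1} = \biltotrees \circ \bijblombil = \blomtoint.
\]
So the proof is a single line once~\Cref{prop:phi} and~\Cref{prop:gamma-bijection} are invoked.

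There is no genuine obstacle at this point: all the substantive work has already been carried out upstream, namely the forbidden-pattern characterizations of Tamari intervals and of meandering trees (Lemmas~\ref{lem:flawed_canonical} and~\ref{lem:flawed_meandering}), the transfer of flawed pairs under $\treestobil$ (\Cref{lem:flawed_transfer}), and the Hamiltonian/separation properties of the meandric path in the closure $\oB$ (\Cref{lem:meandering_cycle} and \Cref{lem:side}). The role of~\Cref{theo:main} is thus purely to package the composed bijection $\inttoblom$ under a single name, which will be the object tracked and specialized in~\Cref{sec:properties} and the subsequent sections.
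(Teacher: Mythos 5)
Your proposal is correct and matches the paper exactly: the paper's own justification for Theorem~\ref{theo:main} is simply "Combining Propositions~\ref{prop:phi} and~\ref{prop:gamma-bijection}, we obtain the following," which is the same one-line composition of the two previously established bijections (and their inverses) that you describe.
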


We now give some properties and specializations of the bijection $\inttoblom$.

\subsection{Parameter-correspondence}\label{sec:parameter}

Let $X = (T,T') \in \pair_n$, and consider its canonical drawing. For $k \in [0, n]$, the \tdef{$\nearrow$-branch} at $k$ is the left branch of $T'$ ending at $x = k$, and the number of nodes on it is given by $\dne(T')_k$. Similarly, we define the \tdef{$\swarrow$-branch} at $k$ to be the right branch of $T$ ending at $x = k$, and we denote its number of nodes by $\dsw(T)_k$, where $\dsw(T)$ is given by $\dsw(T) = \dne(\mir(T))$. We may consider $\dsw$ as the \tdef{dual degree vector} of $T$. 
or $k\in [0\isep n]$, we say that the $k$th diagonal has \tdef{bi-length} $(i,j)$ if $\dne(T')_k=i$ and $\dsw(T)_k=j$; the \tdef{bi-length vector} of $X$ is the corresponding vector of pairs of integers, see the upper part of~\Cref{fig:lengths}. 
On the other hand, for $B$ a bicolored blossoming tree, a node of $B$ is said to have \tdef{bi-degree} $(i,j)$ if it is incident to $i$ blue half-edges and to $j$ red half-edges. 
As illustrated in~\Cref{fig:lengths}, we have the following correspondence.

\begin{figure}
\begin{center}
\includegraphics[width=\linewidth]{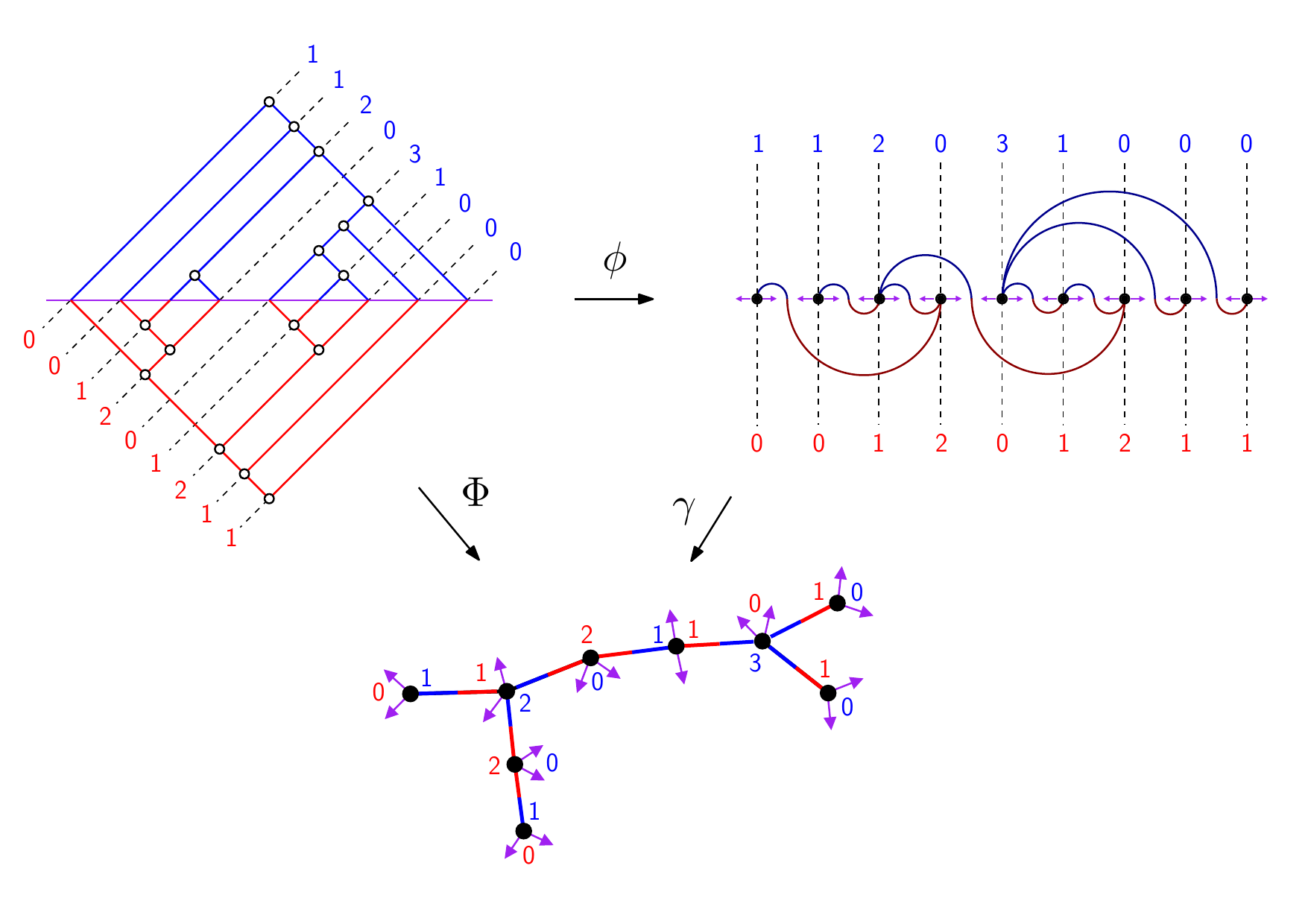}
\end{center}
\caption{Correspondence between the bi-lengths of diagonals of a Tamari interval and the bi-degrees of the nodes in the associated bicolored blossoming tree, via the bijection $\inttoblom$.}
\label{fig:lengths}
\end{figure}

\begin{lemma}\label{lem:bi-lengths}
Let $I = (T,T') \in \tamint_n$ be a Tamari interval. Then, for $i, j \geq 0$, the number of diagonals of bi-length $(i, j)$ in $I$ corresponds to the number of nodes of bi-degree $(i, j)$ in $\inttoblom(I)$.
\end{lemma}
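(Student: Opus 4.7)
The plan is to trace the number of upper and lower arcs at each black point through the two steps of $\inttoblom = \bijbilblom \circ \treestobil$, matching them against the $\nearrow$- and $\swarrow$-branch counts on one side and the bi-degrees on the other. First, for $I = (T,T') \in \tamint_n$ and $M = \treestobil(I) \in \mtree_n$, I would observe that by the construction in Section~\ref{sec:phi}, the upper half of $M$ is exactly the diagram-drawing $\hat{T'}$ of $T'$, and the lower half is the half-turn of the diagram-drawing of $\mir(T)$. By Remark~\ref{rem:one-to-one} and the identification (made just after Definition~\ref{defn:tree-diagram-drawing}) of $\dne(T')_k$ with the number of arcs of $\hat{T'}$ incident to the black point at $x=k$, one obtains that the number of upper arcs of $M$ at that black point equals $\dne(T')_k$. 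Symmetrically, the number of lower arcs at that black point equals $\dne(\mir(T))_k = \dsw(T)_k$. Hence the $k$th diagonal of $I$ has bi-length $(i,j)$ exactly when the black point at $x=k$ of $M$ is incident to $i$ upper (blue) arcs and $j$ lower (red) arcs.

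Next, by Definition~\ref{defn:bilabel-to-blossoming}, the mapping $\bijbilblom$ preserves the colors of arcs while turning them into half-edges of plain edges, and merely inserts two buds (left and right) at each black point. The buds do not contribute to the bi-degree, which only counts colored half-edges of plain edges. Therefore the node of $B = \bijbilblom(M)$ corresponding to the black point at $x=k$ has bi-degree exactly $(\dne(T')_k, \dsw(T)_k)$, which is the bi-length of the $k$th diagonal of $I$. Summing over $k \in [0\isep n]$ yields the claimed equality between the number of diagonals of bi-length $(i,j)$ in $I$ and the number of nodes of bi-degree $(i,j)$ in $\inttoblom(I)$. There is no real obstacle here: the lemma is essentially a bookkeeping statement, and the only point requiring attention is the careful identification of the upper/lower halves of the meandering tree with the diagram-drawings of $T'$ and $\mir(T)$, a fact already established in Section~\ref{sec:phi}.
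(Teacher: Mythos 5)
Your proof is correct and takes essentially the same approach as the paper: you trace the count of upper (blue) and lower (red) arcs at each black point through $\treestobil$ using the identification of the two halves of the meandering tree with the diagram-drawings of $T'$ and $\mir(T)$, then observe that $\bijbilblom$ preserves colors while only adding buds. The only minor quibbles are that \Cref{rem:one-to-one} concerns arcs of the smooth drawing rather than the diagram-drawing (the relevant fact is really the definition of $\dne$ following \Cref{defn:tree-diagram-drawing}), and that the equality of the lower-arc count with $\dne(\mir(T))_k$ implicitly absorbs the half-turn reindexing, but this matches the paper's own convention $\dsw(T)=\dne(\mir(T))$ and does not affect the substance.
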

\begin{proof}
 From~\Cref{defn:tree-diagram-drawing}, we see that the number of arcs going to the right from a black point at $x = k$ in the diagram-drawing of $T$ is $\dne(T)_k$. By~\Cref{defn:trees-to-bilabel},~\Cref{defn:bilabel-to-blossoming} and~\Cref{theo:main}, this is also the number of blue half-edges incident to the corresponding node in $\inttoblom(I)$. Similarly  $\dsw(T)_k$ is equal to the number of red half-edges incident to the same node. 
\end{proof}

We now present the needed notation for a useful corollary of~\Cref{lem:bi-lengths}. In a binary tree, we say that a leaf is of \tdef{canopy type} (or simply \tdef{type}) $1$ (resp. $0$) if it is the left child (resp. right child) of its parent. The \tdef{canopy-vector} of a binary tree $T \in \bintree_n$ is the word $\can(T) \in \{0,1\}^{n+1}$ given by the types of the leaves from left to right. We note that $\can(T)$ can be obtained from the bracket-vector $\bvec(T)$ of $T$ as the pattern of zero and non-zero entries. For $(T,T')$ a Tamari interval, the absence of flawed pair implies that the joint canopy type at every position $i \in [0, n]$ is either $\canoo,\canzz$ or $\canoz$.

\begin{rem}
 We note that our definition of canopy is slightly different from that from \cite{previlleratelleviennot17enumeration}, in which the first and the last leaves are ignored, as their types are fixed.
\end{rem}

In a blossoming tree, we say that a node is of \tdef{type} $\canoo$ (resp. $\canzz$) if it has only blue (resp. red) incident half-edges, and is of type $\canoz$ otherwise. Nodes of the first two types are called \tdef{synchronized}, and it is equivalent to their buds being side by side. 

\begin{coro}\label{coro:triple}
 For $I = (T, T')$ a Tamari interval, the number of canopy entries of type $\canoo$ (resp. $\canzz, \canoz$) in $I$ is equal to the number of nodes of type $\canoo$ (resp. $\canzz, \canoz$) in $\inttoblom(I)$. 
\end{coro}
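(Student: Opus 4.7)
My plan is to deduce the corollary directly from~\Cref{lem:bi-lengths} by re-expressing each canopy type as a pair of positivity conditions on the bi-length vector $(\dne(T')_k,\dsw(T)_k)$. The starting observation is that, for any binary tree $S$, the leaf at abscissa $k$ is a left child of its parent iff $\dne(S)_k>0$, and a right child iff $\dsw(S)_k>0$; these two conditions are mutually exclusive and exhaustive. Applied to $T$ and $T'$ in $I=(T,T')$, this shows that the canopy of $T'$ at position $k$ equals $1$ iff $\dne(T')_k>0$, and the canopy of $T$ at position $k$ equals $0$ iff $\dsw(T)_k>0$.

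I would then translate the three canopy types into positivity patterns: $\canoo$ (left in both) corresponds to $\dne(T')_k>0$ and $\dsw(T)_k=0$, $\canzz$ (right in both) to $\dne(T')_k=0$ and $\dsw(T)_k>0$, and $\canoz$ (left in $T'$, right in $T$) to $\dne(T')_k>0$ and $\dsw(T)_k>0$. The fourth pattern $\dne(T')_k=\dsw(T)_k=0$ is the forbidden canopy type excluded by the absence of flawed pairs (\Cref{lem:flawed_canonical}), so the three cases above exhaust the positions $k\in[0\isep n]$. On the blossoming-tree side, the three node types translate into the same trichotomy for the bi-degree $(i,j)$: only blue ($i>0,\ j=0$) gives $\canoo$, only red ($i=0,\ j>0$) gives $\canzz$, and mixed ($i>0,\ j>0$) gives $\canoz$; here bi-degree $(0,0)$ cannot occur for $n\geq 1$ since, for the underlying tree to be connected, every node must be incident to at least one plain edge.

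These two trichotomies coincide term by term: by~\Cref{lem:bi-lengths}, the node at position $k$ in $\inttoblom(I)$ has bi-degree $(\dne(T')_k,\dsw(T)_k)$, and so each of the three types of canopy entries is matched with the corresponding type of node; summing over $k$ yields the claim. I do not anticipate any real obstacle here beyond careful bookkeeping of conventions, in particular verifying that the forbidden canopy configuration on the Tamari side and the impossible bi-degree $(0,0)$ on the blossoming-tree side correspond to the same absent case.
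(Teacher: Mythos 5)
Your proof is correct and follows essentially the same route as the paper: apply \Cref{lem:bi-lengths} and translate the bi-length $(\dne(T')_k,\dsw(T)_k)$ (resp.\ the bi-degree) into the canopy (resp.\ node) type via the positivity of each component. The paper's proof is simply a terser phrasing of your trichotomy argument, with the exclusion of the fourth pattern already noted in the discussion just before the corollary.
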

\begin{proof}
 We apply~\Cref{lem:bi-lengths}, then observe that we obtain the type of a canopy entry of $I$ (resp. the type of a node in $\inttoblom(I)$) by replacing non-zero values in the corresponding bi-length (resp. bi-degree) by $1$ for the first component, and by replacing such values by $0$ (and $0$ by $1$) for the second component.
\end{proof}

\subsection{Commutation with duality of intervals}\label{sec:duality}

For $I = (T,T') \in \tamint_n$, by abuse of notation, we define its \tdef{mirror} $\mir(I)$ to be $(\mir(T'),\mir(T))$. It is known that $\mir(I)$ is a Tamari interval, also called the \tdef{dual} of $I$ \cite{previlleratelleviennot17enumeration}. On the other hand, for $B \in \blossoming_n$, the \tdef{dual} of $B$, denoted by $\dual(B)$, is the tree obtained from $B$ by switching the colors of half-edges. We then have the following property. 

\begin{lemma} \label{lem:duality-commutation}
 For $I \in \tamint_n$ and $B = \inttoblom(I)$, we have $\dual(B) = \inttoblom(\mir(I))$.
\end{lemma}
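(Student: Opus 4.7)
The plan is to establish the identity by tracking how $\mir$ interacts with each factor of the composition $\inttoblom = \bijbilblom \circ \treestobil$. First, I would show that the smooth drawing of $\mir(I)$ is obtained from that of $I = (T, T')$ by a half-turn rotation, with blue and red exchanged. Indeed, the smooth drawing of $\mir(I) = (\mir(T'), \mir(T))$ places the canonical smooth drawing of $\mir(T)$ above (as blue) and the mirror smooth drawing of $\mir(T')$, i.e., the canonical smooth drawing of $T'$ rotated by a half-turn, below (as red); this is precisely the half-turn of the smooth drawing of $I$ followed by the color swap exchanging blue and red. Since the local operation defining $\treestobil$ (\Cref{fig:local_op_phi}) is itself invariant under half-turn combined with color exchange, it follows that $\treestobil(\mir(I))$ is the half-turn with color swap applied to $\treestobil(I)$.

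Next, I would verify that $\bijbilblom$ converts this half-turn with color swap into $\dual$. At each black point $b$ of a meandering tree $M$, the map $\bijbilblom$ attaches left and right buds along the $x$-axis, so the counterclockwise cyclic order around $b$ reads: right bud, upper blue half-edges, left bud, lower red half-edges. A half-turn preserves the orientation of the plane and hence this cyclic order around the image of $b$, though it swaps every attribute (right $\leftrightarrow$ left, upper $\leftrightarrow$ lower); the subsequent color swap restores the ``upper blue, lower red'' convention. Therefore the underlying unrooted bicolored plane tree has the same cyclic data at each node, while every half-edge has its color exchanged, which is exactly the effect of $\dual$. Applying $\bijbilblom$ thus sends the half-turn with color swap of $M$ to $\dual(\bijbilblom(M))$.

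Putting the two steps together gives $\inttoblom(\mir(I)) = \bijbilblom(\treestobil(\mir(I))) = \dual(\bijbilblom(\treestobil(I))) = \dual(B)$, as claimed. The main obstacle will be the careful bookkeeping in the second step: one must distinguish the geometric action of the half-turn (which reverses the left-to-right order of black points and flips upper/lower) from its combinatorial effect on the unrooted plane tree, which is trivial because the half-turn is orientation-preserving. Once this is accepted, only the color swap on arcs survives after $\bijbilblom$, and it is precisely the half-edge color swap that defines $\dual$.
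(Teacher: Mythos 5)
Your proof is correct and follows essentially the same route as the paper: factor $\inttoblom = \bijbilblom \circ \treestobil$, note that $\treestobil(\mir(I))$ is the half-turn of $\treestobil(I)$ (with the colors reassigned by position, i.e.\ a color swap), and then observe that $\bijbilblom$ turns this half-turn-with-color-swap into $\dual$. Your second step spells out more explicitly than the paper why the orientation-preserving half-turn leaves the underlying plane tree unchanged while the color swap becomes $\dual$, which is a helpful clarification but not a different argument.
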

\begin{proof}
 We recall from~\Cref{theo:main} that $\inttoblom = \bijbilblom \circ \treestobil$. Note that the canonical (resp. smooth) drawing of $\mir(I)$ is the half-turn of the canonical (resp. smooth) drawing of $I$. 
 From this observation we get the property that $\treestobil(\mir(I))$ is $\treestobil(I)$ rotated by a half-turn. Indeed, the upper part of $\treestobil(\mir(I))$ is the diagram-drawing of $\mir(T)$, which is exactly the half-turn of the lower part of $I$; and a similar equivalence also holds for the lower part of $\treestobil(\mir(I))$. Then the mapping  $\bijbilblom$ is  such that  
 $\inttoblom(\mir(I))$ is obtained from $B=\inttoblom(I)$ by exchanging red and blue, which is exactly $\dual(B)$.
\end{proof}

\subsection{Specializations}\label{sec:specialize}

In this section, we consider several special families of Tamari intervals: synchronized, modern, modern-synchronized, infinitely modern, and Kreweras. We show that for each of these families the corresponding blossoming trees have a simple characterization.

\subsubsection{Synchronized intervals}

A Tamari interval $(T,T')$ is called \tdef{synchronized} if $\can(T)=\can(T')$. These were defined in~\cite{previlleratelleviennot17enumeration} in connection to $\nu$-Tamari-lattices (\textit{generalized Tamari lattices} therein), which are intervals of the Tamari lattice formed by elements with a fixed canopy-vector $\nu$. A synchronized interval thus corresponds to some interval in some $\nu$-Tamari lattice.

We say that a blossoming tree is \tdef{synchronized} if it has no node of type $\canoz$. Thus, at each node, the two incident buds are consecutive. We have the following as a special case of~\Cref{coro:triple}.

\begin{coro}\label{coro:sync}
 A Tamari interval $I = (T,T')$ is synchronized if and only if $\inttoblom(I)$ is synchronized. \end{coro}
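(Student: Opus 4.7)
The plan is to deduce this corollary directly from \Cref{coro:triple}. The starting point will be an observation already recorded in the text just before \Cref{coro:triple}: for any Tamari interval $(T,T')$, the absence of flawed pairs forces the joint canopy type at every position to lie in $\{\canzz,\canoo,\canoz\}$, the fourth combination (the one with the canopies disagreeing in the opposite direction) being ruled out. Consequently, $\can(T) = \can(T')$ holds if and only if no position has joint canopy type $\canoz$.

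Next, I will invoke \Cref{coro:triple}, which asserts that the number of positions of joint canopy type $\canoz$ in $I$ equals the number of nodes of type $\canoz$ in $\inttoblom(I)$. Combining this with the previous step, $I$ is synchronized if and only if $\inttoblom(I)$ has no node of type $\canoz$, which is by definition exactly the condition for $\inttoblom(I)$ to be a synchronized blossoming tree.

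Since the real parameter transfer has already been carried out in \Cref{lem:bi-lengths} and \Cref{coro:triple}, this specialization presents no serious obstacle; the only conceptual step is the translation from the canopy-equality condition $\can(T) = \can(T')$ to the vanishing of the $\canoz$-count, which is immediate from the no-flawed-pair property of Tamari intervals.
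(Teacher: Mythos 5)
Your proposal is correct and follows essentially the same route as the paper: the paper's proof simply states that the corollary follows from \Cref{coro:triple} together with the definitions, and you have merely unpacked the implicit step — namely that the no-flawed-pair property restricts joint canopy types to $\{\canzz,\canoo,\canoz\}$, so $\can(T)=\can(T')$ is equivalent to the vanishing of the $\canoz$-count, which then transfers via \Cref{coro:triple}.
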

\begin{proof}
 This follows directly from \ref{coro:triple} and the definition of synchronized Tamari intervals and synchronized blossoming trees.
\end{proof}

\begin{figure}
\begin{center}
\includegraphics[width=\linewidth]{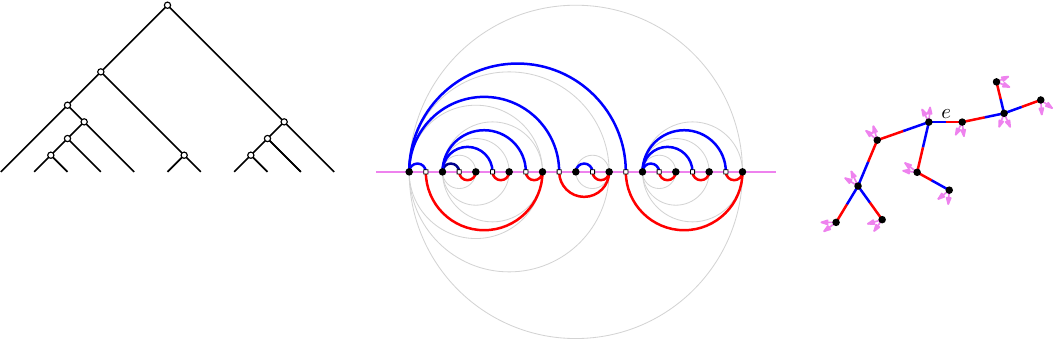}
\end{center}
\caption{A binary tree $T$, the image of $I = (T, T)$ under $\treestobil$ (meandering representation, superimposed with the smooth drawing of $I$), and the blossoming tree $\inttoblom(I)$, with $e$ the edge corresponding to the root of $T$.}
\label{fig:trivial}
\end{figure}

\begin{rem}\label{rem:trivial}
 We call Tamari intervals of the form $I = (T, T)$ \tdef{trivial intervals}, and they are synchronized. As illustrated in~\Cref{fig:trivial}, in this case, the upper representation of $\treestobil(I)$ (in the sense of~\Cref{fig:local_op_int_poset_tree}) coincides with the smooth drawing of $T$, as can be checked by induction following the binary decomposition of $T$. Moreover, for the blossoming tree $\inttoblom(I)$, let $e$ be its edge associated to the root-edge of $T$. Then, for every vertex $v\in\inttoblom(I)$, letting $e_v$
 be the incident edge at $v$ `towards' $e$, the two buds at $v$ come just after $e_v$ in \ccw-order around $v$, see~\Cref{fig:trivial} right.
\end{rem}

\subsubsection{Modern intervals}
The \tdef{rise} of a pair $I = (T,T')\in\pair_n$ is defined as $\rise(I) = ((T, \epsilon), (\epsilon, T'))$, as illustrated in~\Cref{fig:rise}. A Tamari interval $(T,T')$ is \tdef{modern} if its rise is also a Tamari interval.

\begin{figure}
\begin{center}
\includegraphics[width=12cm]{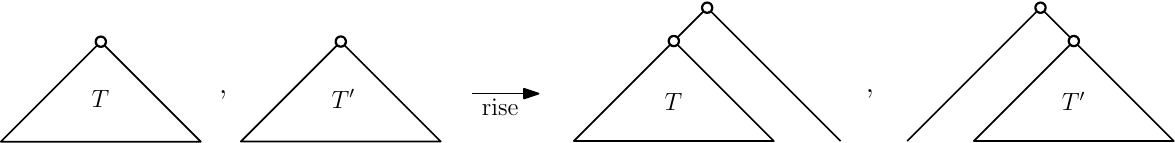}
\end{center}
\caption{The rise of a pair $(T,T') \in \pair_n$.}
\label{fig:rise}
\end{figure}

In a blossoming tree, a plain edge $e = (u, v)$ is \tdef{non-modern} if the edge following~$e$ in \cw-order around $u$ and the edge following $e$ in \cw-order around $v$ are both plain edges. A blossoming tree is called \tdef{modern} if it has no non-modern edge. In other words, a blossoming tree is modern if it avoids the ``Z'' pattern, where all three lines are plain edges, and there are no buds inside the two acute corners.

\begin{lemma}\label{lem:adjacent}
 Let $B$ be a bicolored blossoming tree, with $M = \bijblombil(B)$ the corresponding meandering tree. For $e = (u,v)$ a plain edge of $B$, in $M$ let $b_u$ and $b_v$ be the black points for $u$ and $v$ respectively, and $w$ the white point for $e$. Then, $e$ in $B$ is followed by a bud in \cw-order around $u$ (resp. $v$) if and only if $b_u$ (resp. $b_v$) is next to $w$ on the horizontal axis. 
\end{lemma}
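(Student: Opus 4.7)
By construction of $\bijblombil$ via the meandric path, $b_u$ being next to $w$ on the horizontal axis is equivalent to the existence of a closure-edge between $u$ and the edge-vertex $v_e$ in $\oB$. Since closure-edges arise from the planar matching of buds with legs along the \ccw-contour of $B$ (which is just the bracket matching of the associated parenthesis word: each bud is an opening paren, each leg a closing paren), the claim reduces to showing that a bud at $u$ is matched with one of the two legs $\ell_1,\ell_2$ attached at $v_e$ if and only if $e$ is followed by a bud in \cw-order around $u$.

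Fix the convention that $\ell_1$ is the leg at $v_e$ encountered by the contour right after it enters $v_e$ from $u$, and $\ell_2$ the one encountered upon returning from the $v$-side. A direct count (each node of $B$ carries exactly $2$ buds while each plain edge contributes $2$ legs at its edge-vertex) shows that any subtree of $B$ hanging from a given edge has exactly $2$ more buds than legs, so its contour-portion contributes net $+2$ to the stack. Applied to the $v$-side subtree, which is traversed between the visits of $\ell_1$ and $\ell_2$, this yields that the top of the stack at the moment $\ell_2$ is popped is a bud lying inside the $v$-subtree; hence $\ell_2$ never matches a bud at $u$. For $\ell_1$, the contour emits no paren between the moment it leaves $u$ along $e$ and the moment it reaches $v_e$, so $\ell_1$ matches whatever opening paren was on top of the stack just before $e$ was traversed at $u$. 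A case analysis on the \ccw-predecessor of $e$ at $u$ then closes the argument: if that predecessor is a bud $b$ at $u$, it was just pushed and $\ell_1$ matches $b$, producing the closure-edge between $u$ and $v_e$; if it is a plain edge $e'$, then the $e'$-subtree contributed $+2$, so the top is a bud inside that subtree, and $\ell_1$ matches no bud at $u$.

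Combining the two sub-cases, a closure-edge between $u$ and $v_e$ exists if and only if the \ccw-predecessor of $e$ at $u$ is a bud at $u$, equivalently, $e$ is followed by a bud in \cw-order around $u$. The statement for $v$ follows by running the symmetric analysis from the $v$-side: the leg playing the role of $\ell_1$ is then $\ell_2$ (the first leg encountered when the contour reaches $v_e$ coming from $v$), and the relevant subtree with net $+2$ contribution is the $u$-side one. The main delicate point is to fix an orientation convention at $v_e$ ensuring that the two parallel analyses (from $u$ and from $v$) do target the two distinct legs $\ell_1,\ell_2$; once this bookkeeping is in place, everything else is a routine tracking of the parenthesis word.
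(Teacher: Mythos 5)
Your proof is correct and, in the forward direction, is exactly the paper's argument: if the \cw-successor of $e$ at $u$ is a bud, the \ccw-contour emits no symbol between that bud and the first leg of $v_e$, so they match and produce the closure-edge making $b_u$ and $w$ consecutive on the meandric path (this is the ``short pair'' observation used in the proof of Lemma~\ref{lem:meandering_cycle}). Where you diverge is the converse. The paper dispatches it in one line by appealing to the fact that $\gamma$ is the inverse of $\bijblombil$: if $b_u$ and $w$ are adjacent on the axis, then by construction of $\gamma$ the bud inserted at $b_u$ towards $w$ is precisely the \cw-successor of the corresponding plain edge. You instead remain inside the closure and argue directly on the parenthesis word: the $v$-side subtree contributes a net $+2$ to the stack, so the symbol popped by $\ell_2$ was pushed during that subtree's traversal and hence is a bud of the $v$-subtree, never a bud at $u$; and $\ell_1$ matches a bud at $u$ exactly when the \ccw-predecessor of $e$ at $u$ is a bud, because otherwise that predecessor is a plain edge whose subtree also contributes net $+2$. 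This is a somewhat longer but genuinely self-contained route that does not presuppose Proposition~\ref{prop:gamma-bijection}. The ``orientation convention'' worry you raise at the end is not actually an issue: since the \ccw-contour is a single fixed cyclic tour, $\ell_1$ is unambiguously the leg reached right after traversing $e$ from $u$, and $\ell_2$ the leg reached right before traversing $e$ back to $u$; those are distinct, and the two symmetric analyses target them as you expect.
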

\begin{proof}
 If the next edge after $e$ in \cw-order around $u$ is a bud, then when doing the closure of $B$, this bud is matched with the leg at $e$ on the same side. Hence, $b_u$ and $w$ are consecutive on the meandric path, thus also on the horizontal axis in $M$. Conversely, if $b_u$ and $w$ are consecutive on the horizontal axis in $\hG$, then by definition of $\gamma$ (see also~\Cref{fig:opening}), the bud at $b_u$ pointing toward $w$ corresponds to the next edge after $e$ in \cw-order around $u$ in $B$.
\end{proof}

\begin{figure}
\begin{center}
\includegraphics[width=10cm]{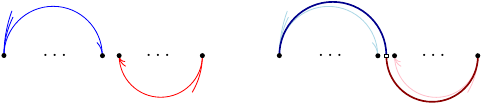}
\end{center}
\caption{For $I$ a Tamari interval, and for $M = \treestobil(I)$, the 1-gaps of $I$ correspond to the white points $w$ of $M$ whose incident arcs do not link $w$ to the two black vertices next to it on the axis.}
\label{fig:transfer_1gap}
\end{figure}

\begin{lemma}\label{lem:modern}
A Tamari interval $I = (T, T')$ is modern if and only if $\inttoblom(I)$ is modern. 
\end{lemma}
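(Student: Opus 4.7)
The plan is to translate both sides of the equivalence into the same condition on the bracket-vector coordinates of $T$ and $T'$, and then verify that the two translations agree. Write $\bvec(T) = (\alpha_1, \ldots, \alpha_n)$, $\bvec(T') = (\alpha'_1, \ldots, \alpha'_n)$, and $\dualbvec(T') = (\beta'_1, \ldots, \beta'_n)$.

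First I would translate the modernity of $B = \inttoblom(I)$. By~\Cref{lem:adjacent}, the plain edge $e$ of $B$ associated to the white point $w = t - \tfrac{1}{2}$ in $M = \treestobil(I)$ is non-modern iff neither of the two arcs incident to $w$ reaches a black point adjacent to $w$ on the horizontal axis. By the remark following~\Cref{prop:phi}, the upper arc at $w$ lands at $t - 1 - \beta'_t$ and the lower arc at $w$ lands at $t + \alpha_t$; adjacency to $w$ is therefore equivalent to $\beta'_t = 0$ on the upper side and $\alpha_t = 0$ on the lower side. Hence $B$ is modern iff $\alpha_t \beta'_t = 0$ for every $t \in [n]$.

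Next I would translate the modernity of $I$. By definition, $I$ is modern iff $\rise(I) = ((T,\epsilon),(\epsilon,T')) \in \tamint_{n+1}$. A direct infix-order computation yields
$\bvec((T,\epsilon)) = (\alpha_1, \ldots, \alpha_n, 0)$ and $\bvec((\epsilon,T')) = (n, \alpha'_1, \ldots, \alpha'_n)$, so the bracket-vector characterization of Tamari intervals reduces (the entries at positions $1$ and $n+1$ being automatically valid) to the requirement that $\alpha_i \leq \alpha'_{i-1}$ for every $i \in [2 \isep n]$.

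The last step is to show, assuming $(T,T') \in \tamint_n$, that the inequalities $\alpha_i \leq \alpha'_{i-1}$ for all $i \in [2\isep n]$ are equivalent to $\alpha_t \beta'_t = 0$ for all $t \in [n]$. For the forward direction, if $\alpha_t > 0$ and $\beta'_t > 0$ then $t \in [2\isep n-1]$ and, since $\beta'_t \geq 1$, node $t-1$ sits in the left subtree of $t$ in $T'$; hence every descendant of $t-1$ in $T'$ has label $\leq t-1$, forcing $\alpha'_{t-1} = 0 < \alpha_t$, which violates the inequality at index $t$. For the converse, fix $i \in [2\isep n]$ with $\alpha_i > 0$, so the hypothesis gives $\beta'_i = 0$. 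A short argument on the least common ancestor $v$ of $i-1$ and $i$ in $T'$ shows that $v = i-1$: the case $v=i$ is excluded by $\beta'_i = 0$ (it would put $i-1$ in the empty left subtree of $i$), and any other $v$ would require its infix label to lie strictly between $i-1$ and $i$, which is impossible. Therefore $i$ lies in the right subtree of $i-1$ in $T'$, so $T'_i \subseteq T'_{i-1,R}$, giving $\alpha'_{i-1} \geq |T'_i| = 1 + \alpha'_i \geq 1 + \alpha_i$.

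The main obstacle should be this LCA argument: it is the only genuinely structural step, while everything else reduces to direct bookkeeping once the bracket-vector translations are in place.
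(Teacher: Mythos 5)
Your proof is correct, and it takes a genuinely different route from the paper's. The paper argues geometrically: it invokes the flawed-pair characterization of Tamari intervals (\Cref{lem:flawed_canonical}), observes how the smooth drawing of $\rise(I)$ is obtained from that of $I$, and reduces modernity of $I$ to the absence of ``1-gaps'' in the smooth drawing, which are then matched to non-modern edges of $\inttoblom(I)$ directly via \Cref{lem:adjacent}. You instead translate \emph{both} sides into bracket-vector conditions: modernity of the blossoming tree becomes $\bvec(T)_t \cdot \dualbvec(T')_t = 0$ for all $t$ (using \Cref{lem:adjacent} together with the formula for arcs of $\treestobil$ in the remark after \Cref{prop:phi}), while modernity of the interval becomes, after computing $\bvec((T,\epsilon))$ and $\bvec((\epsilon,T'))$ and discarding the two automatic inequalities, $\bvec(T)_i \leq \bvec(T')_{i-1}$ for $i\in[2\isep n]$. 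The equivalence of these two numeric conditions (given $T\leq T'$) is then proved by a short least-common-ancestor argument in $T'$, which is where the real content lies, and it is sound: for consecutive labels $i-1,i$ the LCA must itself carry one of these two labels, and the case $v=i$ is ruled out by $\dualbvec(T')_i=0$. Your approach is more algebraic and avoids appealing to \Cref{lem:flawed_canonical} and the 1-gap picture altogether, at the cost of a slightly more intricate local argument; the paper's version is more visual and lets the 1-to-1 correspondence between 1-gaps and non-modern edges fall out of \Cref{lem:adjacent} with almost no extra work.
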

\begin{proof}
 One possible approach is via interval-posets, where a forbidden pattern for modern intervals is obtained in~\cite{rognerud18}. Here we argue via the characterization of Tamari intervals of~\Cref{lem:flawed_canonical}, that is, a pair $X = (T, T')$ is a Tamari interval if and only if the smooth drawing of $X$ has no flawed pair.
 
 We observe that the smooth drawing of $(\epsilon, T')$ is obtained from that of $T'$ by adding a point to the left and linking it to the rightmost point by an arc. Hence, the smooth drawing of $\rise(I)$ is obtained from that of $I$ by shifting the upper part by one unit to the right and adding an arc linking the leftmost and the rightmost points in both the upper and the lower part.

 Now, $I$ is modern if and only if $\rise(I)$ has no flawed pair in its smooth drawing. As $I$ is a Tamari interval, its smooth drawing has no flawed pair. Thus, the smooth drawing of $\rise(I)$ has no flawed pair if and only if the smooth drawing of $I$ has no ``1-separated pair", i.e., a pair $a, a'$ of lower and upper arc such that the right end of $a'$ is one unit to the left of the left end of $a$. In this case, the offending unit segment is called a \emph{1-gap} of $I$. Indeed, such a pair becomes a flawed pair in $\rise(I)$, with $x_r'=x_\ell$, and any other flawed pair in $\rise(I)$ would have to come from a flawed pair in $I$.
 
 Then, as shown in~\Cref{fig:transfer_1gap}, the 1-gaps of $I$ correspond exactly to the edges $e$ of the underlying tree of $M = \treestobil(I)$ such that none of the two black extremities of $e$ is next to the white point of $e$. By~\Cref{lem:adjacent}, these are exactly the non-modern edges of $\inttoblom(I)$. The absence of 1-gap in $I$, which is equivalent to $I$ being modern, is thus equivalent to $\inttoblom(I)$ being modern.
\end{proof}

\begin{rem} \label{rem:new-intervals}
 It is shown in~\cite{rognerud18} that $\rise$ is a bijection between modern intervals in $\tamint_n$ and \emph{new intervals} in $\tamint_{n+1}$ introduced and counted by Chapoton in~\cite{chapoton06}.
\end{rem}

\subsubsection{Infinitely modern intervals}

Following~\cite{rognerud18}, an interval $I = (T,T') \in \tamint_n$ is called \tdef{infinitely modern} if $\rise^k(I)$ is a Tamari interval of size $n+k$ for all $k \geq 0$.

In a blossoming tree, a simple path $\pi = v_0, \ldots, v_k$ is called \tdef{non-modern} if the edge following $(v_0,v_1)$ in \cw-order around $v_0$ and the edge following $(v_{k-1},v_k)$ in \cw-order around $v_k$ are both plain edges. A blossoming tree is called \tdef{infinitely modern} if it has no non-modern path. In other words, a blossoming tree is infinitely modern if it avoids the long ``Z'' pattern, where the diagonal is a path of plain edges, the two horizontal lines are plain edges, and there are no buds inside the two acute corners.

\begin{figure}
\begin{center}
\includegraphics[width=\textwidth]{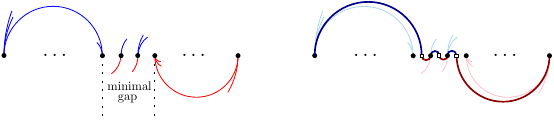}
\end{center}
\caption{Correspondence between minimal gaps of $I$ and the minimal non-modern paths of $B = \inttoblom(I)$.}
\label{fig:transfer_gap}
\end{figure}

\begin{lemma} \label{lem:infinitely-modern}
 A Tamari interval $I = (T,T')$ is infinitely modern if and only if $\inttoblom(I)$ is infinitely modern. 
\end{lemma}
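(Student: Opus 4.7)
The plan is to mimic the proof of \Cref{lem:modern}, but now iterating the rise operation. The first step is to describe the smooth drawing of $\rise^k(I)$: by a straightforward induction on $k$, it is obtained from the smooth drawing of $I$ by shifting the upper part $k$ units to the right and adding $k$ nested outer arcs to both the upper and the lower part. Consequently, $\rise^k(I)$ is a Tamari interval for all $k \geq 0$ if and only if $I$ contains no \emph{$j$-separated pair} for any $j \geq 1$, where a $j$-separated pair is a pair of lower arc $(x_\ell, x_r)$ and upper arc $(x_\ell', x_r')$ of $I$ with $x_\ell' < x_\ell$, $x_r' < x_r$, and $x_\ell - x_r' = j$. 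I would call the induced horizontal segment $[x_r', x_\ell]$ the associated \emph{$j$-gap} of $I$, so that $I$ is infinitely modern if and only if it has no gap.

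I would then refine to \emph{minimal gaps}, where $a' = (x_\ell', x_r')$ is the outermost upper arc ending at $x_r'$ and $a = (x_\ell, x_r)$ is the outermost lower arc starting at $x_\ell$, with no upper arc ending strictly between $x_r'$ and $x_\ell$, and no lower arc starting there. Since every gap contains a minimal one, $I$ has no gap iff it has no minimal gap. The key step is then the translation sketched in \Cref{fig:transfer_gap}: in $M = \treestobil(I)$, a minimal $k$-gap carves out a region containing $k$ white points and $k-1$ intermediate black points, and I would show that the arcs of $M$ within this region assemble into a simple path of length $k$ in the underlying tree, running from the black point at $x_r'$ to the black point at $x_\ell$. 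By the minimality of the gap together with \Cref{lem:adjacent}, the two extremal black points $x_r'$ and $x_\ell$ are \emph{not} adjacent on the axis to the white points of the first and last arcs of the path, so in $B = \inttoblom(I)$ the corresponding edges are followed by a plain edge in \cw-order around the outer vertices; the associated path in $B$ is therefore non-modern. Conversely, the same dictionary reads off a minimal $k$-gap from a minimal non-modern path in $B$. Combining the two directions yields: $I$ has no gap iff $B$ has no non-modern path, i.e., $I$ is infinitely modern iff $B$ is infinitely modern.

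The main obstacle I anticipate is proving that in a minimal gap the arcs of $M$ inside the gap truly assemble into a single simple path from $x_r'$ to $x_\ell$. Ruling out arcs that escape the gap region uses the absence of flawed pairs in $I$ (any escaping arc, paired with $a'$ or $a$, would yield a flawed pair in $I$), together with planarity; ruling out more complicated subgraph structure inside the gap uses that the underlying graph of $M$ is a tree by \Cref{lem:flawed_meandering}, together with a counting argument on the $k$ white points versus $k-1$ interior black points plus the two bounding black points. Once these two containment and connectivity facts are in place, the correspondence with non-modern paths becomes essentially an unpacking of the minimality condition through \Cref{lem:adjacent}.
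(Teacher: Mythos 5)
Your overall plan matches the paper's: reduce infinite modernity to the absence of separated pairs (gaps), pass to a suitable notion of \emph{minimal} gap, and set up a length-preserving correspondence with minimal non-modern paths of $\inttoblom(I)$ using~\Cref{lem:adjacent}. However, the key structural claim you isolate as ``the main obstacle'' is false, so the proposal as written has a genuine gap.

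Concretely, you claim that for a minimal $k$-gap $[x_r',x_\ell]$ the arcs of $M=\treestobil(I)$ attached to the $k$ white points inside the gap stay within $[x_r',x_\ell]$ and assemble into a path from the black point at $x_r'$ to the one at $x_\ell$, and that any escaping arc would create a flawed pair with $a$ or $a'$. Consider $T=T'=((\epsilon,\epsilon),(\epsilon,\epsilon))$, so $I=(T,T)$ is the trivial interval of size $3$. Its smooth drawing has upper and lower arcs $\{0{-}3,\,0{-}1,\,2{-}3\}$, giving a minimal $1$-gap $[1,2]$ (upper arc $0{-}1$ ending at $x_r'=1$, lower arc $2{-}3$ starting at $x_\ell=2$). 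In $M$ the only white point inside the gap is $3/2$, and its incident arcs are the upper arc $(0,3/2)$ and the lower arc $(3/2,3)$: both escape the gap, and neither pairing with $a$ or $a'$ yields a flawed pair. The corresponding tree edge of $M$ joins the black points $0$ and $3$, not $1$ and $2$. So the non-modern path here is the single edge $\{0,3\}$, whose endpoints lie \emph{outside} the gap.

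The correct picture (which the paper encodes in~\Cref{fig:transfer_gap}) is different: by minimality, the $k-1$ interior black points and the $k$ white points alternate consecutively along the axis and carry the \emph{interior} of a minimal non-modern path; the two end vertices of the path are the black endpoints reached by the two arcs that escape from the extremal white points $x_r'+\tfrac12$ and $x_\ell-\tfrac12$. The fact that these escaping arcs land strictly beyond the bounding segments (rather than on $x_r'$ or $x_\ell$) is precisely what makes the path non-modern, via~\Cref{lem:adjacent}. To repair the argument you would need to (i) drop the ``no escaping arcs'' claim and instead show escape happens only at the two extremal whites, (ii) show the interior blacks are each adjacent on the axis to both of their incident whites (this is where minimality of the gap enters), and (iii) identify the two end vertices of the path as the targets of the two escaping arcs, then apply~\Cref{lem:adjacent} to those (exterior) vertices rather than to the black points at $x_r'$ and $x_\ell$.
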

\begin{proof}
 Again, one may argue either via interval-posets (where a forbidden pattern for being associated to an infinitely modern interval is obtained in~\cite{rognerud18}), or, as done here, via the characterization of Tamari intervals of~\Cref{lem:flawed_canonical}.

 A \emph{separated pair} of $I$ is a pair $(a, a')$ of arcs in the smooth drawing of $I$, where $a$ is an upper arc and $a'$ a lower one, such that $a'$ is totally on the left of $a$. The segment between $a$ and $a'$ is called a \emph{gap} of $I$. By repeating the argument in the proof of~\Cref{lem:modern} $k$ times, a separated pair of $I$ with a gap of length $k$ yields a flawed pair of $\rise^k(I)$. Thus, as $I$ is a Tamari interval, $I$ is infinitely modern if and only if $I$ has no separated pair. We say that a gap is \emph{minimal} if it contains no other gap.
 
 On the other hand, for $B$ a bicolored blossoming tree, a non-modern path $\pi = v_0, \ldots, v_k$ is called \emph{minimal} if, for $i \in [k-1]$, the next edge after $(v_i,v_{i+1})$ (resp. after $(v_i,v_{i-1})$) in \cw-order around $v_i$ is a bud. Clearly, if $B$ has a non-modern path, then it has a minimal one, for instance a shortest one.
 
 Then, as shown in~\Cref{fig:transfer_gap}, and using~\Cref{lem:adjacent}, for $k\geq 1$ the minimal gaps of length~$k$ in $I$ are in 1-to-1 correspondence with the minimal non-modern paths of length $k$ in $\inttoblom(I)$. The absence of separated pairs in $I$ is thus equivalent to the absence of non-modern paths in $\inttoblom(I)$.
\end{proof}

\begin{rem}
For $k\geq 1$, a Tamari interval $I$ is called \emph{$k$-modern} if  $\mathrm{rise}^i(I)$ is a Tamari interval for all $i\in [k]$. 
The proof of~\Cref{lem:infinitely-modern} ensures that $I$ is $k$-modern if and only if $\Phi(I)$ has no non-modern path of length at most $k$. 
\end{rem}

\subsubsection{Kreweras intervals}
\label{subsub:kreweras}
The \tdef{Kreweras lattice} of size $n$ is the set $\noncross_n$ of non-crossing partitions of size $n$ endowed with the refinement order~\cite{Kre72}; the top (resp. bottom) element being the partition with a single block (resp. with $n$ blocks). There is a standard bijection $\iota$ between $\bintree_n$ and $\noncross_n$: for $T \in \bintree_n$, with nodes labeled by infix order, the associated non-crossing partition $\iota(T)$ is the partition of the nodes into right branches. 
\begin{rem}
If we consider the non-crossing partition given by the partition of the nodes of $T$ into left branches, then we obtain the Kreweras complement $\overline{\pi}$ of $\pi=\iota(T)$, see Figure~\ref{fig:complement_bij}.
\end{rem}

\begin{figure}
\begin{center}
\includegraphics[width=8cm]{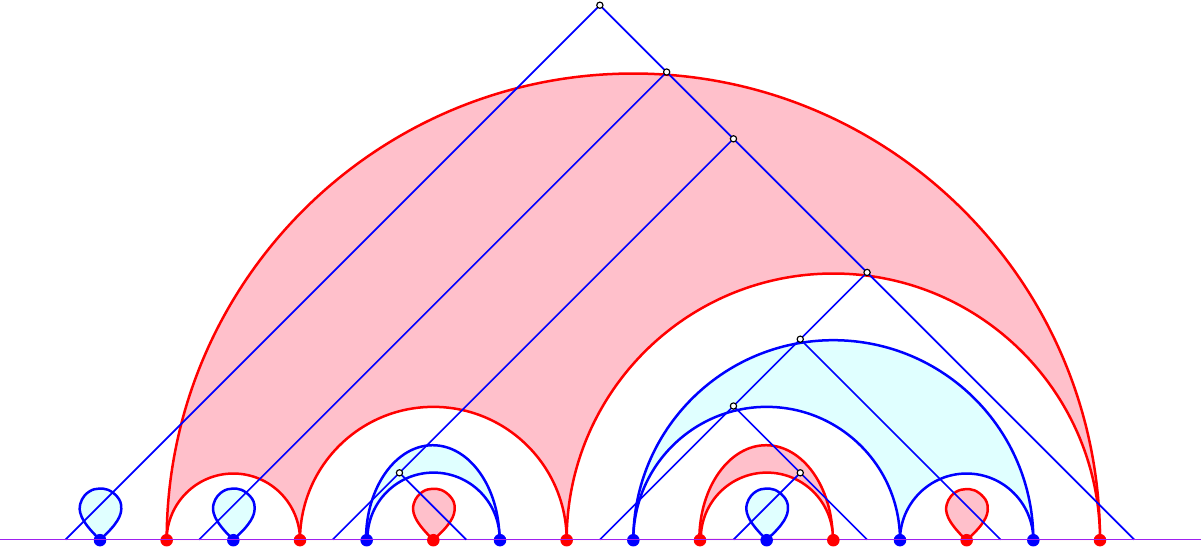}
\end{center}
\caption{A binary tree $T$ given by its canonical drawing, the associated non-crossing partition $\pi=\iota(T)$ in red, and its Kreweras complement $\overline{\pi}$ in blue, which also corresponds to the 
partition of the nodes into left branches.} 
\label{fig:complement_bij}
\end{figure}

By a slight abuse of notation, the pairs $(T,T')\in\pair_n$ that are mapped to Kreweras intervals by the mapping $\iota$ are called \tdef{Kreweras intervals}.

In a meandering diagram $M\in\mdiag_n$, a \tdef{non-Kreweras pair} is a pair made of a lower arc $a_\downarrow$ and an upper arc $a_\uparrow$ such that, for $x_{\ell}$ and $x_r$ (resp. $x_{\ell}'$ and $x_r'$) the abscissas of the left and the right end of the lower (resp. upper) arc, we have $x_{\ell}<x_{\ell}'< x_r<x_r'$, see Figure~\ref{fig:transfer_kreweras} left.

\begin{figure}
\begin{center}
\includegraphics[width=8cm]{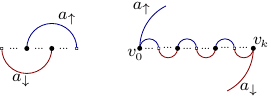}
\end{center}
\caption{(Left) A non-Kreweras pair of arcs in a meandering diagram. (Right) A minimal non-Kreweras path of a blossoming tree, in the meandering representation.}
\label{fig:transfer_kreweras}
\end{figure}

\begin{figure}
\begin{center}
\includegraphics[width=0.48\linewidth]{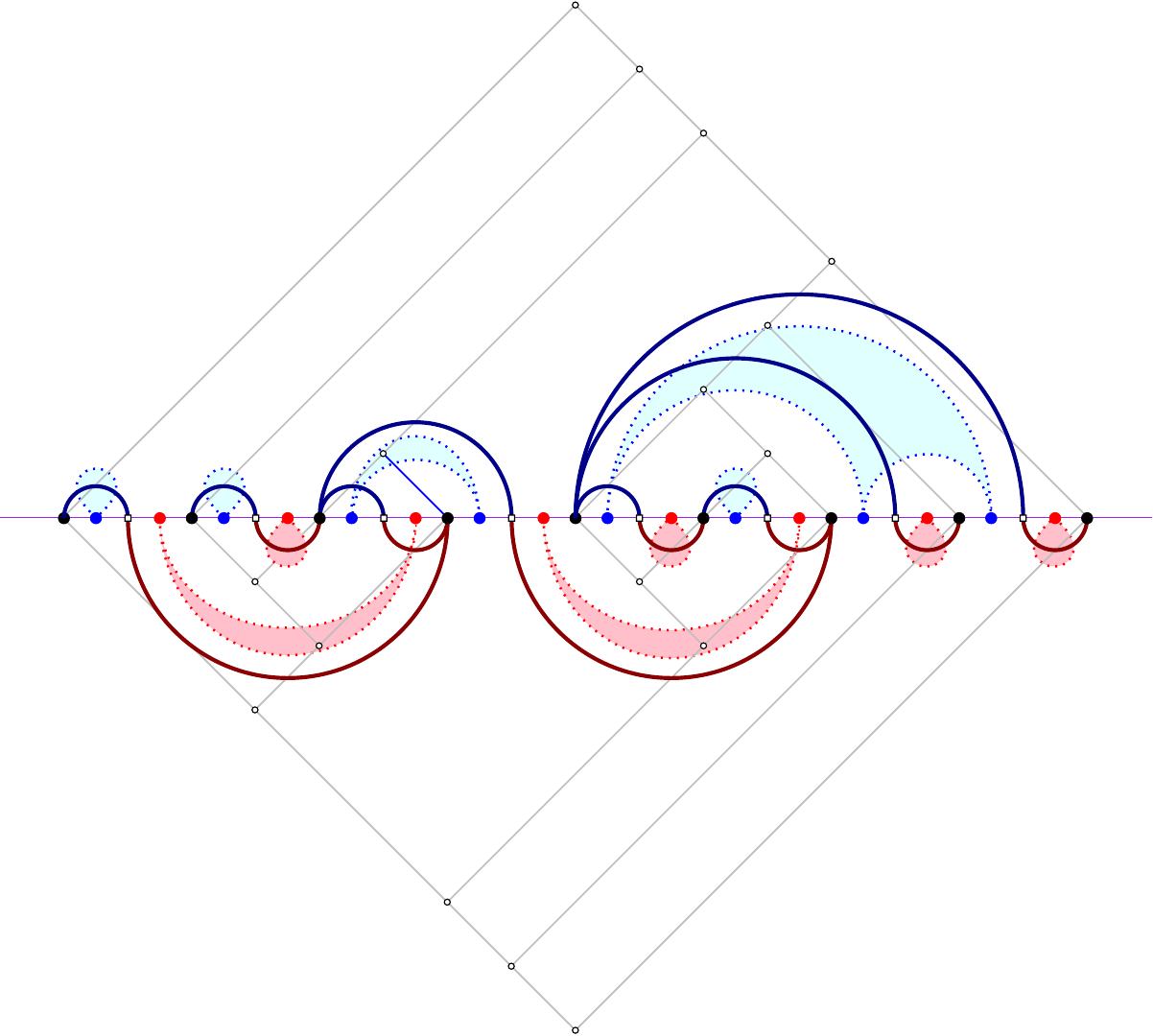}\hspace{.2cm}\includegraphics[width=0.48\linewidth]{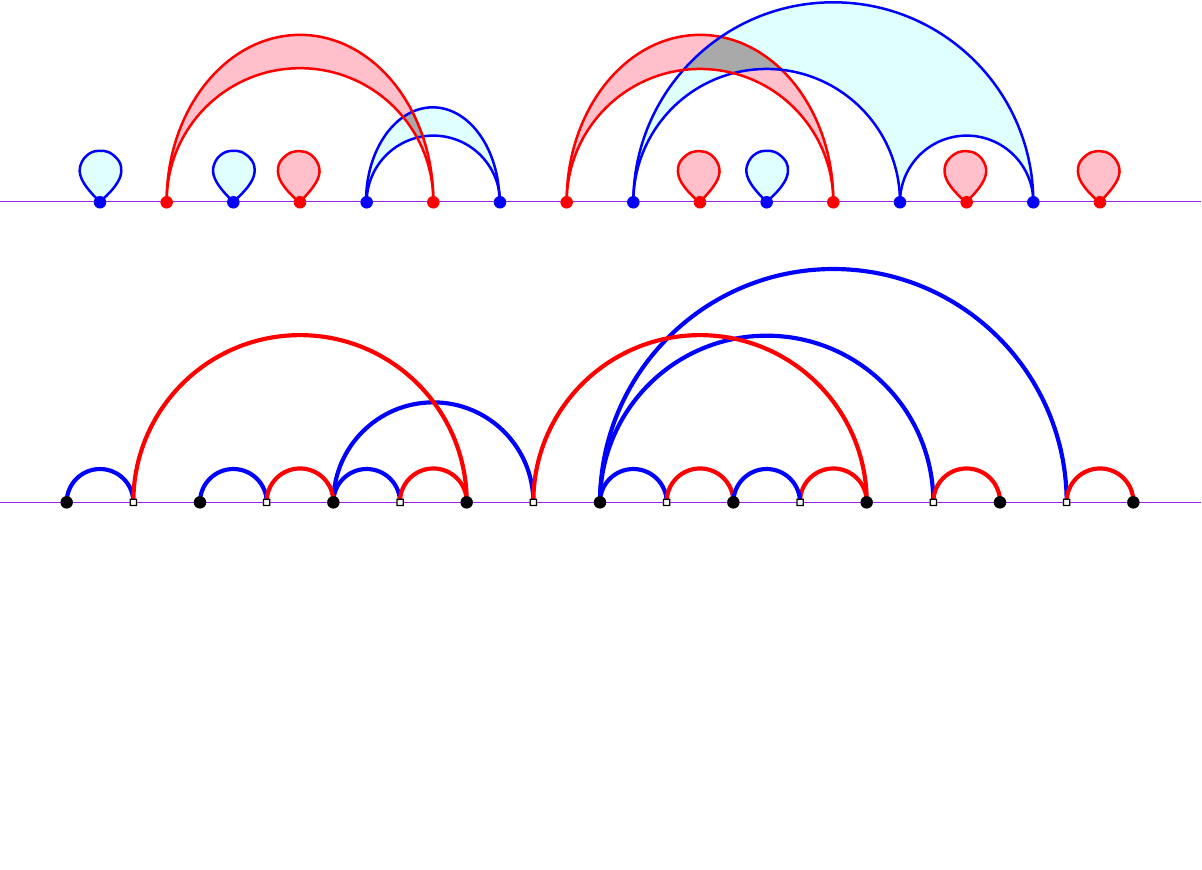}
\end{center}
\caption{Left: for the pair $X=(T,T')$ in the top row of~\Cref{fig:examples_phi}, with $\pi=\iota(T)$ and $\pi'=\iota(T')$, the drawing shows $M=\phi(X)$ superimposed with $\pi$ in the lower part, and with $\overline{\pi'}$ in the upper part. Right: the joint representation $J(\pi,\pi')$ above, and the upper representation $M^\uparrow$ below. The pair $(\pi,\pi')$ is not a Kreweras interval, as ensured by the presence of a crossing in $J(\pi,\pi')$, which is equivalent to the presence of a crossing in $M^\uparrow$.} 
\label{fig:kreweras_represent}
\end{figure}

\begin{figure}
\begin{center}
\includegraphics[width=0.55\linewidth]{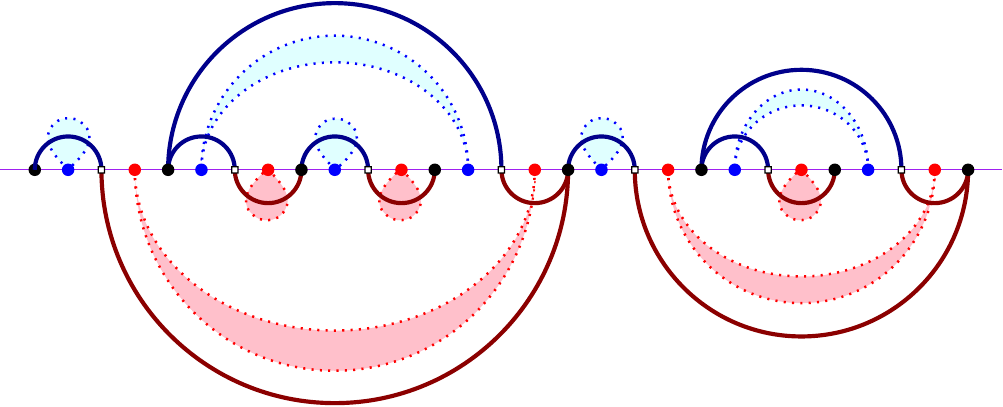}\hspace{.4cm}\includegraphics[width=0.41\linewidth]{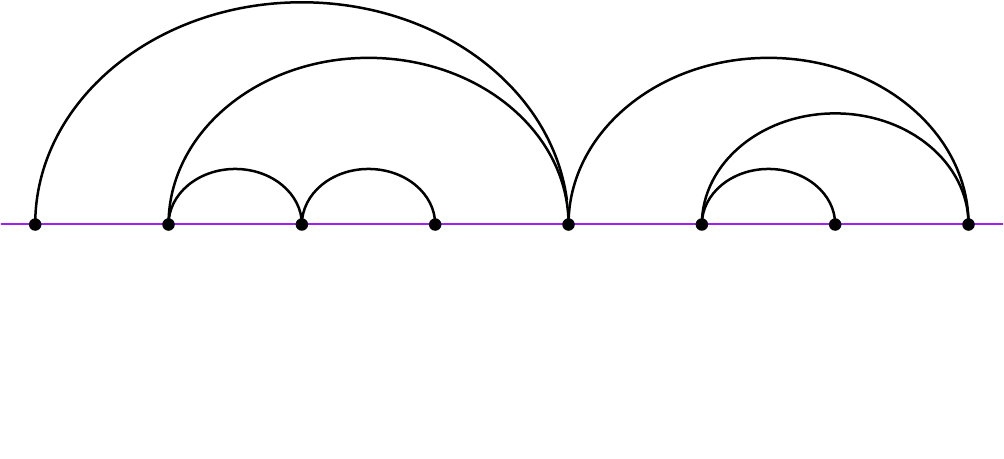}
\end{center}
\caption{Consider the Kreweras interval formed by the non-crossing partitions $\pi=\{\{1,4\},\{2\},\{3\},\{5,7\},\{6\}\}$ and $\pi'=\{\{1,4,5,7\},\{2,3\},\{6\}\}$. The left drawing shows $\pi$ in the lower part and the Kreweras complement
$\overline{\pi'}=\{\{1\},\{2,4\},\{3\},\{5\},\{6,7\}\}$ in the upper part, superimposed with the meandering tree $M$ associated to $(\pi,\pi')$. The right drawing shows the non-crossing tree induced by $M$.}
\label{fig:kreweras_noncrossing}
\end{figure}

\begin{lemma}\label{lem:meandering_krew}
A pair $X=(T,T')\in\pair_n$ is a Kreweras interval if and only if $M=\phi(X)$ is a meandering tree without non-Kreweras pairs.
\end{lemma}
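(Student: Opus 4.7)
The natural approach is through the interval-poset graph $G_P$ of $X=(T,T')$ introduced in Section~2.4.1: by~\cite{rognerud18} (cited just after~\Cref{fig:int_poset}), $X$ is Kreweras (identifying binary trees with non-crossing partitions via $\iota$) if and only if $G_P$ is a \emph{non-crossing} tree. Moreover, by Section~2.4.1, $M=\phi(X)$ is obtained from $G_P$ by the local operation of~\Cref{fig:local_op_int_poset_tree}: each $G_P$-edge of label $t$, which is an upper arc between integer positions $\alpha_t=a_t-1$ and $\beta_t=b_t$, is split into an upper arc from $\alpha_t$ to $t-\tfrac12$ and a lower arc from $t-\tfrac12$ to $\beta_t$, meeting at the new white point. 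Since~\Cref{prop:phi} already guarantees that $M\in\mtree_n$ whenever $X\in\tamint_n$, the lemma reduces to: for $X\in\tamint_n$, the graph $G_P$ is non-crossing if and only if $M$ has no non-Kreweras pair.

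For the implication ``non-Kreweras pair in $M$ $\Rightarrow$ crossing in $G_P$'' I would argue directly. A non-Kreweras pair consists of a lower arc $(x_\ell,x_r)$ and an upper arc $(x'_\ell,x'_r)$ with $x_\ell<x'_\ell<x_r<x'_r$. By the local operation, the lower arc comes from a $G_P$-edge $e_s$ of label $s=x_\ell+\tfrac12$ and integer endpoints $\alpha_s$ and $\beta_s=x_r$, with $\alpha_s\leq s-1<x_\ell$; the upper arc comes from a $G_P$-edge $e_t$ of label $t=x'_r+\tfrac12$ and integer endpoints $\alpha_t=x'_\ell$ and $\beta_t$, with $\beta_t\geq t>x'_r$. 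Combining these inequalities with $\alpha_t=x'_\ell<x_r=\beta_s$ yields $\alpha_s<\alpha_t<\beta_s<\beta_t$, a crossing of $G_P$.

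For the converse, given a $G_P$-crossing $(e_s,e_t)$ with $\alpha_s<\alpha_t<\beta_s<\beta_t$, the natural candidate for a non-Kreweras pair in $M$ consists of the lower arc of $e_s$ and the upper arc of $e_t$, that is, $(s-\tfrac12,\beta_s)$ and $(\alpha_t,t-\tfrac12)$; these form a non-Kreweras pair iff $s\leq\alpha_t$ and $t>\beta_s$. When the labels $s,t$ do not already satisfy these conditions, I would pick the crossing minimally---for instance with smallest width $\beta_s-\alpha_t$---and argue that any label $s\in(\alpha_t,\beta_s]$ or $t\in[\alpha_t,\beta_s)$ would allow replacing $e_s$ or $e_t$ by a $G_P$-edge whose associated segment lies strictly inside the crossing window, producing a strictly smaller crossing and contradicting minimality. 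The main obstacle lies in this last step: an arbitrary crossing of $G_P$ need not yield a non-Kreweras pair from its ``obvious'' contributed arcs, so one must control, via the tree structure of $G_P$ and a minimality or iterative reduction, which integer positions inside the crossing window can actually appear as labels of $G_P$-edges.
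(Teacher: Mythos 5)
Your route is genuinely different from the paper's. The paper proves this lemma directly, by showing that the Kreweras condition on $(\pi,\pi')=(\iota(T),\iota(T'))$ is equivalent to the joint representation $J(\pi,\pi')$ (superimposing $\pi$ with the Kreweras complement $\overline{\pi'}$) being crossing-free, and separately that the "upper representation" $M^{\uparrow}$ of $M$ (lower arcs flipped upward) is crossing-free if and only if $M$ is a meandering tree without non-Kreweras pairs; the two representations are then matched by a simple rule on attachment points. You instead pass through the interval-poset graph $G_P$ and the characterization from~\cite{rognerud18} that $X$ is Kreweras exactly when $G_P$ is non-crossing, and reduce the lemma to matching crossings of $G_P$ with non-Kreweras pairs of $M$. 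Both routes are legitimate; yours leans more heavily on~\cite{rognerud18} while the paper's is more self-contained. One small caveat in your setup: to justify restricting to $X\in\tamint_n$ you implicitly need that Kreweras intervals are Tamari intervals; the paper deduces this \emph{from} the present lemma in Remark~\ref{rk:non-crossing-tree}, so you must instead invoke the well-known fact directly (as cited there via~\cite{bernardi09}) to avoid circularity.

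The forward implication (non-Kreweras pair $\Rightarrow$ $G_P$-crossing) is correct and complete as you wrote it. The converse is where you stop, flagging a minimality argument as the main obstacle. This is a genuine gap in your write-up as it stands, but it can be closed without any minimality: the key is the transitivity of the interval-poset, which in terms of $G_P$ says that if $u\in\mathrm{Int}_v=[a_v,b_v]$ then $\mathrm{Int}_u\subseteq\mathrm{Int}_v$, i.e.\ $\alpha_u\geq\alpha_v$ and $\beta_u\leq\beta_v$. Take any $G_P$-crossing $\alpha_s<\alpha_t<\beta_s<\beta_t$. If $s>\alpha_t$, then (using $\alpha_s<s\leq\beta_s<\beta_t$) we get $s\in[\alpha_t+1,\beta_t]=\mathrm{Int}_t$, hence $\alpha_s\geq\alpha_t$, contradiction; so $s\leq\alpha_t$. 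Symmetrically, if $t\leq\beta_s$, then $t\in[\alpha_s+1,\beta_s]=\mathrm{Int}_s$ (using $\alpha_s<\alpha_t<t$), hence $\beta_t\leq\beta_s$, contradiction; so $t>\beta_s$. Thus \emph{every} crossing of $G_P$ already satisfies the two inequalities $s\leq\alpha_t$ and $t>\beta_s$ needed to make the lower arc of $e_s$ and the upper arc of $e_t$ a non-Kreweras pair; your "obvious" candidate always works and the minimality reduction you were worried about is unnecessary. With this observation filled in, your proposal becomes a complete alternative proof.
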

\begin{proof}
Let $\pi=\iota(T)$ and $\pi'=\iota(T')$. Let $J(\pi,\pi')$ be the superimposition of $\pi$ and $\overline{\pi'}$, with the parts of $\pi$ red and the parts of $\overline{\pi'}$ blue, and the points alternatively blue and red on the line, as in~\Cref{fig:complement_bij} (which has $\pi'=\pi$). It is easy to see that $J(\pi,\pi')$ is non-crossing if and only if $(\pi,\pi')$ is an interval in the Kreweras lattice. Indeed by construction of the Kreweras complement, $\pi'$ is the partition of the red points according to the connected components of the upper half-plane cut by the blue regions. Hence, the partition $\pi$ on red points has to satisfy $\pi\leq \pi'$ for $J(\pi,\pi')$ to be non-crossing.

On the other hand, the \emph{upper representation} of $M$ is the arc-diagram $M^\uparrow$ obtained by flipping the lower arcs of $M$ upwards. Clearly $M^\uparrow$ is crossing-free if and only if $M$ has no flawed pairs nor non-Kreweras pairs. By~\Cref{lem:flawed_meandering} this is equivalent to $M$ being a meandering tree without non-Kreweras pairs. 

Now, as illustrated in~\Cref{fig:kreweras_represent}, there is a simple link between $J(\pi,\pi')$ and $M^\uparrow$. For each point $p$ of $\pi$ (resp. $\overline{\pi'}$), let the \emph{attached point} of $p$ be the rightmost (resp. leftmost) point of the block to which $p$ belongs. Then $p$ yields an arc in $M^\uparrow$ connecting the white point just on the left (resp. right) of $p$ to the black point just on the right (resp. left) of its attached point. From this correspondence it is easily checked that $J(\pi,\pi')$ is crossing-free if and only if $M^\uparrow$ is crossing-free.
\end{proof}

\begin{rem}\label{rk:non-crossing-tree}
Lemma~\ref{lem:meandering_krew} ensures that Kreweras intervals form a subfamily of Tamari intervals, which is well-known, see~\cite{bernardi09} and references therein. These Tamari intervals are called \emph{exceptional} in~\cite{rognerud18}. Note that the meandering trees $M$ whose upper representation is crossing-free correspond to non-crossing trees for the operation of~\Cref{fig:local_op_int_poset_tree} performed from right to left. An example is shown in Figure~\ref{fig:kreweras_noncrossing}. This recovers the fact that the interval-poset trees for exceptional Tamari intervals are the non-crossing trees~\cite{rognerud18}. 
\end{rem}

In a blossoming tree, a simple path $\pi = v_0, \ldots, v_k$ with $k \geq 1$ is called \tdef{non-Kreweras} if the edge following $(v_0, v_1)$ in \ccw-order around $v_0$ is a plain edge, and the same holds for $(v_{k-1}, v_k)$ with $v_k$. A blossoming tree is called \tdef{Kreweras} if it has no non-Kreweras path. 

\begin{lemma}
 A Tamari interval $I = (T, T')$ is Kreweras if and only if $\inttoblom(I)$ is Kreweras. 
\end{lemma}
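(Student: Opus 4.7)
My plan is to mirror the approach of Lemmas~\ref{lem:modern} and~\ref{lem:infinitely-modern}: by Lemma~\ref{lem:meandering_krew}, $I$ is Kreweras iff $M = \phi(I)$ has no non-Kreweras pair, so it suffices to show that $M$ has no non-Kreweras pair iff $B = \Phi(I) = \gamma(M)$ has no non-Kreweras path.

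The central tool I would need is a ccw-counterpart of Lemma~\ref{lem:adjacent}: for a plain edge $e = (u,v)$ of $B$ with $M = \delta(B)$, and $b_u$, $w$ the black and white points in $M$ corresponding to $u$ and $e$, the edge following $e$ in ccw-order around $u$ is a bud iff $e$ corresponds to the outermost arc at $b_u$ on its side in $M$, meaning no other arc at $b_u$ on the same upper/lower side encloses~$e$. I would prove this by essentially the same closure-matching analysis as in the proof of Lemma~\ref{lem:adjacent}, but tracking short pairs in the ccw direction: in the ccw cyclic order at $b_u$, inner arcs on each side precede outer arcs, and the outermost arc on each side is immediately adjacent to the bud on the opposite side, so that its leg forms a short pair with that opposite bud in the ccw contour of $B$.

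With this ccw-analog in hand, I would set up the correspondence in analogy with Lemma~\ref{lem:infinitely-modern}. Starting from a non-Kreweras pair $(L, U)$ in $M$ with $L = (x_\ell, x_r)$ lower, $U = (x'_\ell, x'_r)$ upper, and $x_\ell < x'_\ell < x_r < x'_r$, I pick such a pair minimizing the overlap $x_r - x'_\ell$ and let $\pi$ be the unique path in the underlying tree of $M$ from $x_r$ to $x'_\ell$. The minimality forces the first edge of $\pi$ at $x_r$ to correspond to a lower arc $(w^*, x_r)$ with $w^* > x_\ell$ (otherwise $(w^*, x_r)$ paired with $U$ would give a non-Kreweras pair of strictly smaller overlap), so that $L$ encloses this edge and the edge is not outermost at $x_r$; by the ccw-analog its ccw-next around $x_r$ is then a plain edge. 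A symmetric argument at $x'_\ell$ uses $U$. Hence $\pi$ is a non-Kreweras path in $B$. The converse direction proceeds analogously: a minimal non-Kreweras path in $B$ yields, via the enclosing arcs granted by the ccw-analog at its two endpoints, a non-Kreweras pair in $M$.

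The principal obstacle is establishing the ccw-analog of Lemma~\ref{lem:adjacent} itself, which demands careful bookkeeping of the closure matching in ccw rather than cw order. A more delicate point is verifying that the first edges of $\pi$ at both endpoints use arcs on the sides matching $L$ and $U$ (a lower arc at $x_r$ and an upper arc at $x'_\ell$); this is enforced by the minimality of the chosen non-Kreweras pair, via the contradiction argument sketched above.
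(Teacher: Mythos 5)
Your reduction (via Lemma~\ref{lem:meandering_krew}, then translating between non-Kreweras pairs in $M$ and non-Kreweras paths in $B$) is the paper's route, and your converse direction essentially matches the paper's: start from a \emph{minimal} non-Kreweras path and read off an enclosing upper arc at one end and an enclosing lower arc at the other. The ``ccw-analog of Lemma~\ref{lem:adjacent}'' you flag as the main obstacle is in fact immediate once one observes that the ccw cyclic order of edges around a black point in the meandering representation is: right bud, upper arcs from innermost to outermost, left bud, lower arcs from innermost to outermost. No separate closure-matching argument is needed; the paper uses this cyclic order (phrased as planarity plus absence of flawed pairs) directly.

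The genuine gap is in your forward direction. You minimize the overlap $x_r - x'_\ell$ over non-Kreweras pairs and claim this forces the first edge of $\pi$ at $x_r$ to be a lower arc $(w^*, x_r)$ with $w^* > x_\ell$, ``otherwise $(w^*, x_r)$ paired with $U$ would give a non-Kreweras pair of strictly smaller overlap.'' This is false: if $w^* < x_\ell$, the pair $((w^*, x_r), U)$ has overlap $x_r - x'_\ell$, exactly the same, so minimality rules out nothing. You also do not address the case where the first edge of $\pi$ at $x_r$ is an upper arc $(x_r, w^{**})$ (nested inside $U$), which minimality of overlap cannot exclude either. The paper avoids this entirely by not invoking minimality in the forward direction: it lets $V'$ be the black points in $[x'_\ell, x_r]$ and $E'$ the plain edges whose white point lies strictly between them, notes $|E'| = |V'| - 1$, and uses planarity (against $L$ and $U$) to show every edge of $E'$ stays within $V'$; since $B$ is a tree, $(V',E')$ is a subtree, so $\pi$ lies entirely inside $[x'_\ell, x_r]$. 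This immediately forces the first edge at $x'_\ell$ to be an upper arc strictly inside $U$ and the last edge at $x_r$ to be a lower arc strictly inside $L$, giving the non-Kreweras path without any minimality hypothesis. You would need to replace your minimality step with this subtree containment argument (or something equivalent) for the forward direction to be correct.
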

\begin{proof}
By~\Cref{lem:meandering_krew} it suffices to show that $M\in\mtree_n$ has a non-Kreweras pair if and only if $B=\bijbilblom(M)$ has a non-Kreweras path. 

Assume $M$ has a non-Kreweras pair of arcs $a_\uparrow, a_\downarrow$. Let $v$ and $w$ be respectively the left endpoint of $a_\uparrow$ and the right endpoint of $a_\downarrow$. Let $V'$ be the set of vertices of $B$ corresponding to black points from $v$ to $w$ on the horizontal line (both ends included), and let $E'$ be the set of plain edges of $B$ whose white point is between $v$ and $w$ on the horizontal line. Note that $|E'| = |V'| - 1$. Moreover, by planarity, as limited by $a_\uparrow$ and $a_\downarrow$, every edge in $E'$ must connect two vertices in $V'$. Hence, as $B$ is a tree, the subgraph $H = (V',E')$ is a subtree of $B$, which implies that there is a path $\pi$ in $H$ from $v$ to $w$. In $M$, the path $\pi$ stays between $v$ and $w$, hence is non-Kreweras due to the two edges containing $a_\uparrow$ and $a_\downarrow$. 

Conversely, suppose that $B$ has a non-Kreweras path. Then it has a non-Kreweras path $\pi = v_0, \ldots, v_k$ that is \emph{minimal}, i.e., for $i \in [k-1]$, the next edge after $(v_i,v_{i+1})$ (resp. after $(v_i,v_{i-1})$) in \ccw-order around $v_i$ is a bud. The situation in $M$ is as shown in the right-part of~\Cref{fig:transfer_kreweras} (up to exchanging $v_0$ and $v_k$). Let $a_{\uparrow}$ be the upper arc of the next edge after $(v_0,v_1)$ in \ccw-order around $v_0$, and $a_{\downarrow}$ the lower arc of the next edge after $(v_{k-1},v_k)$ in \ccw-order around $v_k$. Then, by planarity and the absence of flawed pair in $M$, we know that $a_{\uparrow}$ ends on the right of $v_k$, and similarly $a_\downarrow$ ends on the left of $v_0$. Thus, $a_\uparrow$ and $a_\downarrow$ form a non-Kreweras pair of arcs in $M$.
\end{proof}

\begin{figure}[!ht]
 \centering
 \includegraphics[page=1,width=0.6\textwidth]{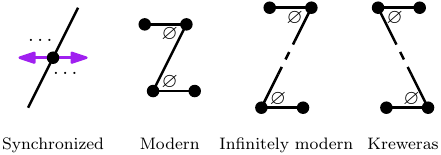}
 \caption{Forbidden patterns of blossoming trees for subfamilies of Tamari intervals.}
 \label{fig:forbidden-pattern}
\end{figure}

 \begin{rem}\label{rem:inf_modern_mirror}
 The forbidden patterns for the considered subfamilies are illustrated in Figure~\ref{fig:forbidden-pattern}. 
We note that a blossoming tree is Kreweras if and only if its reflection is infinitely modern.
 \end{rem}

\section{Counting results}
\label{sec:counting}

\subsection{Tamari intervals}
From~\Cref{theo:main} we recover the counting formula for Tamari intervals as follows.

\begin{prop}\label{prop:count_all_intervals}
 For $n \geq 1$, the number of Tamari intervals of size $n$ is
 \[
 \frac{2}{n(n+1)}\binom{4n+1}{n-1}.
 \]
\end{prop}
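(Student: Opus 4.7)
The plan is to invoke Theorem~\ref{theo:main}, which reduces the task to counting the bicolored blossoming trees of size $n$; since $\inttoblom$ is a bijection from $\tamint_n$ onto $\blossoming_n$, it suffices to show $|\blossoming_n|=\tfrac{2}{n(n+1)}\binom{4n+1}{n-1}$.

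The most economical route is to exploit the closure construction already set up in Section~\ref{sec:blossoming} (cf.~Remark~\ref{rem:poulalhon-schaeffer}): the closure $\oB$ of a bicolored blossoming tree $B\in\blossoming_n$ is, up to joining the two unmatched dangling buds into a root edge, a rooted simple planar triangulation, the extremal vertices of $B$ carrying the root. I would first verify that this map is actually a bijection between $\blossoming_n$ and rooted simple planar triangulations with $n+3$ vertices (the bicoloration corresponding precisely to the extra information needed to distinguish the two endpoints of the root edge, hence killing the color-swap ambiguity of Remark~3.2). Once this is established, the conclusion follows from the classical Tutte--Mullin formula, which enumerates these triangulations by $\tfrac{2(4n+1)!}{(n+1)!(3n+2)!}$, and this quantity simplifies to the stated expression.

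A self-contained alternative, preferable if one does not wish to invoke an external enumeration, is a Lagrange-inversion argument. Root each $B\in\blossoming_n$ at one of its $2(n+1)$ buds, producing $2(n+1)\,|\blossoming_n|$ rooted objects (the rooting kills the possible half-turn symmetry noted after Definition~\ref{defn:blossoming-tree}). At the root node, the two buds partition the plain half-edges into a contiguous blue arc and a contiguous red arc, and each such plain half-edge naturally roots a sub-blossoming-tree. This decomposition gives a functional equation $R(x)=x\,\phi(R(x))$, where $\phi(u)$ collects, in a product of two sequences, the contributions of the blue and red arcs. Lagrange inversion then extracts $[x^n]\,R(x)$ in closed form, and division by the rooting factor $2(n+1)$ yields $|\blossoming_n|$.

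The main obstacle in both approaches lies in controlling the bicoloration. In the closure-based proof, one must carefully argue that fixing the two dangling buds at the extremal vertices is precisely what breaks the color-swap involution, so that the bijection is one-to-one without a missing factor of~$2$. In the Lagrange-inversion approach, the delicate step is to set up $\phi$ so that it correctly encodes the bipartition of plain half-edges into two monochromatic arcs separated by the buds; the double summation over the sizes of these two arcs is what will produce the binomial $\binom{4n+1}{n-1}$ in the final answer.
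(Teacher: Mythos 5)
Your proposal correctly reduces the problem to counting $\blossoming_n$ via Theorem~\ref{theo:main} and offers two plausible routes, but neither is the one the paper takes. The paper \emph{marks a plain edge} (not a bud), yielding $n\,|\blossoming_n|$ objects, which are then split into a pair of rooted plane trees and encoded by the length-$(3n+3)$ composition of the three bud-separated child-groups read in \ccw{} first-visit order; the cycle lemma then gives a bijective 2-to-$(n+1)$ reduction to weak compositions of $n-1$ into $3n+3$ parts, producing $\binom{4n+1}{n-1}$ directly without any detour through triangulations or generating functions.

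Both of your routes would work but cost more. Route (1) routes through the Poulalhon--Schaeffer closure to rooted simple triangulations and then cites Tutte's formula $\tfrac{2(4n+1)!}{(n+1)!(3n+2)!}$; this is essentially the Bernardi--Bonichon path, reasonable but not self-contained, and it also requires checking the subtle correspondence you flag between bicolorations and the two orientations of the root edge (or equivalently, PS06's ``balanced'' condition). Route (2), bud-rooting with Lagrange inversion, is closer to what the paper does in Section~\ref{sec:trivariate}, but with bud-rooting it is easy to lose a factor of~$2$: when you cut at the distinguished bud, the plain half-edges of the root node split into a \ccw-first group and a \ccw-second group, and the colour of the \emph{first} group is a genuine extra binary choice (and the case of both groups empty is excluded since every node has at least one plain half-edge). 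Once one writes $U = 2z\bigl(\tfrac{1}{(1-A)(1-B)} - 1\bigr)$ with $A=B=z/(1-A)^3$, Lagrange inversion on $[z^n]\,(1-A)^{-2}$ does recover the formula after dividing by $2(n+1)$; but this is more delicate than the paper's edge-rooted cycle-lemma argument, which is purely bijective and sidesteps the colour-choice bookkeeping at the root.
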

\begin{proof}
 By~\Cref{theo:main}, we have $|\tamint_n|=|\blossoming_n|$. The formula for $|\blossoming_n|$ was obtained in~\cite{PS06} (in the form of counting \emph{balanced} blossoming trees) using a contour encoding. For completeness, and in view of recovering a bivariate refinement due to Bostan, Chyzak and Pilaud \cite{bostan23}, we give here a slightly different encoding.
 
 Let $\blossomedge_n$ be the set of bicolored blossoming trees of size $n$ with a marked edge. For $T \in \blossomedge_n$ with $e$ the marked edge, let $v_0, \ldots, v_n$ be the vertices of $T$ ordered by first visit in a \ccw-tour around $T$, starting at the middle of $e$ along its red half-edge. Note that $T$ can be viewed as a pair of rooted trees, each rooted at a node adjacent to $e$. For a node $v_i$, its children are split by the two incident buds into 3 groups: left, middle and right. The respective sizes of these 3 groups are denoted by $\ell_i, m_i, r_i$.
 
 Let $\cS_n$ be the set of sequences $(a_0, \ldots, a_{3n+2})$ with all $a_i \in \mathbb{N}$ such that $\sum_{i=0}^{3n+2} a_i = n - 1$, and $\cvS_n \subset \cS_n$ the set of such sequences satisfying $\sum_{j=0}^{3i+2} a_j \geq i$ for all $0 \leq i \leq n-1$. By considering the sequence $(b_0, \ldots, b_{n})$ with $b_i = a_{3i} + a_{3i+1} + a_{3i+2}$ and applying the cycle lemma (see \cite{cycle-lemma}) to it, we see that $\cvS_n$ is in 2-to-$(n+1)$ correspondence with $\cS_n$. Let $\vVec(T) = (\ell_0, m_0, r_0, \ldots, \ell_n, m_n, r_n)$. It is clear that $\vVec(T) \in \cvS_n$, as $\sum_{j=0}^{i} (\ell_j + m_j + r_j)$ is the number of children of already visited nodes up to $v_i$, which is at least $i$ for the first rooted tree and at least $i - 1$ for the second rooted tree, as the two roots are the only nodes not counted here. For any sequence from $\cvS_n$, we may construct the first rooted tree using the sequence up to the first index $i$ such that $\sum_{j=0}^{i} (\ell_j + m_j + r_j) = i$, and the second rooted tree with the rest of the sequence. Therefore, the mapping $\vVec$ is a bijection from $\blossomedge_n$ to $\cvS_n$. We thus have
\[
 |\blossoming_n| = \frac{1}{n}|\blossomedge_n| = \frac1{n}|\cvS_n| = \frac{2}{n(n+1)}|\cS_n| = \frac{2}{n(n+1)}\binom{4n+1}{n-1}. \qedhere
\]
\end{proof}

\subsection{Trivariate series according to canopy-parameters} \label{sec:trivariate}

From~\Cref{coro:triple} we derive here formulas for the trivariate counting series of Tamari intervals according to the 3 types of canopy entries. 

\begin{prop}\label{prop:trivariate}
 Let $I_{i,j,m}$ be the number of Tamari intervals of size $n = i + j + m - 1$ with $i, j, m$ the number of canopy-entries of type $\canoo, \canzz, \canoz$ respectively. Let $A \equiv A(x, y, z)$ and $B \equiv B(x, y, z)$ be the trivariate series defined by
 \begin{equation} \label{eq:defn-canopy-series}
 A = \frac{1}{(1 - B)^2} \left(y + \frac{zA}{1-A} \right), \quad B = \frac{1}{(1 - A)^2} \left(x + \frac{zB}{1-B} \right).
 \end{equation}
 Then we have
 \begin{equation}\label{eq:first_exp_canopy}
 \sum_{i,j,m} (i + j + m - 1) I_{i,j,m} x^iy^jz^m = A B, 
 \end{equation}
 \begin{equation}\label{eq:second_exp_canopy}
 F(x, y, z) \equiv \sum_{i,j,m} I_{i,j,m} x^iy^jz^m = \frac{xA}{1 - A} + \frac{yB}{1 - B} + \frac{zAB}{(1 - A) (1 - B)} - AB.
 \end{equation}
\end{prop}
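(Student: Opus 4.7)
The plan is to transfer the statement to bicolored blossoming trees via $\inttoblom$, so that by~\Cref{coro:triple} it suffices to weight each $B \in \blossoming_n$ by $x^{n_1} y^{n_2} z^{n_3}$, where $n_1, n_2, n_3$ count the nodes of types $\canoo, \canzz, \canoz$ respectively; then $F(x,y,z)$ is exactly the generating function for $\bigsqcup_{n \geq 1} \blossoming_n$ under this weighting, and both identities~\eqref{eq:first_exp_canopy} and~\eqref{eq:second_exp_canopy} become statements about blossoming trees.

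The key idea is to introduce auxiliary generating functions for \emph{pending subtrees}: let $A$ (resp.~$B$) count bicolored blossoming subtrees rooted at a plain edge, whose top vertex $v$ is entered by a red (resp.~blue) half-edge. I would derive~\eqref{eq:defn-canopy-series} by a decomposition at $v$. In an $A$-subtree, the entering red half-edge lies inside the red sector at $v$, splitting the other red half-edges into two sub-sequences of lengths $k_1, k_2 \geq 0$ (each additional red half-edge leading to a pending $B$-subtree); the blue sector at $v$ consists of a sequence of $m \geq 0$ blue half-edges (each leading to a pending $A$-subtree); and $v$ itself is weighted $y$ if $m=0$ (type $\canzz$) and $z$ if $m \geq 1$ (type $\canoz$). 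Summing over $k_1, k_2, m \geq 0$ produces the first equation of~\eqref{eq:defn-canopy-series}, and the symmetric analysis gives the second; the system has a unique solution in $\mathbb{Z}[[x,y,z]]$ since $A$ and $B$ have no constant term.

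To derive~\eqref{eq:first_exp_canopy}, I would observe that $AB$ enumerates bicolored blossoming trees with a distinguished plain edge: cutting this edge yields an $A$-subtree (on the red side of the cut) and a $B$-subtree (on the blue side), and this correspondence is clearly bijective. Since a tree in $\blossoming_n$ has exactly $n = i+j+m-1$ plain edges, this double count equals $\sum_{i,j,m}(i+j+m-1)I_{i,j,m}x^iy^jz^m$, which is~\eqref{eq:first_exp_canopy}.

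For~\eqref{eq:second_exp_canopy}, I would compute the generating function $M$ of pairs (blossoming tree, marked node) in two ways. On one hand, marking directly gives $M = F + AB$ since a tree of size $n$ has $n+1 = i+j+m$ nodes. On the other hand, decomposing at the marked node $v$ by its type yields $M = xA/(1-A) + yB/(1-B) + zAB/((1-A)(1-B))$: a type-$\canoo$ node has a sequence of $b \geq 1$ blue half-edges, each with a pending $A$-subtree; a type-$\canzz$ node is symmetric; and a type-$\canoz$ node has $b, r \geq 1$ half-edges of each color on the two sides of its two buds, with independent pending $A$- and $B$-subtrees. Equating the two expressions for $M$ and solving for $F$ gives~\eqref{eq:second_exp_canopy}. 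The only step that deserves real care is the appearance of the squared denominator in~\eqref{eq:defn-canopy-series}, which comes from correctly tracking the two sides of the entering half-edge at the top vertex of a pending subtree; everything else is routine bookkeeping.
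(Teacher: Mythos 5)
Your proposal is correct and takes essentially the same approach as the paper: you transfer to blossoming trees via Corollary~\ref{coro:triple}, define the two species of planted (pending) subtrees, derive system~\eqref{eq:defn-canopy-series} by a root decomposition splitting at the buds, obtain~\eqref{eq:first_exp_canopy} by marking a plain edge, and obtain~\eqref{eq:second_exp_canopy} by marking a node and using that each tree has one more node than plain edge. The paper's "planted blossoming trees" rooted at a dangling half-edge are exactly your pending subtrees; the only cosmetic difference is that you phrase the type-$\canzz$ versus type-$\canoz$ split via the $m=0$ / $m\geq 1$ dichotomy on the blue sector, whereas the paper phrases it as the emptiness of the same-color sequence, which is the same thing.
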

\begin{proof}
 By~\Cref{sec:parameter}, $I_{i,j,m}$ is the number of bicolored blossoming trees having $i$ nodes of type $\canoo$, $j$ nodes of type $\canzz$, and $m$ nodes of type $\canoz$. A \tdef{planted blossoming tree} $T$ is one of the two trees obtained after cutting a plain edge in a bicolored blossoming tree, with the dangling half-edge of the cut-out edge as the \tdef{root}. It is red-planted (resp. blue-planted) if the root is red (resp. blue). Let $A \equiv A(x,y,z)$ and $B \equiv B(x,y,z)$ be respectively the counting series of red-planted and blue-planted blossoming trees, with $x,y,z$ marking the number of nodes of types $\canoo, \canzz, \canoz$ respectively, with the types defined similarly as in bicolored blossoming trees.

 We may decompose the two types of planted blossoming trees at the root, separated by the two buds into three sequences of sub-trees, two with planted blossoming trees planted at opposing color, one with those planted at the same color, whose emptiness determines the type of the node. See~\Cref{fig:AB_decomposition} for an illustration for $A$, that for $B$ is similar. Thus $A$ and $B$ satisfy~\eqref{eq:defn-canopy-series}.
 
 The trivariate counting series of bicolored blossoming trees with a marked plain edge, denoted by $F^-$, is clearly equal to $AB$, which gives~\eqref{eq:first_exp_canopy}. For the second formula, we note that the counting series of bicolored blossoming trees with a marked node, denoted by $F^\bullet$, is
 \[
 F^\bullet = \frac{xA}{1 - A} + \frac{yB}{1 - B} + \frac{zAB}{(1 - A) (1 - B)},
 \]
 where each term corresponds to the marked node being of type $\canoo, \canzz, \canoz$ respectively.  We have $F = F^\bullet - F^-$, as there are one more nodes than plain edges, and as bicolored blossoming trees have no symmetry. We thus get~\eqref{eq:second_exp_canopy}.
\end{proof}

\begin{figure}%[!ht]
\begin{center}
\includegraphics[width=0.8\textwidth]{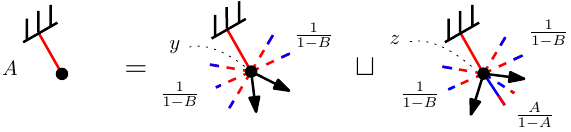}
\caption{Combinatorial decomposition of red-planted blossoming trees, indicating the contributions to the counting series.\label{fig:AB_decomposition}}
\end{center}
\end{figure}

\begin{rem}
 The series $R = A / (1 - A)$ and $G = B / (1 - B)$ are specified by
 \[
 R = (1 + R) (1 + G)^2 (y + zR), \quad G = (1 + G) (1 + R)^2 (x + zG),
 \]
 and~\eqref{eq:second_exp_canopy} becomes
 \[
 F(x, y, z) = xR + yG + zRG - \frac{RG}{(1 + R) (1 + G)}.
 \]
 This is exactly the expression obtained in~\cite{fusy19} via the Bernardi--Bonichon bijection composed with a bijection from minimal Schnyder woods to a certain subclass of unrooted binary trees. The derivation in~\cite{fusy19} is however less direct, as the system obtained there involves a third series ($B$ therein), which can be eliminated by algebraic manipulations.
\end{rem}

\subsection{A formula by Bostan, Chyzak and Pilaud} \label{sec:biv_formula}

In a recent work~\cite{bostan23}, Bostan, Chyzak and Pilaud are interested in the enumeration of Tamari intervals with respect to certain statistics, motivated by geometric objects called diagonals of the associahedra. One of their main results concerns the number $J_k(n)$ of intervals $(T, T') \in \tamint_n$ such that $\can_i(T) = \can_i(T')$ for $k+2$ values of $i$. Their theorem was stated in terms of covering relations; it is clearly equivalent to the following statement.

\begin{prop}[{\cite[Theorem~1]{bostan23}}] \label{prop:bostan}
For any $n \geq 1, k \geq 0$, we have
\begin{equation} \label{eq:refinedtamari}
 J_k(n) = \frac{2}{n(n+1)} \binom{3n}{k} \binom{n+1}{k+2}.
\end{equation}
\end{prop}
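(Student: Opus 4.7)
The plan is to apply the bijection $\inttoblom$ and refine the enumerative argument of~\Cref{prop:count_all_intervals}. By~\Cref{coro:triple}, $J_k(n)$ equals the number of bicolored blossoming trees in $\blossoming_n$ having exactly $k+2$ synchronized nodes (of type $\canoo$ or $\canzz$), so it remains to count such trees.

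The first step is to track this parameter through the contour encoding $\vVec\colon\blossomedge_n \to \cvS_n$ introduced in the proof of~\Cref{prop:count_all_intervals}. The key observation to establish is that, around any node $v_i$, the bicoloring axiom forces the half-edges to appear, in cyclic order starting from the parent edge (or from the marked edge, at the two nodes adjacent to it), as follows: parent edge of some color $P$, then $\ell_i$ children of color $P$, a bud, $m_i$ children of the opposite color, a bud, and $r_i$ children of color $P$. Consequently $v_i$ is synchronized if and only if $m_i = 0$. In terms of the sequence $(a_0,\ldots,a_{3n+2})$ with $a_{3i+1} = m_i$, this translates into the condition that exactly $k+2$ of the entries $a_{3i+1}$ vanish.

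The second step is the refined count. Sequences $(a_0,\ldots,a_{3n+2}) \in \cS_n$ with exactly $k+2$ vanishing entries among the positions $1, 4, \ldots, 3n+1$ are counted by $\binom{n+1}{k+2}\binom{3n}{k}$: choose the $k+2$ vanishing positions, subtract $1$ from each of the remaining $n-1-k$ positive middle entries, then distribute the residual sum of $k$ freely among the $3n+1-k$ remaining nonnegative entries. Since this refinement is invariant under cyclic shifts of the triples $(\ell_i, m_i, r_i)$, the cycle lemma argument of~\Cref{prop:count_all_intervals} still yields the $\frac{2}{n+1}$ correspondence between the refined subsets of $\cS_n$ and $\cvS_n$; combined with the factor $\frac{1}{n}$ relating $\blossomedge_n$ and $\blossoming_n$ (the bicolored blossoming trees have no edge-symmetry), this gives
\[
J_k(n) = \frac{1}{n} \cdot \frac{2}{n+1} \cdot \binom{n+1}{k+2}\binom{3n}{k}.
\]

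The main obstacle is the characterization underlying the first step: one must rigorously verify that the bicoloring together with the ``separating'' property of the two buds forces the three-group structure described above at every node, regardless of the color of the parent (or marked) edge, so that the synchronized-node condition translates cleanly and uniformly to $m_i = 0$.
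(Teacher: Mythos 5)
Your proposal is correct and coincides essentially with the second (bijective) proof given in the paper: there too, the key observation is that under the encoding $\vVec$ a node $v_i$ is synchronized if and only if the mid-entry $m_i$ vanishes, and the count of sequences in $\cS_n$ with exactly $k+2$ vanishing mid-entries is $\binom{n+1}{k+2}\binom{3n}{k}$, transferred to $\cvS_n$ by the same cycle-lemma argument (which, as you note, respects the partition into consecutive triples and hence the number of zero mid-entries). The paper also offers a first, non-bijective derivation by specializing the trivariate system of \Cref{prop:trivariate} to $A(xt,xt,t)$ and applying Lagrange inversion, which you do not reproduce, but the combinatorial route you chose is exactly the one stated there. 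As for the concern you raise at the end: there is no gap. The three groups $\ell_i, m_i, r_i$ are defined purely by where the two buds fall in the cyclic order of the children read from the parent (or marked) edge, with the parent edge never lying between the two buds by construction of the traversal; since the two buds are precisely the transitions between the blue arc and the red arc of half-edges, the middle group consists exactly of the half-edges whose color is opposite to that of the parent's, so $m_i=0$ is equivalent to the node being monochromatic, i.e.\ synchronized, uniformly for every node including the two roots adjacent to the marked edge.
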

\begin{proof}
 In the notation of~\Cref{sec:trivariate}, $J_k(n)$ is given by $\sum_{i+j=k+2} I_{i,j,n-1-k}$. By setting $x = y$, we note that $A(x, x, z) = B(x, x, z)$ by symmetry for the series $A, B$ in~\Cref{prop:trivariate}. Setting moreover $x = xt$ and $z = t$ in~\eqref{eq:first_exp_canopy}, we have
 \[
 \sum_{k,n} n J_k(n) x^{k+2} t^{n+1} = A^2, \text{ where } A = \frac{t}{(1 - A)^2} \left( x + \frac{A}{1 - A} \right).
 \]
 We thus obtain~\eqref{eq:refinedtamari} using the Lagrange inversion formula:
 \begin{align*}
 [x^{k+2}t^{n+1}]A^2 &= [x^{k+2}] \frac{2}{n+1} [y^{n-1}] \left(\frac{x}{(1-y)^2} + \frac{y}{(1-y)^3} \right)^{n+1}\\
 &= \frac{2}{n+1} [y^{k}] \binom{n+1}{k+2} \frac1{(1-y)^{3n-1-k}} = \frac{2}{n+1} \binom{n+1}{k+2} \binom{3n}{k}.
 \end{align*}

 One can also proceed bijectively by a simple adaptation of the proof of~\Cref{prop:count_all_intervals}. With the notation therein, $nJ_{k-2}(n)$ is equal to the cardinality of the subset $\blossomedge_{n,k}$ of $\blossomedge_n$ of blossoming trees where exactly $k$ of the $(n + 1)$ nodes $v_i$ satisfy $m_i = 0$, meaning that the two buds at $v_i$ are consecutive. We call every entry of the form $a_{3k+1}$ in $(a_0, \ldots, a_{3n+2}) \in \cS_n$ a \tdef{mid-entry}. Let $\cS_{n,k}$ (resp. $\cvS_{n,k}$) be the subset of $\cS_n$ (resp. $\cvS_n$) of sequences with exactly $k$ mid-entries equal to $0$. The mapping $\vVec$ is a bijection from $\blossomedge_{n,k}$ to $\cvS_{n,k}$, and the cyclic lemma ensures that $\cvS_{n,k}$ is in 2-to-$(n+1)$ correspondence with $\cS_{n,k}$. Hence,
 \[
 J_{k-2}(n) = \frac1{n} |\blossomedge_{n,k}| = \frac1{n} |\cvS_{n,k}| = \frac{2}{n(n+1)} |\cS_{n,k}| = \frac{2}{n(n+1)} \binom{n+1}{k} \binom{3n}{k-2},
 \] 
 the factor $\binom{n+1}{k}$ accounting for the choice of positions of mid-entries equal to $0$.
\end{proof}

We note that the original proof in~\cite{bostan23} is more involved and requires solving a certain functional equation with the help of a computer.

\subsection{Synchronized intervals}

We recall that, in a synchronized blossoming tree, there is no node of type $\canoz$, thus for each node, its two buds are consecutive. It also means that the half-edges adjacent to each node are monochromatic. We define the \tdef{reduction} of a synchronized blossoming tree to be the tree obtained by merging the two buds at each node into one, and coloring red (resp. blue) the nodes incident to red half-edges (resp. blue half-edges) only. We thus obtain a so-called \tdef{bicolored 1-blossoming tree}, i.e., an unrooted plane tree with one bud per node, and whose nodes are colored red or blue such that adjacent nodes have different colors. As usual, the size of such a tree is its number of nodes minus 1. \Cref{coro:sync} then ensures that synchronized Tamari intervals of size $n$ are in bijection with bicolored 1-blossoming trees of size $n$, and~\Cref{coro:triple} implies that the number of $1$'s (resp. $0$'s) in the common canopy word of the interval is the number of blue nodes (resp. red nodes) in the corresponding bicolored 1-blossoming tree. We then recover the following formulas for the enumeration of synchronized intervals. 

\begin{prop}
 For $n \geq 1$, let $S_n$ be the number of synchronized intervals of size $n$. For $i,j\geq 1$, let $S_{i,j}$ be the number of synchronized intervals with $i$ (resp. $j$) canopy entries of type $\canoo$ (resp. $\canzz$). Then
 \[
 S_n = \frac{2}{n (n + 1)} \binom{3n}{n - 1},\quad S_{i,j} = \frac{1}{ij} \binom{2i + j - 2}{j - 1} \binom{2j + i - 2}{i - 1}.
 \]
\end{prop}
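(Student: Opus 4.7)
The plan is to first invoke the bijection between synchronized Tamari intervals and bicolored $1$-blossoming trees established in the two paragraphs preceding the statement: by \Cref{coro:sync} it is a restriction of $\inttoblom$, and by \Cref{coro:triple} it sends canopy-entries of type $\canoo$ (resp.\ $\canzz$) to blue (resp.\ red) nodes.

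For the formula for $S_n$, I would adapt the contour encoding of \Cref{prop:count_all_intervals}. Given a bicolored $1$-blossoming tree of size $n$ with a marked (and thus oriented) edge, visit the $n+1$ nodes $v_0,\ldots,v_n$ in ccw first-visit order starting from the red half of the marked edge. At each $v_k$ the single bud splits the incident plain half-edges into a left group of size $\ell_k$ and a right group of size $r_k$, producing a sequence $(\ell_0,r_0,\ldots,\ell_n,r_n)$ of $2(n+1)$ non-negative integers summing to $n-1$. An argument identical to that in \Cref{prop:count_all_intervals} shows that this is a bijection onto the subset defined by $\sum_{k\leq i}(\ell_k+r_k)\geq i$ for all $i<n$, and the cycle lemma applied to the $n+1$ partial sums $\ell_k+r_k$ yields a $2$-to-$(n+1)$ correspondence with the unrestricted sequences, of which there are $\binom{3n}{n-1}$. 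Dividing by the $n$ choices of the marked edge produces the announced formula for $S_n$.

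For the bivariate formula $S_{i,j}$, I would use \Cref{prop:trivariate} specialized to $z=0$, which gives $A=y/(1-B)^2$, $B=x/(1-A)^2$ and $F(x,y,0)=\tfrac{xA}{1-A}+\tfrac{yB}{1-B}-AB$. Applying bivariate Lagrange--Good inversion to this system yields
\[
[x^iy^j]\,G(A,B)=[A^jB^i]\frac{G(A,B)\bigl((1-A)(1-B)-4AB\bigr)}{(1-A)^{2i+1}(1-B)^{2j+1}}
\]
for any formal series $G$. Applying this to each summand of $F(x,y,0)$ and simplifying with $[z^k](1-z)^{-m}=\binom{k+m-1}{k}$, the resulting binomial expression collapses, via the identities $\binom{2i+j-2}{j-2}=\tfrac{j-1}{2i}\binom{2i+j-2}{j-1}$ and its mirror, to $\tfrac{1}{ij}\binom{2i+j-2}{j-1}\binom{2j+i-2}{i-1}$.

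The main obstacle is the Lagrange inversion computation: the three contributions of $F(x,y,0)$ must be computed separately, and the correction produced by $-AB$ is needed to convert the individual Raney-type terms coming from $xA/(1-A)$ and $yB/(1-B)$ into the symmetric product form. A bijective alternative, which would refine the above contour encoding by tracking separately a blue sub-sequence and a red sub-sequence (enumerated respectively by $\binom{2i+j-2}{j-1}$ and $\binom{2j+i-2}{i-1}$, each modded out by a cycle-lemma factor accounting for a marked monochromatic node), seems plausible; the delicate point would be to ensure that any admissible pair of such sequences reconstructs a unique bicolored $1$-blossoming tree.
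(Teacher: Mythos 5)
Your proposal is sound but takes genuinely different routes from the paper on both counts. For $S_n$, the paper simply observes that it is the $k=n-1$ specialization of the Bostan--Chyzak--Pilaud formula $J_k(n)=\tfrac{2}{n(n+1)}\binom{3n}{k}\binom{n+1}{k+2}$ already established in the preceding subsection, whereas you rerun the contour-encoding/cycle-lemma argument directly on bicolored $1$-blossoming trees; both are valid, and yours has the merit of being self-contained (note only that the groups $\ell_k,r_k$ should count \emph{children} in the rooted forest obtained by cutting the marked edge, not all incident plain half-edges, so that they sum to $n-1$; your phrase ``an argument identical to that in the earlier proposition'' makes the intent clear). For $S_{i,j}$, the paper avoids bivariate Lagrange--Good altogether: it proves the re-rooting identity $(2j+i-1)S_{i,j}=[x^iy^j]A$ (adding a dangling red half-edge at one of the $2j+i-1$ corners at red nodes yields a red-planted tree) and then applies \emph{univariate} Lagrange inversion in $y$ to $A=yF(A)$ with $F(u)=\bigl(1-x/(1-u)^2\bigr)^{-2}$, which gives $[x^iy^j]A=\tfrac1j\binom{2j+i-1}{i}\binom{2i+j-2}{j-1}$ in two lines with no Jacobian determinant. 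Your bivariate route is correct — your Lagrange--Good kernel and the expression $F(x,y,0)=AB(1-A-B)$ (after substituting $x=B(1-A)^2$, $y=A(1-B)^2$) are right, and I verified the formula for small cases — but it requires expanding a degree-four polynomial in $A,B$ against the rational kernel and recombining several binomial sums, which you acknowledge but do not carry out. The paper's re-rooting trick is what buys the clean collapse. Your speculative final paragraph about a direct bijection from bicolored contour pairs is plausible but is not needed for the statement and is not pursued in the paper.
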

\begin{proof}
 The formula for $S_n$ is the special case $k = n-1$ in~\eqref{eq:refinedtamari}. For the bivariate formula, by the above reduction, $S_{i,j}$ is the number of bicolored 1-blossoming tree with $i$ blue nodes and $j$ red nodes. 
 
 A \tdef{planted 1-blossoming tree} $T$ is one of the two trees obtained after cutting a plain edge in a bicolored 1-blossoming tree, with the dangling half-edge of the cut-out edge as the \tdef{root}. It is red-planted (resp. blue-planted) if the root is incident to a red (resp. blue) node. Let $A \equiv A(x,y)$ (resp. $B \equiv B(x,y)$) be the counting series of red-planted (resp. blue-planted) bicolored 1-blossoming trees, with $x,y$ marking the numbers of blue and red nodes. As in~\Cref{fig:AB_decomposition}, a root-decomposition of planted 1-blossoming trees yields the system
 \[
 A=\frac{y}{(1-B)^2},\ \ \ B=\frac{x}{(1-A)^2},
 \]
 which is consistent with~\eqref{eq:defn-canopy-series} at $z=0$. Hence, $A=yF(A)$, with $F(u)=\frac{1}{(1-x/(1-u)^2)^2}$, so that the Lagrange inversion formula gives
 \begin{align*}
 [x^iy^j]A & = \frac1{j} [u^{j-1}][x^i]\left( \frac{1}{1-x/(1-u)^2} \right)^{2j}\\[.1cm]
 & = \frac1{j} [u^{j-1}]\binom{2j+i-1}{i}\frac1{(1-u)^{2i}} = \frac1{j}\binom{2j+i-1}{i}\binom{2i+j-2}{j-1}.
 \end{align*}

Every tree counted by $S_{i, j}$ has $i$ blue nodes and $j$ red nodes, thus has $2j+i-1$ corners at red nodes, among which $i+j-1$ are on the right of an edge, and $j$ are on the right of a bud. 
Adding a red dangling half-edge at such a corner, we get a tree counted by $[x^iy^j]A$. We thus have
 \[
 (2j + i - 1) S_{i, j} = [x^i y^j]A(x, y),
 \]
which gives the formula for $S_{i,j}$. A bijective derivation can be achieved by following the cyclic lemma approach from~\cite{chottin1975demonstration} for objects counted by $[x^iy^j]A$, combined with an encoding by integer compositions as in the proofs of~\Cref{prop:count_all_intervals} and~\Cref{prop:bostan}. 
\end{proof}

\begin{rem}
 The bicolored 1-blossoming trees are precisely those known~\cite{AlbenqueP15,fusy2007combinatoire} to bijectively encode rooted simple quadrangulations, or equivalently rooted non-separable maps. This is consistent with~\cite{fang17,fusy19}, which shows that rooted non-separable maps with $n + 1$ edges (resp. with $i+1$ vertices and $j+1$ faces)
 are in bijection with synchronized intervals counted by $S_n$ (resp. counted by $S_{i,j}$). It is also consistent with the fact that the above expression of $S_n$ (resp. $S_{i,j}$) corresponds to the known formula~\cite{nsp-refined,sc99,tutte1963census} for the number of rooted non-separable maps with $n+1$ edges (resp. with $i+1$ vertices and $j+1$ faces).
\end{rem}

\subsection{Modern intervals}\label{sec:co_mod}

As mentioned in~\Cref{rem:new-intervals}, the rise operator gives a bijection between modern intervals of size $n$ and \emph{new} intervals of size $n+1$. An explicit formula for the number of new intervals of size $n+1$ has been obtained in~\cite{chapoton06}, which we recover here bijectively via the enumeration of modern blossoming trees. 

\begin{prop} \label{prop:modern-counting}
 The number of modern intervals of size $n$ (also the number of new Tamari intervals of size $n+1$) is
 \[
 \frac{3 \cdot 2^{n-1}}{(n + 1)(n + 2)} \binom{2n}{n}.
 \]
\end{prop}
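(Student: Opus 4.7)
The plan is to count modern bicolored blossoming trees via a marked-edge (planted) decomposition, extending the approach of the proof of \Cref{prop:count_all_intervals}. By \Cref{theo:main} and \Cref{lem:modern}, modern Tamari intervals of size $n$ are in bijection with modern bicolored blossoming trees of size $n$, so it suffices to enumerate the latter. I would introduce two generating series in a variable $t$ marking the number of plain edges: $P(t)$ for planted modern blossoming trees (obtained by cutting a plain edge and taking one half as a dangling root), and $Q(t)$ for the subfamily where the clockwise neighbour of the root at the root node is a bud. By the color-switch symmetry, the red-rooted and blue-rooted counts coincide, so one pair $(P,Q)$ suffices.

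The system arises from decomposing a planted modern tree at its root node $u_0$. The bicoloration places the two buds as separators of a red arc (containing the dangling root) and a blue arc, yielding three consecutive sequences of half-edges at $u_0$: $\mathrm{seq}_1$ between the root and the first bud (red), $\mathrm{seq}_2$ between the two buds (blue), and $\mathrm{seq}_3$ between the second bud and the root (red). Each subtree hanging from such a half-edge is itself a planted modern tree of the opposite color, hence counted by $P$ or by $Q$. Applying the modernity condition of \Cref{lem:modern} at each edge incident to $u_0$: in $\mathrm{seq}_1$ and $\mathrm{seq}_2$ only the last half-edge is clockwise-followed by a bud (so its subtree is unconstrained, of type $P$), while the preceding ones require a subtree with bud-after-root (type $Q$); in $\mathrm{seq}_3$ every half-edge is clockwise-followed either by another plain half-edge or by the dangling root itself, which is a plain half-edge, so every subtree must be of type $Q$. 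This yields the system
\[
P = \left(1 + \frac{tP}{1-tQ}\right)^{\!2} \cdot \frac{1}{1-tQ}, \qquad Q = \left(1 + \frac{tP}{1-tQ}\right) \cdot \frac{1}{1-tQ}.
\]

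Dividing the two equations gives $P = Q(1 + tP/(1-tQ))$, which rearranges to $P = Q(1-tQ)/(1-2tQ)$; substituting back collapses the system to the clean functional equation $Q = 1 + 2tQ^2$. Modernity of the re-glued edge is an inclusion-exclusion: a pair $(T_r,T_b)$ of planted modern trees yields a modern tree if and only if at least one of them is of type $Q$, so the marked-edge count is $P^2 - (P-Q)^2 = 2PQ - Q^2$. A short calculation using $1-2tQ = 1/Q$ shows $2PQ - Q^2 = (P-Q)/t = Q^3$, and therefore the number of modern trees of size $n$ equals $\frac{1}{n}[t^{n-1}]Q^3$. Lagrange inversion applied to $Q - 1 = 2t(1+(Q-1))^2$ with $H(u)=(1+u)^3$ then gives
\[
[t^{n-1}]Q^3 = \frac{3 \cdot 2^{n-1}}{n-1}\binom{2n}{n-2},
\]
and the binomial identity $\binom{2n}{n-2} = \frac{n(n-1)}{(n+1)(n+2)}\binom{2n}{n}$ reduces this to the claimed formula.

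The main delicate point will be handling the dangling root in the modernity analysis: it acts as a plain (non-bud) half-edge for the clockwise-neighbour test at $u_0$, which forces the asymmetry between $\mathrm{seq}_3$ and the other two sequences and requires every $\mathrm{seq}_3$ subtree to be of type $Q$. The inclusion-exclusion for the modernity of the cut edge is an additional small step, but once these are correctly set up, the algebra leading to $Q = 1 + 2tQ^2$ together with the Lagrange inversion is routine.
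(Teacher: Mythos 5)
Your proposal is correct and follows essentially the same approach as the paper: the root-decomposition of planted modern blossoming trees, the resulting system of two generating-function equations (equivalent to the paper's under the substitution $A_\mm = tP$, $B_\mm = tQ$, $z = t$), and the algebraic reduction to a Catalan-type equation ($Q = 1 + 2tQ^2$, versus $C_\mm = z(1+2C_\mm)^2$ in the paper) are all the same. The only minor variation is the final re-rooting step, where you mark a plain edge and use a short inclusion-exclusion to enforce modernity of the cut edge, whereas the paper marks a node and splits at its two buds; both routes then finish by Lagrange inversion and yield the identical formula.
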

\begin{proof}
By~\Cref{lem:modern}, we have to show that the number of modern blossoming trees of size $n$ is given by the above formula. 
We define a \tdef{modern planted tree} to be one of the two components obtained by cutting a modern blossoming tree at the middle of a plain edge, rooted at the dangling half-edge. 
 As we do not account for the types of nodes in modern blossoming trees, we ignore colors in the following. 
 Let $A_\mm \equiv A_\mm(z)$ be the series of modern planted trees with $z$ marking the number of nodes, and $B_\mm \equiv B_\mm(z)$ the series of those with a bud immediately after the root half-edge in \cw-order. We recall from the definition that, in a modern blossoming tree, every plain edge can not be followed by a plain edge in \cw-order at both ends. This gives some restriction on the root-decomposition of modern planted trees. Indeed, for any sub-tree hanging from an edge that is not followed by a bud, it is to be counted by $B_\mm$. A tree counted by $A_\mm$ can be split into three (possibly empty) sequences of subtrees by the two buds of the root. Subtrees in the first sequence all follows a plain edge at the root, thus accounted by $B_\mm$, and also those in the second and the third one except the subtree leading the sequence, which follows a bud and thus has no restriction, and can be any tree accounted by $A_\mm$. The decomposition for trees counted by $B_\mm$ is similar, except that the third sequence is empty. By standard symbolic method, with $z$ accounting for the root, we have
 \begin{equation}
 \label{eq:modern-series}
 A_\mm = \frac{z}{1 - B_\mm} \left( 1 + \frac{A_\mm}{1 - B_\mm} \right)^2, \quad B_\mm = \frac{z}{1 - B_\mm} \left(1 + \frac{A_\mm}{1 - B_\mm}\right).
 \end{equation}

 We define $C_\mm = \frac{A_\mm}{1 - B_\mm}$. From~\eqref{eq:modern-series}, we observe that $A_\mm = B_\mm (1 + C_\mm)$. Substituting $A_\mm$ by $B_\mm (1 + C_\mm)$ in the definition of $C_\mm$ yields
 \[
 C_\mm= \frac{B_\mm}{1 - 2B_\mm}.
 \] 
 From the second equation in~\eqref{eq:modern-series}, we then obtain
 \[
 B_\mm = \frac{z}{1 - B_\mm} (1 + C_\mm) = \frac{z}{1 - 2B_\mm}.
 \]
Dividing the equation by $1-2B_\mm$, we get 
\[
C_\mm = z(1 + 2C_\mm)^2.
\]
 Hence, $C_\mm$ is also the generating series of complete binary trees with weight $z$ on internal nodes and weight $2$ on internal edges. As complete binary trees are counted by Catalan numbers, and there is exactly one less internal edge than internal nodes, we have
 \begin{equation}
 \label{eq:modern-cm}
 C_\mm(z) = \sum_{n \geq 1} \frac{2^{n-1}}{n+1} \binom{2n}{n} z^n.
 \end{equation}

 We now consider modern blossoming trees with one marked node. We may split such a tree at the two buds of the marked node, obtaining two sequences (distinguished by colors) of planted modern blossoming trees, which are formed in the same way as a sequence after a bud at the root in the arguments above to obtain~\Cref{eq:modern-series}. The number of modern blossoming trees of size $n$, which have $n + 1$ nodes each, is thus
 \begin{align*}
 \frac{1}{n + 1} [z^n] \left( 1 + \frac{A_\mm}{1 - B_\mm} \right)^2
 &= \frac{1}{n + 1} [z^n] \left( 2C_\mm + C_\mm^2 \right) = \frac{1}{n + 1} [z^n] \left( C_\mm + \frac{C_\mm - z}{4z} \right) \\
 &= \frac{1}{n + 1} \left( \frac{2^{n-1}}{n+1} \binom{2n}{n} + \frac{2^{n-2}}{n+2} \binom{2n+2}{n+1} \right) \\
 &= \frac{3 \cdot 2^{n-1}}{(n+1)(n+2)} \binom{2n}{n}.
 \end{align*}
\end{proof}

\begin{rem}
The proof above relies on combining identities on generating functions of families of planted trees. Based on similar arguments, one can derive an explicit bijection between objects counted by $B_\mm(z)$ and rooted plane trees with $n$ edges, each edge colored either black or white. We omit the details here, as the obtained bijection is not very enlightening. 
\end{rem}

\begin{rem}
With some more work, it should also be possible to extend the decomposition of modern planted trees to track the numbers of nodes of the three possible canopy-types, and to compute the trivariate generating function $F_\mm(x,y,z)$ of modern intervals, with $x,y,z$ marking respectively canopy-entries $\canoo, \canzz, \canoz$. It is known~\cite{fang2021bijective} that $zF_\mm(x,y,z)$ counts rooted bipartite maps by the numbers of black vertices, white vertices and faces, thus $zF_\mm(x,y,z)$ is symmetric in $x, y, z$. We have not been able to directly see this symmetry at the level of modern bicolored blossoming trees. It would also be interesting to find a closure-bijection from these trees to rooted bipartite maps. 
\end{rem}

\subsection{Modern-synchronized intervals} \label{sec:sp_modsyn}

\begin{figure}
\begin{center}
\includegraphics[width=\linewidth]{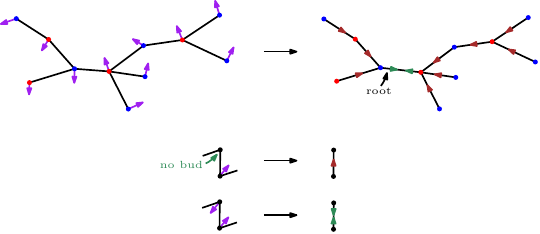}
\end{center}
\caption{A modern bicolored 1-blossoming tree. The result of transferring the bud information into an orientation of the half-edges yields a tree with one bidirected edge $e$ and 
all other edges unidirected toward the closest extremity of $e$.}
\label{fig:modern_synchronized}
\end{figure}

We show here that the intersection of the families of modern and synchronized intervals yields a Catalan family. A bicolored 1-blossoming tree $T$ of size $n$ is called \tdef{modern} if it is also the reduction of a modern synchronized blossoming tree.

\begin{prop}
 For $n \geq 1$, the number of modern-synchronized Tamari intervals of size $n$ is the $n$-th Catalan number. For $i, j \geq 1$, the number of modern-synchronized Tamari intervals with $i$ canopy-entries $\canoo$ and $j$ canopy-entries $\canzz$ is given by the Narayana number
 \[
 \frac{1}{i+j-1} \binom{i+j-1}{i} \binom{i+j-1}{j}.
 \]
\end{prop}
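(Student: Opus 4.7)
The plan is to combine a structural characterization of modernity on the reduced (synchronized) side with a simple cut-and-glue bijection into ordered pairs of rooted plane trees. First, I would invoke~\Cref{coro:sync} and~\Cref{lem:modern}: the bijection $\inttoblom$ restricts to a bijection between modern-synchronized Tamari intervals of size $n$ and bicolored blossoming trees of size $n$ that are simultaneously synchronized and modern. The reduction described just before (merging the two consecutive buds at every node into one, and coloring each node by its monochromatic half-edges) then sends these to \emph{modern bicolored $1$-blossoming trees} of size $n$, namely properly $2$-colored plane trees with $n+1$ nodes and one bud per node such that for every plain edge $e=(u,v)$, the edge following $e$ in clockwise order around $u$ or around $v$ is the bud. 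By~\Cref{coro:triple}, blue (resp.\ red) nodes correspond to canopy entries of type $\canoo$ (resp.\ $\canzz$).

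Next, I would establish the structural core: every modern bicolored $1$-blossoming tree carries a unique distinguished \emph{bidirected} plain edge. For each node $v$, let $e_v$ be the unique plain edge immediately preceding the bud at $v$ in clockwise order. Modernity is equivalent to the condition that every plain edge $e=(u,v)$ satisfies $e=e_u$ or $e=e_v$. Since there are $n+1$ nodes, contributing exactly $n+1$ pairs $(v,e_v)$, while there are only $n$ plain edges each hit at least once, double-counting forces exactly one plain edge $e_0=(u_0,v_0)$ to be hit from both sides. A short induction on distance from $e_0$ then shows that for every $v\notin\{u_0,v_0\}$, $e_v$ is the first edge of the unique path from $v$ to $e_0$; thus the bud positions are entirely determined by the underlying unrooted plane tree together with the choice of $e_0$.

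The third step is the cut-and-glue bijection: cutting at $e_0$ produces an ordered pair $(T_{\mathrm{b}},T_{\mathrm{r}})$ of rooted plane trees with $n+1$ nodes in total, where $T_{\mathrm{b}}$ is rooted at the blue endpoint of $e_0$ and $T_{\mathrm{r}}$ at the red endpoint. Conversely, any such pair can be glued at the roots, and the $1$-blossoming structure is recovered from the rule above. Since colors alternate along the trees, the numbers of blue and red nodes in the original interval are transported to the corresponding bivariate weights of the pair.

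Finally, I would extract the counting formulas. The generating function $T(z)$ of rooted plane trees (with $[z^k]T(z)=C_{k-1}$) satisfies $T(z)=z+T(z)^2$, so the number of ordered pairs with $n+1$ total nodes is $[z^{n+1}]T(z)^2=[z^{n+1}](T(z)-z)=C_n$, yielding the first formula. For the refinement, let $U(z,w)$ and $V(z,w)$ be the bivariate series of blue- and red-rooted plane trees with $z,w$ marking blue and red nodes; color-alternation yields $U=z/(1-V)$ and $V=w/(1-U)$, so $P:=UV$ satisfies the quadratic $P(1-z-w-P)=zw$. Solving and expanding the resulting square root gives
\[
P=\sum_{k\geq 1}C_{k-1}\,(zw)^k(1-z-w)^{-(2k-1)},
\]
and extracting $[z^iw^j]P$ simplifies to the stated Narayana-type expression. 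I expect this last coefficient extraction (or equivalently, the identification with the classical Narayana numbers) to be the main computational step; a bijective alternative would directly map the pair $(T_{\mathrm{b}},T_{\mathrm{r}})$ to a rooted plane tree with $i+j-1$ edges and $i$ leaves, which is classically counted by the Narayana number $N(i+j-1,i)$.
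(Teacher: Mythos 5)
Your proposal is correct and follows essentially the same route as the paper. The structural core is identical: reduce (via \Cref{coro:sync} and \Cref{lem:modern}) to modern bicolored $1$-blossoming trees, mark at each node the plain edge adjacent to its bud, and observe by double-counting ($n+1$ marks, $n$ plain edges, each hit at least once by modernity) that there is a unique bi-oriented edge $e_0$ towards which all other edges orient; the bud placement is then determined by the underlying plane tree together with $e_0$. The only divergence is in the final encoding and the extraction of the Narayana refinement: the paper roots the whole tree at the half-edge of $e_0$ at its blue end to obtain a single bicolored rooted plane tree, and cites the known Narayana count for rooted plane trees with prescribed numbers of nodes at even and odd depth; you instead cut at $e_0$ to obtain an ordered pair of oppositely rooted plane trees and derive the refinement by solving the quadratic $P(1-z-w-P)=zw$ for $P=UV$. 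These two encodings are interchangeable (rooting at a half-edge of $e_0$ versus cutting $e_0$) and both yield the Catalan and Narayana formulas, so the difference is purely presentational.
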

\begin{proof}
 By~\Cref{coro:sync} and~\Cref{lem:modern}, we count instead bicolored 1-blossoming trees. For each bud $b$ in such a tree $T$, let $v$ be its incident node, and $h$ the half-edge that follows $b$ in \ccw-order around $v$. We give to $h$ an orientation away from $v$. See~\Cref{fig:modern_synchronized} for an example. By the absence of non-modern edges, each edge has at least one oriented half-edge, and when there are two, they are in head-to-head direction. In the first case, we endow the whole edge with the same orientation, and in the second case, we say that the edge is bi-oriented.

 Now, deleting the buds, we obtain an encoding of $T$ as a vertex-bicolored unrooted plane tree with an orientation on its $n$ edges. Since there are $n + 1$ vertices, thus $n + 1$ buds and $n + 1$ oriented half-edges, there must be a single bi-oriented edge $e$. We also observe that every vertex has outdegree~$1$ on its half-edges. By a simple induction on the distance towards $e$, all edges other than $e$ are oriented toward $e$, see~\Cref{fig:modern_synchronized}. We may then further encode such an oriented tree by rooting $T$ at the half-edge incident to the blue end of the bi-oriented edge, leading to a rooted plane tree with its vertex coloring inherited from the $1$-blossoming tree structure, with the root-vertex blue. However, the color of the root vertex determines the color of other vertices. Therefore, the final encoding is simply a rooted plane tree, thus counted by Catalan numbers.

 For the refined enumeration, using~\Cref{coro:triple} and the definition of $1$-blossoming trees, by the same arguments above, it amounts to counting bicolored rooted plane trees with $i$ blue nodes and $j$ red nodes. As the root is blue, this is given by the Narayana number \cite{narayana-even-odd}.
\end{proof}

\begin{rem}
The fact that rooted plane trees having $i$ nodes at even depth and $j$ nodes at odd depth are counted by the Narayana number follows from the fact that they are in bijection with rooted plane trees having $i$ inner nodes and $j$ leaves, see~\cite[Sec.3]{janson2015scaling} and~\cite{narayana-bijection}. 
One can also associate to a binary tree $T$ the rooted plane tree whose alternating layout is the smooth drawing of $T$. This gives a bijection from binary trees of size $n$ to rooted plane trees with $n$ edges, which maps the two types of leaves (left and right) in binary trees to the two types of nodes (even and odd) in rooted plane trees.
\end{rem}

\begin{rem}
This result is also a consequence of~\cite{fang2021bijective}. Indeed, the rise operator applied to a modern interval increases the number of entries $\canoz$ by $1$, while preserving the numbers of entries of type $\canoo$ and $\canzz$. Hence, by \ref{rem:new-intervals}, the number of modern-synchronized intervals with $i$ canopy-entries $\canoo$ and $j$ canopy-entries $\canzz$ is the number of new Tamari intervals with $i$ canopy-entries of type $\canoo$, $j$ of type $\canzz$, and one of type $\canoz$. It follows from~\cite{fang2021bijective} that it is also the number of rooted bipartite planar maps with a unique face (i.e., rooted plane trees), with $i$ black vertices and $j$ white vertices in the proper 2-coloring with the root-vertex black.
\end{rem}

\begin{rem}\label{rk:trivial_mirror}
The structure of the orientation of $T$ obtained from the buds clearly implies that, if a blossoming tree is modern and synchronized, then it is infinitely modern. We also note that these blossoming trees are the mirror of the blossoming trees for trivial intervals (\Cref{rem:trivial} and~\Cref{fig:trivial}). 
\end{rem}

\subsection{Kreweras and infinitely modern intervals}\label{sec:inf_mo}

Via the bijection between Kreweras intervals of size $n$ and non-crossing trees with $n$ edges (\Cref{rk:non-crossing-tree}), which are well-known to be in bijection to ternary trees with $n$ nodes, we recover the following known result~\cite{bernardi09,Kre72}.

\begin{prop}\label{prop:count_kre}
 The number of Kreweras intervals of size $n$ is
 \[
 \frac{1}{2n+1}\binom{3n}{n}.
 \]
\end{prop}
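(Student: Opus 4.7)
The plan is to combine the bijection $\inttoblom$ (restricted to Kreweras intervals) with the non-crossing tree encoding recorded in~\Cref{rk:non-crossing-tree}, and then invoke a classical count of non-crossing trees.

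First, by~\Cref{theo:main}, $\inttoblom$ is a bijection between $\tamint_n$ and $\blossoming_n$; and by the Kreweras specialization established in~\Cref{sec:specialize}, it restricts to a bijection between Kreweras intervals of size $n$ and Kreweras blossoming trees of size $n$. Composing this with~\Cref{rk:non-crossing-tree} (which, via the local operation of~\Cref{fig:local_op_int_poset_tree} applied in reverse, identifies the interval-poset tree of an exceptional Kreweras interval with a non-crossing tree) yields a bijection between Kreweras intervals of size $n$ and non-crossing trees with $n$ edges.

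Next, I would count non-crossing trees with $n$ edges by a classical bijection with ternary trees having $n$ internal nodes. Concretely, one can root the non-crossing tree at its leftmost vertex, orient all edges away from the root, and use the non-crossing property to partition, at each vertex, the outgoing edges (together with the incoming edge) into three canonical groups that play the role of the three children of a ternary node. If $t(z)$ denotes the OGF of ternary trees counted by internal nodes, then $t = 1 + z\,t^3$. Setting $u := t - 1$ gives $u = z(1+u)^3$, and Lagrange inversion yields
\[
[z^n]\, u \;=\; \frac{1}{n}\,[u^{n-1}]\,(1+u)^{3n} \;=\; \frac{1}{n}\binom{3n}{n-1} \;=\; \frac{1}{2n+1}\binom{3n}{n},
\]
which is the desired count.

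The heavy lifting has already been done in earlier sections, so there is little to grind through; the only subtle step is verifying carefully that the restriction of $\inttoblom$ to Kreweras intervals matches, via~\Cref{fig:local_op_int_poset_tree}, the non-crossing tree encoding of~\Cref{rk:non-crossing-tree} (one should check, for instance, that the forbidden non-Kreweras pattern of~\Cref{fig:forbidden-pattern} on the blossoming tree side corresponds to crossings on the interval-poset tree side). An alternative, more self-contained route would be to count Kreweras blossoming trees directly by decomposing them at a marked plain edge in the spirit of~\Cref{prop:count_all_intervals} and~\Cref{prop:modern-counting}, using the forbidden non-Kreweras pattern to set up a system of functional equations for rooted Kreweras blossoming trees and then applying Lagrange inversion to extract the same closed form.
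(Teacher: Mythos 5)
Your proposal is correct and takes essentially the same route as the paper: identify Kreweras intervals with non-crossing trees on $n$ edges (Remark~\ref{rk:non-crossing-tree}) and then count the latter via the classical bijection to ternary trees, giving the Fuss--Catalan number $\frac{1}{2n+1}\binom{3n}{n}$. The only differences are cosmetic—you insert an extra hop through blossoming trees before landing on non-crossing trees, and you spell out the Lagrange inversion that the paper merely cites as well-known.
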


Moreover, as noted in Remark~\ref{rem:inf_modern_mirror}, the blossoming trees for infinitely modern intervals are just the reflection of the blossoming trees for Kreweras intervals, which induces a bijection between these two interval families. We thus recover the following result from~\cite{rognerud18}.

\begin{prop}\label{prop:count_inf_modern}
 The number of infinitely modern Tamari intervals of size $n$ is
 \[
 \frac{1}{2n+1}\binom{3n}{n}.
 \]
\end{prop}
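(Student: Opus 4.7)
The plan is to reduce the statement to~\Cref{prop:count_kre} by leveraging the reflection observation in~\Cref{rem:inf_modern_mirror}. First, by~\Cref{theo:main} together with~\Cref{lem:infinitely-modern}, infinitely modern Tamari intervals of size $n$ are in size-preserving bijection with infinitely modern bicolored blossoming trees of size $n$; analogously, Kreweras intervals of size $n$ biject with Kreweras bicolored blossoming trees of size $n$. So it suffices to produce a size-preserving bijection between infinitely modern blossoming trees and Kreweras blossoming trees.

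For this I would take the reflection of the underlying plane tree. The forbidden pattern for an infinitely modern tree is a non-modern path, characterized by plain edges being the \cw-successors at both endpoints of a simple path of plain edges; the forbidden pattern for a Kreweras tree is defined identically, but with \ccw\ in place of \cw. A reflection preserves the set of plain edges and the (unoriented) bicoloration structure while exchanging \cw\ and \ccw\ at every vertex, so it maps non-modern paths precisely to non-Kreweras paths and conversely. Hence reflection is a size-preserving involutive bijection between the two families of blossoming trees. Composing with the bijections above, we obtain
\[
 |\{\text{infinitely modern intervals of size } n\}| = |\{\text{Kreweras intervals of size } n\}| = \frac{1}{2n+1}\binom{3n}{n},
\]
where the last equality is~\Cref{prop:count_kre}.

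The only substantive point requiring care is the claim that reflection genuinely exchanges the two forbidden patterns, but this is immediate from the definitions: reflecting a blossoming tree swaps the cyclic order at each vertex, and the non-modern and non-Kreweras path conditions differ only in that swap. I therefore do not expect any real obstacle. A more laborious alternative would be to enumerate infinitely modern blossoming trees directly by a root-decomposition in the spirit of the proof of~\Cref{prop:modern-counting}, but the reflection argument is considerably shorter and simply transfers the content of~\Cref{prop:count_kre}.
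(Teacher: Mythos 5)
Your proof is correct and follows exactly the paper's route: the paper also derives this count by invoking Remark~\ref{rem:inf_modern_mirror} to note that reflection of blossoming trees gives a bijection between infinitely modern and Kreweras families, then applies Proposition~\ref{prop:count_kre}. Your added justification that reflection exchanges the two forbidden patterns by swapping \cw\ and \ccw\ is a correct elaboration of what the paper leaves implicit in that remark.
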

 
\subsection{Self-dual intervals}\label{sec:self_dual_counting}

We use here~\Cref{lem:duality-commutation} to bijectively obtain counting formulas for self-dual intervals of size $n$ in all families considered so far.

\begin{table}
 \centering
 \begin{tabular}{cccc}
 \toprule
 \textbf{Types} & \hspace{.4cm}\begin{tabular}{c}General \\ size $n$\end{tabular}\hspace{.4cm} & \begin{tabular}{c}Self-dual \\ size $2k$\end{tabular} & \hspace{.4cm}\begin{tabular}{c}Self-dual \\ size $2k + 1$\end{tabular}\hspace{.4cm} \\
 \midrule
 \textbf{General} & $\displaystyle \frac{2}{n(n+1)} \binom{4n+1}{n-1}$ & $\displaystyle \frac{1}{3k+1} \binom{4k}{k}$ & $\displaystyle \frac1{k+1} \binom{4k+2}{k}$ \\ \midrule
 \textbf{Synchronized} & $\displaystyle \frac{2}{n(n+1)}\binom{3n}{n-1}$ & $\displaystyle 0$ & $\displaystyle \frac1{k+1}\binom{3k+1}{k}$ \\ \midrule
 \textbf{\begin{tabular}{c}Modern \\ / new for size-1\end{tabular}} & $\displaystyle \frac{3\cdot 2^{n-1}}{(n+1)(n+2)} \binom{2n}{n}$ & $\displaystyle \frac{2^{k-1}}{k+1} \binom{2k}{k}$ & $\displaystyle \frac{2^k}{k+1} \binom{2k}{k}$ \\ \midrule
 \textbf{\begin{tabular}{c}Modern and \\ synchronized\end{tabular}} & $\displaystyle \frac{1}{n+1} \binom{2n}{n}$ & $\displaystyle 0$ & $\displaystyle \frac1{k+1} \binom{2k}{k}$ \\ \midrule
 \textbf{\begin{tabular}{c}Inf. modern \\ / Kreweras\end{tabular}} & $\displaystyle \frac1{2n+1}\binom{3n}{n}$ & $\displaystyle \frac1{2k+1} \binom{3k}{k}$ & $\displaystyle \frac1{k+1} \binom{3k+1}{k}$ \\
 \bottomrule
 \end{tabular}
\caption{Counting formulas for general and self-dual Tamari intervals.}
\label{table:self_dual}
\end{table}

\begin{prop}\label{prop:self-dual}
 The number of self-dual Tamari intervals of size $n$ is given by the formulas in~\Cref{table:self_dual} (depending on the parity of $n$) for each of the following families: general, synchronized, modern, new, modern and synchronized, infinitely modern, and Kreweras.
\end{prop}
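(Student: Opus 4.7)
The plan is to combine \Cref{lem:duality-commutation} with a direct enumeration of symmetric blossoming trees. Since $\dual$ switches the colors of all half-edges, a self-dual interval corresponds to a bicolored blossoming tree invariant under color-switching; by the remark following \Cref{defn:blossoming-tree}, this occurs precisely when the underlying uncolored blossoming tree admits a half-turn symmetry (in which case the two possible bicolorations coincide). Hence, for each family in the statement, counting self-dual intervals of size $n$ reduces to counting half-turn symmetric uncolored blossoming trees of the family with $n$ plain edges.

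A half-turn symmetric blossoming tree has a unique center. Because each node carries two buds and the half-turn sends buds to buds, the two buds of any fixed node must be at antipodal positions and plain edges around it come in opposite pairs. A parity count on the $n+1$ nodes forces the center to be a node when $n=2k$ is even and the midpoint of a plain edge when $n=2k+1$ is odd, which explains the two columns in \Cref{table:self_dual}. In the even case the tree is specified by the linear sequence of $j\geq 0$ planted sub-trees lying in one of the two half-planes delimited at the center node by the antipodal pair of buds; in the odd case, cutting the central edge produces two planted sub-trees that are half-turn images of one another, so the tree is determined by a single planted sub-tree of $k+1$ nodes.

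For each family I would introduce generating function(s) $P(z)$ of the appropriate planted trees (with $z$ marking nodes) and extract the relevant coefficient by Lagrange inversion. In the general case, $P=z/(1-P)^3$; the even count is $[z^k]P/(1-P)$, and setting $Q=P/(1-P)$, the equation $Q=z(1+Q)^4$ gives $\frac{1}{3k+1}\binom{4k}{k}$, while the odd count $[z^{k+1}]P$ gives $\frac{1}{k+1}\binom{4k+2}{k}$. For the synchronized family, in the reduction to bicolored $1$-blossoming trees adjacent nodes carry different colors, and the half-turn swaps colors, so no node can be fixed and the even count is $0$; for odd $n$ the planted series $R=z/(1-R)^2$ yields $[z^{k+1}]R=\frac{1}{k+1}\binom{3k+1}{k}$. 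The modern family uses the series $A_\mm,B_\mm,C_\mm$ of \Cref{prop:modern-counting}: the modern constraint at the center forces all but one planted sub-tree per half to be of type $B_\mm$, so the even count equals $[z^k]C_\mm$ and the odd count equals $[z^{k+1}]B_\mm=[z^{k+1}](z+2zC_\mm)$, yielding the claimed formulas; the modern-synchronized case combines both arguments, giving $0$ in the even case and the Catalan number $\frac{1}{k+1}\binom{2k}{k}$ in the odd case via $\beta=z/(1-\beta)$.

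The main obstacle is the Kreweras/infinitely modern case, because the defining condition involves simple paths of arbitrary length rather than single edges, so the planted decomposition at the root is more delicate. By \Cref{rem:inf_modern_mirror}, reflection preserves half-turn symmetry and identifies the two families, so it suffices to treat one of them. I would set up a bivariate system for planted infinitely modern trees, distinguishing whether the root half-edge is followed in \cw-order by a bud (by analogy with $A_\mm,B_\mm$), verify that the non-modern-path condition propagates correctly from the center node or center edge into the two half-images, and extract the Fuss--Catalan formulas $\frac{1}{2k+1}\binom{3k}{k}$ and $\frac{1}{k+1}\binom{3k+1}{k}$ by Lagrange inversion. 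Alternatively one could transport the question through the bijection of \Cref{rk:non-crossing-tree} with non-crossing trees on $n+1$ points and directly enumerate the half-turn symmetric ones by the same center-vertex/center-edge decomposition.
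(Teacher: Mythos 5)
Your overall strategy is exactly the paper's: by \Cref{lem:duality-commutation}, self-dual intervals correspond to uncolored blossoming trees with a half-turn symmetry; parity of $n$ determines whether the center is a node or an edge; and each family is handled by a planted-tree decomposition at the center. Your computations for the general, synchronized, modern, and modern-synchronized families are correct and match the paper's (the paper uses a bijection to 4-ary trees where you use Lagrange inversion on $Q=z(1+Q)^4$, but these are interchangeable). In fact your coefficient extraction $[z^{k+1}]B_\mm$ for the odd modern case corrects a small indexing slip in the paper's text, and your identity $B_\mm=z(1+2C_\mm)$ is the right one.

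Two gaps remain. First, the Kreweras/infinitely-modern case is the one you leave unfinished, and it matters: your first proposed route (a bivariate system for planted infinitely modern trees distinguishing only whether the root half-edge is immediately followed by a bud) is unlikely to close, because the defining ``no non-modern path'' condition is non-local -- a planted subtree can contribute an arbitrarily long non-modern path that only becomes visible after regrafting, so the single bit ``bud after root'' is not enough state to propagate the constraint. Your second route is the one the paper actually takes: transfer to non-crossing trees via \Cref{rk:non-crossing-tree} and count the mirror-symmetric ones by a center-vertex/center-edge decomposition (the relevant symmetry on the non-crossing trees is the left-right reflection, equivalent to the half-turn on the meandering tree). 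One then gets the number of non-crossing trees of size $k$ for $n=2k$, and $[z^k](1+R)^2$ with $R=z(1+R)^3$ for $n=2k+1$. Second, you do not address the \emph{new} intervals entry of the table; the paper disposes of it in one line by noting that the rise operator preserves self-duality, so self-dual new intervals of size $n$ are in bijection with self-dual modern intervals of size $n-1$.
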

\begin{proof} 
 By~\Cref{lem:duality-commutation}, the number of self-dual Tamari intervals of size $n$ is the number of trees in $\blossoming_n$ that are invariant by switching colors of half-edges. They are also the uncolored blossoming trees with a half-turn symmetry. Now we discuss the case for each family. The parity of $n$ matters, as it changes the quotient of related trees by the half-turn symmetry. In general, the center of rotation is a node when $n$ is even; otherwise, it is an edge.
\smallskip

\noindent\textbf{Self-dual general Tamari intervals.} For $n = 2k$ even, the center of rotation of such a tree is a node $u$, and the quotient of the tree by the half-turn symmetry gives a blossoming tree $T$ of size $k$ with a marked synchronized node $u$. By an argument similar to that in the proof of~\Cref{prop:count_inf_modern}, we can decompose $T$ into four parts: the sub-trees of $u$ except its leftmost descending edge $e = \{ u, v \}$, and the three sequences of sub-trees of $v$ separated by its two buds. We thus have a size-preserving recursive bijection between blossoming trees rooted at a synchronized node and rooted 4-ary trees. Hence, self-dual Tamari intervals of size $n = 2k$ are counted by $\frac1{3k+1}\binom{4k}{k}$.

 For $n = 2k + 1$ odd, the center of rotation is a plain edge, and the quotient of the tree by the half-turn symmetry gives a planted blossoming tree with $k$ nodes, defined in the proof of \ref{prop:trivariate}. As seen in the proof of \ref{prop:trivariate}, when taking $x = y = z = t$, the counting series $A\equiv A(t)$ of these trees satisfies $A = t/(1-A)^3$, and we get $[t^{k+1}]A = \frac1{k+1} \binom{4k+2}{k}$ using Lagrange inversion or the cyclic lemma.

\smallskip

\noindent \textbf{Self-dual synchronized intervals.} In this case, we note that there is no self-dual blossoming tree of even size, as the center of rotation would be a node that is necessarily not synchronized. For $n = 2k + 1$ odd, the center of rotation is a plain edge, and the quotient tree is a planted blossoming tree with $k$ nodes that are all synchronized, i.e., the two incident buds of each node are consecutive. They thus split the subtrees of the root into two sequences. The counting series $A \equiv A(t)$ of these trees satisfies $A = t/(1-A)^2$, and the standard techniques give $[t^{k+1}]A = \frac1{k+1} \binom{3k+1}{k}$.

\smallskip

\noindent \textbf{Self-dual modern/new intervals.} We use the series $A_\mm, B_\mm, C_\mm$ defined in the proof of~\Cref{prop:modern-counting}. For $n = 2k$ even, the quotient trees are modern blossoming trees rooted at a synchronized node, with the buds at the top. For a root decomposition, as the blossoming tree involved is modern, we see that all subtrees except the rightmost one must be those counted by the series $B_\mm$, that is, those with a bud next to the root in \cw-order. With symbolic method, by the definition of $C_\mm$ and~\Cref{eq:modern-cm}, the number of self-dual modern intervals is
 \[
 [z^k] A_\mm \frac1{1 - B_\mm} = [z^k]C_\mm = \frac{2^{k-1}}{k+1} \binom{2k}{k}.
 \]
 
 For $n = 2k+1$ odd, the center of symmetry is an edge $e$. For $e$ to not be a non-modern edge, there must be a bud that follows $e$ in \cw-order at both ends, meaning that the quotient trees are exactly those counted by $B_\mm$. The number of self-dual modern intervals is thus
 \[
 [z^{k}]B_\mm = \frac{2^k}{k+1} \binom{2k}{k}.
 \]
 We observe that this is exactly twice the number for $n = 2k$.

 Regarding new intervals, as the rise operator preserves the property of being-dual, the number of self-dual new intervals of size $n$ equals the numbers of self-dual modern intervals of size $n-1$.
\smallskip

\noindent \textbf{Self-dual modern and synchronized intervals.} In this case, the encoding of related blossoming tree by a rooted plane tree illustrated in~\Cref{fig:modern_synchronized} commutes with the half-turn rotation, i.e., self-dual blossoming trees correspond to plane trees with a marked edge and invariant by a half-turn rotation. Such trees with $2k + 1$ edges are clearly counted by the Catalan numbers $\mathrm{Cat}_k$, as the quotient trees are plane trees with $k$ edges and an additional dangling half-edge as the root. The trees with size $n = 2k$ is clearly $0$ by the case of synchronized intervals.
\smallskip

\noindent \textbf{Self-dual Kreweras and infinitely modern intervals.} 
Via the bijection with non-crossing trees, self-dual Kreweras intervals correspond to non-crossing trees that are fixed by left-right mirror, this is indeed equivalent to the fact that the associated meandering tree is stable by half-turn. It is then an easy exercise to count these non-crossing trees of size $n$. For $n=2k$, such a tree is obtained as a non-crossing tree $\tau$ of size $k$ concatenated with its left-right mirror $\overline{\tau}$, the right end of $\tau$ being merged with the left end of $\overline{\tau}$. Hence, the number of self-dual Kreweras intervals of size $2k$ is equal to the number of non-crossing trees of size~$k$. 
For $n=2k+1$, a non-crossing tree of size $n$ fixed by left-right mirror is obtained as follows. Take a pair $(\tau_1,\tau_2)$ of non-crossing trees whose sizes add up to $k$, concatenate $\tau_1$ and $\tau_2$ into a non-crossing tree $\tau$, with $v$ the vertex resulting from merging the right end of $\tau_1$ with the left end of $\tau_2$. Then concatenate $\tau$ with its left-right mirror $\overline{\tau}$ without merging the ends, and add an edge from $v$ to the corresponding vertex $\overline{v}$ in $\overline{\tau}$. From this construction, the number of self-dual Kreweras intervals of size $2k+1$ is $[z^k](1+R(z))^2$, with $R(z)=z(1+R(z))^3$, which gives the formula. 

 Finally, by Remark~\ref{rem:inf_modern_mirror}, the infinitely modern blossoming trees are the reflection of the Kreweras blossoming trees. 
 Note that a blossoming tree is fixed by a half-turn if and only if its reflection is also fixed by a half-turn. Hence, the induced bijection between Kreweras intervals and infinitely modern intervals preserves the property of being self-dual, so that in each size the numbers of self-dual intervals are the same in both families. 
\end{proof}

\begin{rem}
 In the context of non-crossing partitions, with the notation $\overline{\pi}$ for the Kreweras complement of a partition, there is a natural duality, which maps an interval $(\pi,\pi')$ to $(\overline{\pi'},\overline{\pi})$. Via the bijection between $NC_n$ and $\bintree_n$ this duality is consistent with the Tamari duality, as follows from the property illustrated in~\Cref{fig:complement_bij}. 
 The formula in~\Cref{table:self_dual} for self-dual Kreweras intervals has been previously obtained, see~\href{https://oeis.org/A047749}{OEIS A047749}. 
\end{rem}

\section{Final remarks}
\label{sec:final}

\subsection{Dyck walks}\label{sec:dyck}

The Tamari lattice is often presented as a poset on Dyck walks. This point of view has certain advantages, for instance to formulate recursive decompositions~\cite{bousquet2023intervals,bousquet2013representation,bousquet2011number,fang17}. 
Duality on the other hand is not as obvious as left-right symmetry of trees.

The bijection of intervals with meandering trees is easy to characterize on Dyck walks, via the underlying correspondence to binary trees (a binary tree $T=(T_1,T_2)$ is mapped to the Dyck walk $D=D_1\!\nearrow\!D_2\!\searrow$, with $D_1,D_2$ the Dyck walks associated inductively to $T_1,T_2$).  
Recall the contact-vector $C(W)=(c_0,\ldots, c_n)$ and descent-vector $D(W)=(d_0, \ldots, d_n)$ attached to a Dyck walk $W$ of length $2n$. That is, $c_i$ is the number of contacts after the $i$th up step of $W$ for $i>0$, while $c_0$ is the number of contacts of $W$. Thus $c_i=0$ if and only if the $i$-th up step is followed by a down step (this happens in particular when $i=n$). On the other hand, $d_i$ is the number of down steps after the $i$th up step of $W$, with $d_0 = 0$ by convention. For $W$ a Dyck walk associated to a binary tree $T$, it is easy to check that the degree-vector of $T$ is $C(W)$, while the degree vector of $\mir(T)$ is $D(W)$ read from right to left. This leads to the following, see~\Cref{fig:dyck}.

\begin{prop}\label{prop:dyck}
In the Dyck walk formulation, a Tamari interval $(W, W')$ corresponds to the meandering tree $M\in \mtree_n$ such that $C(W')$ is the degree-vector of the upper diagram-drawing, while $D(W)$, read from right to left, is the degree-vector of the lower diagram-drawing rotated by a half-turn.   
\end{prop}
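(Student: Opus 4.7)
The plan is to combine the description of $\phi$ recalled in \Cref{sec:phi} with two identities relating the contact/descent vectors of a Dyck walk $W$ to the degree-vectors of its associated binary tree $T$. By \Cref{defn:trees-to-bilabel} and the discussion in \Cref{sec:phi}, for $X = (T, T') \in \tamint_n$ the upper diagram-drawing of $\phi(X)$ is $\hat{T'}$, whose degree-vector is $\dne(T')$, while the lower diagram-drawing of $\phi(X)$ rotated by a half-turn is $\hat{\mir(T)}$, whose degree-vector is $\dne(\mir(T))$. Since a meandering tree is determined by the degree-vectors of its upper and lower halves (via the reconstruction of smooth drawings from degree-vectors in \Cref{defn:tree-diagram-drawing}), the proposition reduces to the two identities
\begin{equation*}
 C(W) = \dne(T) \qquad \text{and} \qquad \big(D(W)_n, D(W)_{n-1}, \ldots, D(W)_0\big) = \dne(\mir(T))
\end{equation*}
for any binary tree $T$ with associated Dyck walk $W$.

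For the first identity, I would argue by induction on $n$ using the recursion $T = (T_L, T_R) \leftrightarrow W = W_L \nearrow W_R \searrow$ together with the recursive description of the smooth drawing. The key observation is that adjoining the outer arc of $T$ (from leaf $0$ to leaf $n$) on top of the smooth drawings of $T_L$ and $T_R$ (suitably shifted) bumps the right-degree of the black point at $x = 0$ by exactly one and leaves all other right-degrees unchanged; this matches the recursion of $C(W)$, where the middle up-step augments $c_0$ by one while the inner contact-counts of $W_L$ and $W_R$ are preserved (up to relabelling). Alternatively, one may simply invoke the classical identification of $W$ as the contour walk of the plane tree with degree-vector $\dne(T)$ (via the \L{}ukasiewicz walk $(d_i - 1)_{0 \leq i \leq n}$), under which $c_i$ is visibly the number of children of the $i$-th vertex in depth-first order.

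For the second identity, the same inductive strategy applies: $D(W)_i$ equals the length of the maximal right-branch of $T$ ending at the $i$-th leaf, because the trailing run of down-steps immediately after the $i$-th up-step of $W$ precisely encodes the chain of consecutive right-child ancestors of that leaf. Reversing the resulting vector interchanges right- and left-branches and reverses the order of leaves, which is exactly how $\mir$ acts on degree-vectors, giving $\dne(\mir(T))$. The main bookkeeping obstacle, and the only real delicacy in the proof, is the treatment of the boundary indices: the convention $D(W)_0 = 0$ has to be reconciled with the fact that leaf $0$ of $T$ is never a right child, and similarly $c_n = 0$ must match the empty left-branch at the rightmost leaf. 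Both match tautologically, so once these edge cases are dispatched the inductive step is routine, and combining the two identities with the reduction above yields the proposition.
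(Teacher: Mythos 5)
Your proposal is correct and follows essentially the same route as the paper. The paper's justification for this proposition is contained in the preceding paragraph, where it is observed (and stated as ``easy to check'') that for a Dyck walk $W$ associated to a binary tree $T$ one has $C(W)=\dne(T)$ and that $D(W)$ read right-to-left equals $\dne(\mir(T))$; combining this with the definition of $\treestobil$ (upper diagram-drawing $=\hat{T'}$, lower diagram-drawing $=$ half-turn of $\widehat{\mir(T)}$) gives the proposition. You reduce to exactly these two identities and then supply the inductive verification (or the classical \L ukasiewicz/contour-walk interpretation) that the paper leaves implicit, so this is the same argument, just spelled out.
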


\begin{figure}[!ht]
\begin{center}
\includegraphics[width=0.7\textwidth]{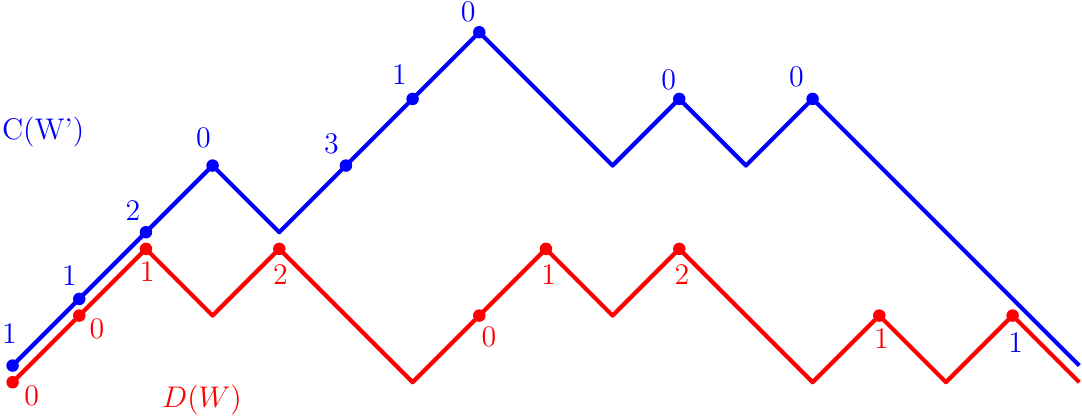}
\end{center}
\caption{\label{fig:dyck}
Formulated on Dyck walks, the interval corresponding to the meandering tree in ~\Cref{fig:lengths}.
}
\end{figure}

The recursive decomposition of intervals is then easily translated in our model (this can also be done starting with interval posets as in~\cite{ChatelPons15}). Given a meandering tree, consider the upper arc $(0,i+1/2)$ with $i$ maximal; let $(i+1/2,j)$ be the corresponding lower arc. Deleting this arc, we get a meandering tree on $[0,i]$, a meandering tree on $[i+1,n]$ and a point $j$ such that no lower arc encloses it. Conversely, such data corresponds to a meandering tree of size $n$.

\subsection{Limitations}

We are currently unable to use our bijection to count $m$-Tamari intervals, which are synchronized intervals with canopy of the form \[1,0^m,1,0^m,\ldots,1,0^m.\] Indeed the order on canopy entries is lost in the bijection.

 By~\Cref{lem:bi-lengths} we can count Tamari intervals with respect to the unordered bi-degree profile. From~\Cref{prop:dyck}, when formulated on pairs of Dyck walks, this gives enumeration with respect to the unordered joint profile of the descent-vector for lower walk and level-vector for upper walk. This, however, does not lead to counting labeled intervals from~\cite{bousquet2013representation}, for which we would need to have control on the ascent-vector of the upper walk, or equivalently on the descent-vector of the upper walk via the involution in~\cite{Pons19}. 

\def\refl{\mathrm{refl}}
\def\invint{\rho}

\subsection{A new involution on Tamari intervals}\label{sec:invol} 
Previously known involutions on Tamari intervals are the classical duality involution, and more recently the involution in~\cite{Pons19}. The mirror of blossoming trees allows us to provide a new involution. 

More formally, we define the \tdef{reflection} of a blossoming tree $B \in \blossoming_n$, denoted by $\refl(B)$, to be the mirror image of $B$, and define $\invint = \blomtoint \circ \refl \circ \inttoblom$, which is clearly an involution on Tamari intervals. 

\begin{figure}
\begin{center}
\includegraphics[width=10cm]{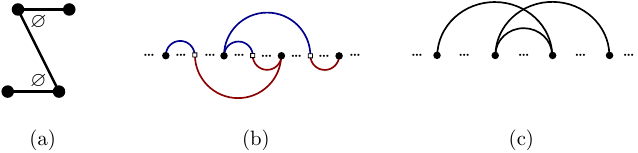}
\end{center}
\caption{(a) Forbidden pattern in blossoming trees that are the reflection of modern blossoming trees. (b) The corresponding configuration in the meandering representation of the blossoming tree. (c) The corresponding configuration in the interval-poset tree.}
\label{fig:mirr_modern}
\end{figure}

\begin{prop}
The involution $\invint$ commutes with the duality involution $\mir$. It preserves the property of being synchronized, matches the infinitely modern intervals with the Kreweras intervals, 
and matches the modern and synchronized intervals with the trivial intervals. 

Furthermore, it matches the modern intervals with the intervals whose interval-poset tree has no triple of arcs as in~\Cref{fig:local_op_int_poset_tree}(c), or equivalently those whose interval-poset has no triple $x,y,z$ of elements such that $\mathrm{Int}(x)=\mathrm{Int}(y)\cap\mathrm{Int}(z)$.
\end{prop}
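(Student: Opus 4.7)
My plan is to handle the five assertions in turn, with the first four reducing to facts already established about $\inttoblom$ and about the involutions $\dual$ and $\refl$ of blossoming trees, and only the last requiring substantial new bookkeeping.

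First, for commutation with Tamari duality, I would note that $\mir$ on intervals corresponds under $\inttoblom$ to the color switch $\dual$ on bicolored blossoming trees (\Cref{lem:duality-commutation}), while $\refl$ corresponds by definition to the planar reflection. Since color switching and planar reflection act on independent data of a bicolored blossoming tree, they commute pointwise on $\blossoming_n$; conjugating by $\inttoblom$ then gives $\invint\circ\mir=\mir\circ\invint$.

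For the three other ``easy'' assertions: \Cref{coro:sync} characterizes synchronized intervals as those whose blossoming tree has no node of type $\canoz$, equivalently those at which the two buds at every node are consecutive in the cyclic order around it. This property is manifestly invariant under reflection, so $\invint$ stabilizes synchronized intervals. The matching between infinitely modern and Kreweras intervals is immediate from~\Cref{rem:inf_modern_mirror} together with~\Cref{lem:infinitely-modern} and the Kreweras-tree characterization; and the matching between modern-synchronized and trivial intervals is immediate from~\Cref{rk:trivial_mirror} together with~\Cref{rem:trivial}.

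For the modern-interval characterization, the main step, I set $J=\invint(I)$ and $B=\inttoblom(I)$, so that $\inttoblom(J)=\refl(B)$. Then $I$ is modern iff $B$ is modern (\Cref{lem:modern}), iff $\refl(B)$ has no ``reversed $Z$-pattern'', that is, no plain edge $e=(u,v)$ for which both ccw-successors at $u$ and at $v$ are plain edges. To transfer this to a condition on $J$, I would first derive a ccw-analog of~\Cref{lem:adjacent} by tracking the two buds added at a black point $b_u$ in the meandering representation and their matchings with the legs at $b_u-1/2$ and $b_u+1/2$: reflecting the blossoming tree swaps cw and ccw around every node, so the ccw-successor of $e$ at $u$ is a bud iff $b_u$ is adjacent to $w_e$ on the side opposite to that of~\Cref{lem:adjacent}. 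Combining this at both endpoints $u$ and $v$ of the central edge yields exactly the meandering-tree pattern of~\Cref{fig:mirr_modern}(b). Next, through the local correspondence of~\Cref{fig:local_op_int_poset_tree} relating the meandering tree to the interval-poset tree, this meandering pattern translates into the interval-poset-tree pattern of~\Cref{fig:mirr_modern}(c). Finally, since the arc labeled $t$ in the interval-poset tree spans exactly $\mathrm{Int}_t=[a_t,b_t]$, the triple of arcs in~\Cref{fig:mirr_modern}(c), a central arc sharing its left endpoint with one outer arc and its right endpoint with the other, encodes precisely the identity $\mathrm{Int}(x)=\mathrm{Int}(y)\cap\mathrm{Int}(z)$ with $a_x=\max(a_y,a_z)$ and $b_x=\min(b_y,b_z)$.

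The main obstacle is the ccw-analog of~\Cref{lem:adjacent} and the resulting identification of the meandering-tree pattern in~\Cref{fig:mirr_modern}(b). The closure construction treats the two buds at each black point asymmetrically (one matched with a leg to the left, one to the right), so reflection must be followed carefully through this construction; once the asymmetry is properly accounted for, however, the remaining steps for the last assertion are direct applications of the local correspondences.
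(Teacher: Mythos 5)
Your proposal is correct and follows essentially the same route as the paper's proof: the first four assertions reduce to \Cref{lem:duality-commutation}, \Cref{coro:sync}, \Cref{rem:inf_modern_mirror}, and \Cref{rk:trivial_mirror} exactly as the paper does, and the last assertion is handled by transferring the ``reflected modern'' condition on blossoming trees to the meandering representation and then to the interval-poset tree via \Cref{fig:local_op_int_poset_tree}. The paper asserts the meandering-tree pattern of~\Cref{fig:mirr_modern}(b) directly from the absence of flawed pairs, whereas you propose to derive it via a ccw-analogue of~\Cref{lem:adjacent}; this is a slightly more explicit unpacking of the same step, not a different approach.
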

\begin{proof}
The first statement follows from the fact that the operations of reflection and of color-switch on bicolored blossoming trees commute. 
Clearly the reflection does not affect the property that the two buds at each node are grouped. It thus preserves the property of being synchronized. 
From~\Cref{rem:inf_modern_mirror} the infinitely modern intervals are matched with the Kreweras intervals. 
From~\Cref{rk:trivial_mirror} the modern synchronized intervals are matched with the trivial intervals. 

Finally, from~\Cref{lem:modern}, modern intervals are matched by $\invint$ with intervals whose blossoming trees have no plain edge followed by a plain edge at both ends in \ccw-order. In the meandering representation, due to the absence of flawed pairs, the configuration for the three plain edges is as in~\Cref{fig:mirr_modern}(b). Thus, the interval-poset tree, obtained by performing the operation of~\Cref{fig:local_op_int_poset_tree} from right to left, has to avoid the pattern in~\Cref{fig:mirr_modern}(c).
\end{proof}

\subsection{Self-dual intervals and $q$-analogues} 
 Regarding the counting formulas in~\Cref{table:self_dual}, it has been observed by Vic Reiner (personal communication) 
 that the number of self-dual intervals coincides with a simple $q$-analogue of the formula for all intervals taken at $q=-1$. We have checked that the same holds for synchronized intervals. 
 It would be nice to have a natural explanation of this fact. This may come from a combinatorial analysis of blossoming trees.
 
 \begin{figure}
 \begin{center}
 \raisebox{1.44cm}{\includegraphics[width=6cm]{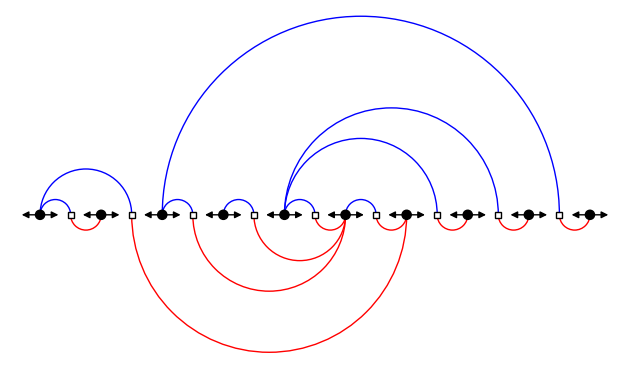}}\hspace{.5cm} \raisebox{.05cm}{\includegraphics[width=6cm]{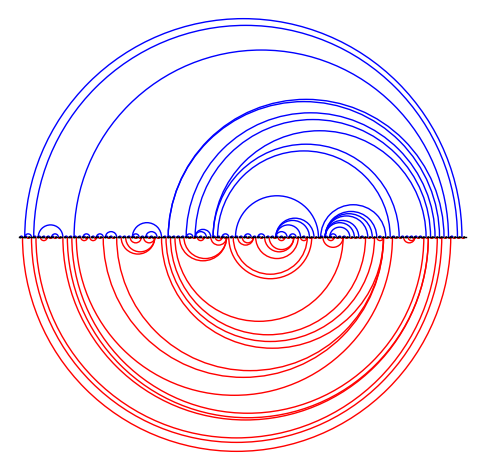}} \includegraphics[width=6cm]{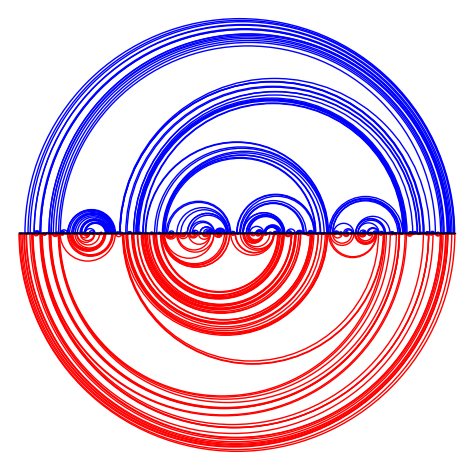}\hspace{.5cm}\includegraphics[width=6cm]{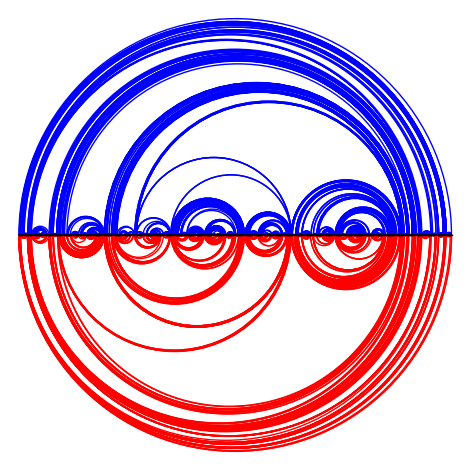}
 \end{center}
 \caption{Random bicolored blossoming trees in their meandering representation, with $10$, $100$, $1000$, $10000$ nodes respectively. In large size, they look similar to the smooth drawing of the corresponding Tamari interval.}
 \label{fig:simulations}
 \end{figure}
 
 \subsection{Implementation} 
An implementation of the bijection is available at the link \url{https://github.com/fwjmath/assorted-tamari/blob/master/blossoming.py}\\
which also includes a random generator for Tamari intervals. Some random samples are shown in~\Cref{fig:simulations}.

\medskip
\medskip

\noindent{\emph{Acknowledgement.} The authors are grateful to Fr\'ed\'eric Chapoton, Vincent Pilaud, Vic Reiner, and Gilles Schaeffer for interesting discussions. The first author is partially supported by ANR-21-CE48-0007 (IsOMA) and ANR-21-CE48-0020 (PAGCAP). The second and third authors are partially supported by ANR-19-CE48-0011 (COMBIN\'E).

\bibliographystyle{alpha}
\bibliography{biblio}

\end{document}